\documentclass[preprint,review,10pt]{elsarticle}
\usepackage{amsmath,amssymb,amsfonts,amsthm}
\usepackage{hyperref}
\usepackage{graphicx}
\usepackage{mathrsfs}

\usepackage{multirow}

\usepackage{color}
\usepackage{cases}

\newtheorem{assumption}{Assumption}

\newtheorem{lemma}{Lemma}

\newtheorem{theorem}{Theorem}

\begin{document}

\begin{frontmatter}
\title{
Finite element method for singularly perturbed problems with two parameters on a Bakhvalov-type mesh in 2D
\tnoteref{funding} }

\tnotetext[funding]{
This research is supported by National Science Foundation of China (11771257,11601251).
}

\author[label1] {Jin Zhang\corref{cor1}}
\author[label1] {Yanhui Lv \fnref{cor2}}
\cortext[cor1] {Corresponding author: jinzhangalex@hotmail.com }
\fntext[cor2] {Email: yanhuilv@hotmail.com }
\address[label1]{School of Mathematics and Statistics, Shandong Normal University,
Jinan 250014, China}

\begin{abstract}

For a singularly perturbed elliptic model problem with two small parameters, we analyze  finite element methods of any order on a Bakhvalov-type mesh.    For convergence analysis, we construct a new interpolation by using the characteristics of layers. Besides, a more subtle analysis of the mesh scale near  the exponential layer  is carried out. Based on the interpolation and new analysis of the mesh scale, we prove the optimal convergence order.

\end{abstract}

\begin{keyword}
Singular perturbation\sep Convection--diffusion equation \sep Finite element method \sep Bakhvalov-type mesh \sep Two parameters
\end{keyword}

\end{frontmatter}
\section{Introduction}
In this paper, we reconsider the singularly perturbed elliptic problem in \cite{Tefa1Roos2:2008-elliptic}
\begin{align}
&Lu:=-\varepsilon_1\Delta u+\varepsilon_2b(x)u_x+c(x)u=f(x,y)\quad \text{in $\Omega:=(0,1)\times(0,1)$},\label{eq:I-condition-1}\\
&u\vert_{\partial\Omega}=0,\nonumber
\end{align}
with
\begin{align}
b(x)\ge \lambda>0,\quad c(x)\ge \beta>0\quad \text{for $x\in [0,1]$},\label{eq:I-condition-2}\\
c(x)-\frac{1}{2}\varepsilon_2b'(x)\ge \gamma>0,\label{eq:I-condition-3}\\
f(0,0)=f(0,1)=f(1,1)=f(1,0)=0\label{eq:I-condition-4},
\end{align}
where $b$, $c$, and $f$ are sufficiently smooth functions and $\lambda$, $\beta$, $\gamma$ are constants.  
Here we only discuss the case of $0<\varepsilon_1,\varepsilon_2\ll 1$ (see \cite{OMalley:1967-Two-parameter}), and if you are interested in the case of $\varepsilon_2=0$ and $\varepsilon_2=1$, you can refer to \cite{Tefa1Roos2:2008-elliptic} and its references. For the sake of analysis, we use $b(x)$ and $c(x)$ in our problem instead of $b(x,y)$ and $c(x,y)$, because that doesn't affect the properties of the true solution(see\cite{Teo1Brd2Rra3Zar4:2018-SDFEM}), and this is also the case in the equations of \cite{Tefa1Roos2:2007-elliptic}, \cite{Tefa1Roos2:2008-elliptic}, and \cite{Brd1Zar2Teo3:2016-singular}.


 Moreover, the conditions \eqref{eq:I-condition-2} and \eqref{eq:I-condition-3} ensure that  there exists a unique solution $u\in C^{3,\alpha_0}(\Omega)$ with $\alpha_0\in(0,1)$,  which is characterized by exponential layers at $x = 0$ and $x = 1$,  parabolic layers at  $y=0$ and $y=1$, and corner layers at four corners of the domain. 
For the treatment of boundary layer,  researchers usually use a class of special meshes which are very fine on the layer region of the solution. Compared to the quasi-uniform mesh, this kind of mesh can capture the change of layers better.  Among those the most representative ones are Shishkin mesh \cite{shishkin1990grid} and Bakhvalov mesh \cite{Bakhvalov:1969-Towards}, and experiments show that the convergence order of numerical solutions on Bakhvalov mesh is better.

In fact, up to now there are few articles about  finite element method on Bakhvalov mesh, because directly applying the Lagrangian interpolation commonly used in numerical analysis to the Bakhvalov mesh is not workable.  
In \cite{Roo1Han2:2006-Error}, Roos clearly stated the difficulty of convergence analysis on Bakhvalov-type mesh in 1D, and obtained the optimal convergence order  by using quasi-interpolation.  Brdar and Zarin analyzed a singularly perturbed   problem with two-parameter in 1D  using the same method in \cite{Brda1Zari2:2016-singularly}.  However, Roos'  method is powerless in the face of higher-order finite element methods or higher-dimensional problems. Recently,  Zhang and Liu  proposed a new interpolation which is much simpler to construct and analyze for the one-dimensional one-parameter problem in \cite{Zha1Liu2:2020-Opt}, and can be directly extended to higher-order cases. But, when the idea is applied to two-dimension, the boundary conditions need to be properly corrected to meet the homogeneous Dirichlet boundary conditions. Standard analysis is not successful for these corrections (see \cite{Zhan1Liu2:2020-Convergence-arXiv-2}).
%

In this paper, we define a new interpolation according to the characteristics of the layers for convergence analysis of optimal order. In view of the difficulties brought   by the corrections on the boundary,  we make use of a new estimation of the mesh scale near the Bakhvalov-type transition point and take the structure of the mesh near the boundary into account. Furthermore, we use different techniques to handle error estimations in different subdomains and then obtain the optimal convergence order on the Bakhvalov-type mesh. 

The rest of this article is organized as follows. In the section 2, the prior estimation of the solution of the continuous problem is given. In the section 3, we will construct the Bakhvalov-type mesh, and give some mesh properties, and finally establish the finite element method. The new interpolation will appear in the section 4, and we will also prove some results of Lagrangian interpolation error. The convergence analysis is carried out in section 5. Finally, numerical experiments are given in section 6 to verify our conclusion.

Throughout the paper, we shall use $C$ to denote a generic positive constant independent of  $\varepsilon_1$, $\varepsilon_2$ and  $N$, which can take different values at different places. For any domain $D$ of $\Omega$,  we use the standard notation for Banach spaces $L^p(D)$, Sobolev spaces $W^{k, p}(D)$, $H^k(D)=W^{k, 2}(D)$. Define $\Vert \cdot\Vert_{\infty,D}$ to be  $\Vert\cdot\Vert_{L^{\infty}(D)}$, $\Vert \cdot\Vert_D$ to be $\Vert \cdot\Vert_{L^2(D)}$, and $\vert\cdot\vert_{D}$ to be the seminorms of $\Vert\cdot\Vert_{H^1(D)}$; The scalar product in $L^2(D)$ is denoted with $(\cdot, \cdot)_D$. And we will drop the subscript $D$ from the notation for simplicity when $D=\Omega.$

 
\section{A priori estimates of solution of the continuous problem}
Compared with the parabolic layer, the exponential layer changes more dramatically. So in order to describe the exponential layers at $x=0$ and $x=1$, we introduce the characteristic equation as following 
$$-\varepsilon_1g^2(x)+\varepsilon_2b(x)g(x)+c(x)=0.$$
This equation defines two continuous functions $g_0, g_1:[0, 1]\rightarrow \mathbb{R}$ with $g_0(x)<0$, $g_1(x)>0$. Let
$$\mu_0=-\max\limits_{0\leq x\leq1} g_0(x), \quad \mu_1=\min\limits_{0\leq x\leq1} g_1(x).$$
For the sake of simplicity, we take 
$$\mu_0=\frac{-\varepsilon_2b_*+\sqrt{\varepsilon_2^2b_*^2+4\varepsilon_1\beta}}{2\varepsilon_1}, \quad 
\mu_1=\frac{\varepsilon_2\lambda+\sqrt{\varepsilon_2^2\lambda^2+4\varepsilon_1\beta}}{2\varepsilon_1},$$ 
with $b_*=\max\limits_{0\leq x\leq 1}b(x)$,  which is the same as \cite[(6)]{Tefa1Roos2:2008-elliptic}.
Then we give some properties of $\mu_0$  and $\mu_1$ (see \cite{Tefa1Roos2:2007-elliptic}): 
\begin{align}
&\mu_0\leq\mu_1,\qquad \qquad \max\{\mu_0^{-1}, \varepsilon_1\mu_1\}\leq C(\varepsilon_2+\varepsilon_1^{\frac{1}{2}}),\label{cos:conclusions-1}\\
&\varepsilon_2\mu_0\leq \lambda^{-1}\Vert c\Vert_{\infty},\qquad\qquad\varepsilon_2(\varepsilon_1\mu_1)^{-\frac{1}{2}}
\leq C\varepsilon_2^{\frac{1}{2}}\label{cos:conclusions-2}.
\end{align}
These properties will play an important role in the subsequent analysis.

In this paper we assume that
\begin{equation}\label{eq:mu01-assume}
\mu_1^{-1}\leq\mu_0^{-1}\leq N^{-1},
\end{equation}
and it is worth noting that there is no such limitation in practice.
By direct computations of \eqref{eq:mu01-assume}, we can obtain
\begin{equation}\label{eq:varepsilon12-condition}
\varepsilon_1\leq c_0N^{-2},\qquad \varepsilon_2\leq c_1N^{-1},
\end{equation}
with  $c_0=\beta$ and $c_1=\frac{\beta}{b_*}$.

On the basis of the prior estimation of the solution  given in \cite{Tefa1Roos2:2007-elliptic}, we make the following assumption about the decomposition of the solution and the prior estimation of each component. In the subsequent analysis, $k$ is a fixed positive integer and $k\geq1$.
%

\begin{assumption}\label{eq:priori estimates of u}
Let there be given elliptic problem \eqref{eq:I-condition-1} on the unit square $\bar\Omega$ satisfying conditions \eqref{eq:I-condition-2}-\eqref{eq:I-condition-4}, and let $p\in(0,1)$ and $k_0\in(0,\frac{1}{2})$ be arbitrary. Assume that
\begin{equation*}\label{eq:bk-condition}
2\Vert b'\Vert_{{\infty}}\varepsilon_2\leq k_0(1-p)\beta.
\end{equation*}
Furthermore, let $\delta$ be a positive constant satisfying
\begin{equation*}\label{eq:delta-condition}
\delta^2\leq \frac{(1-p)\beta}{2}.
\end{equation*}
Then the solution $u$ of  problem \eqref{eq:I-condition-1} can be decomposed as 
\begin{equation*}\label{eq:decompose of u}
u=S+E_{10}+E_{11}+E_{20}+E_{21}+E_{31}+E_{32}+E_{33}+E_{34},
\end{equation*}
where for all $(x,y)\in \bar\Omega$ and $0\leq i+j\leq k+1,$ the regular part $S$ satisfies
\begin{equation*}\label{eq:S-property}
\left\vert \frac{\partial^{i+j}S}{\partial x^i\partial y^j}\right\vert\leq C,
\end{equation*}
the exponential and parabolic layer components satisfy
\begin{align}
&\left\vert \frac{\partial^{i+j}E_{10}}{\partial x^i\partial y^j}\right\vert\leq C\mu_0^ie^{-p\mu_0x},\label{eq:E10-property}\\
&\left\vert \frac{\partial^{i+j}E_{11}}{\partial x^i\partial y^j}\right\vert\leq C\mu_1^ie^{-p\mu_1(1-x)},\label{eq:E11-property}\\
&\left\vert \frac{\partial^{i+j}E_{20}}{\partial x^i\partial y^j}\right\vert\leq C\varepsilon_1^{-\frac{j}{2}}e^{-\frac{\delta y}{\sqrt{\varepsilon_1}}},\nonumber\\
&\left\vert \frac{\partial^{i+j}E_{21}}{\partial x^i\partial y^j}\right\vert\leq C\varepsilon_1^{-\frac{j}{2}}e^{-\frac{\delta (1-y)}{\sqrt{\varepsilon_1}}},\nonumber
\end{align}
while the corner layer components satisfy the following estimates
\begin{align}
&\left\vert \frac{\partial^{i+j}E_{31}}{\partial x^i\partial y^j}\right\vert\leq C\varepsilon_1^{-\frac{j}{2}}\mu_0^ie^{-p\mu_0x}e^{-\frac{\delta y}{\sqrt{\varepsilon_1}}},\label{eq:E31-property}\\
&\left\vert \frac{\partial^{i+j}E_{32}}{\partial x^i\partial y^j}\right\vert\leq C\varepsilon_1^{-\frac{j}{2}}\mu_1^ie^{-p\mu_1(1-x)}e^{-\frac{\delta y}{\sqrt{\varepsilon_1}}},\label{eq:E32-property}\\
&\left\vert \frac{\partial^{i+j}E_{33}}{\partial x^i\partial y^j}\right\vert\leq C\varepsilon_1^{-\frac{j}{2}}\mu_1^ie^{-p\mu_1(1-x)}e^{-\frac{\delta (1-y)}{\sqrt{\varepsilon_1}}},\nonumber\\
&\left\vert \frac{\partial^{i+j}E_{34}}{\partial x^i\partial y^j}\right\vert\leq C\varepsilon_1^{-\frac{j}{2}}\mu_0^ie^{-p\mu_0x}e^{-\frac{\delta (1-y)}{\sqrt{\varepsilon_1}}}.\nonumber
\end{align}
\end{assumption}
\section{Bakhvalov-type mesh and finite element method}

\subsection{Bakhvalov-type mesh}
Let $N\in \mathbb{N}, N\ge 8$, can be divisible 4. Define
\begin{align*}
\sigma_{x,i}:=\frac{\tau}{p\mu_j}\ln \mu_j\; i=0, 1,\quad \text{and}\quad
\sigma_y:=\frac{\tau}{\delta}\sqrt{\varepsilon_1}\ln \frac{1}{\sqrt{\varepsilon_1}},
\end{align*}
where $\tau\geq k+1$ is a user-chosen parameter and $p\in(0,1)$ is the parameter from Assumption \ref{eq:priori estimates of u}. On $x$--axis, we set $\sigma_{x,0}$ and $1-\sigma_{x,1}$ as  transition points, where  the mesh changes from fine to coarse and viceversa. On $y$-- axis, we set $\sigma_y$ and $1-\sigma_y$ as transition points.
For technical reasons, we also assume 
\begin{align}\label{eq:transition-points}
\sigma_{x,i}\leq\frac{1}{4}\ \  i=0, 1,
\quad\text{and}\quad\sigma_y\leq\frac{1}{4}.
\end{align}
%

Now we define a Bakhvalov-type mesh for problem \eqref{eq:I-condition-1}, which is introduced in \cite{Roo1Han2:2006-Error}. The mesh points $x_i$, $i=0, 1, \cdots, N$, are defined by    
\begin{equation}\label{eq:mesh-x-points}
x_i=\begin{cases}
 \frac{\tau}{p\mu_0}\varphi_0(t_i)\qquad\qquad i=0, 1, \cdots, \frac{N}{4},\\
 \sigma_{x,0}+2(t_i-\frac{1}{4})(1-\sigma_{x,0}-\sigma_{x,1})\qquad i=\frac{N}{4},\frac{N}{4}+1,\cdots,\frac{3N}{4},\\
 1-\frac{\tau}{p\mu_1}\varphi_1(t_i)\qquad\qquad i=\frac{3N}{4}, \frac{3N}{4}+1, \cdots, N,
\end{cases}
\end{equation}
where $t_i=\frac{i}{N}$, $i=0, 1, \cdots, N$ and
$$
\varphi_0(t)=-\ln(1-4(1-\mu_0^{-1})t),\quad
\varphi_1(t)=-\ln(1-4(1-\mu_1^{-1})(1-t)).
$$
It can be seen from \eqref{eq:mesh-x-points} that the mesh is graded on $[x_0,\sigma_{x,0}]$ and $[1-\sigma_{x,1},x_N]$, and the mesh is uniform on $[\sigma_{x,0},1-\sigma_{x,1}]$.
The mesh points $y_j$, $j=0, 1, \cdots, N$, are defined by
\begin{equation}\label{eq:mesh-y-points}
y_j=\begin{cases}
 \frac{\tau}{\delta}\sqrt{\varepsilon_1}\phi_0(t_j)\qquad\qquad j=0, 1, \cdots, \frac{N}{4},\\
 \sigma_y+2(t_j-\frac{1}{4})(1-2\sigma_y)\qquad j=\frac{N}{4},\frac{N}{4}+1,\cdots,\frac{3N}{4},\\
 1-\frac{\tau}{\delta}\sqrt{\varepsilon_1}\phi_1(t_j)\qquad\qquad j=\frac{3N}{4}, \frac{3N}{4}+1, \cdots, N,
\end{cases}
\end{equation}
where $t_j=\frac{j}{N}$, $j=0, 1, \cdots, N$ and
$$
\phi_0(t)=-\ln(1-4(1-\sqrt{\varepsilon_1})t),\quad
\phi_1(t)=-\ln(1-4(1-\sqrt{\varepsilon_1})(1-t)).
$$
Similar to the $x$-direction mesh layout, from \eqref{eq:mesh-y-points} we could see that the mesh is graded on $[y_0,\sigma_y]$ and $[1-\sigma_y,y_N]$, and the mesh is uniform on $[\sigma_y,1-\sigma_y].$ With  mesh points $\{(x_i,y_j)\}$,  we obtain a tensor-product rectangular mesh $\mathcal{T}$.


Set $h_{x,i}:=x_{i+1}-x_{i}$ and $h_{y,j}:=y_{j+1}-y_{j}$  are the mesh sizes in the $x$ and $y$ directions,  respectively. Also set $\mathscr{T}_{i,j}=[x_i,x_{i+1}]\times[y_j,y_{j+1}]$ be any mesh rectangle in $\mathcal{T}$.

According to  \cite[Lemmas 2 and 3]{Zha1Liu2:2020-Opt}, we have the following  three  lemmas.
\begin{lemma}\label{eq:mesh-x-sizes}
From \eqref{eq:mesh-x-points}, we can get the mesh size in the $x$ direction as follows
\begin{align*}
&C\mu_0^{-1}N^{-1}\leq h_{x,0}\leq h_{x,1}\leq \cdots\leq h_{x,\frac{N}{4}-2},\\
&\frac{\tau}{4p}\mu_0^{-1}\leq h_{x,\frac{N}{4}-2}\leq\frac{\tau}{p}\mu_0^{-1},\\
&\frac{\tau}{2p}\mu_0^{-1}\leq h_{x,\frac{N}{4}-1}\leq \frac{4\tau}{p}N^{-1},\\
&N^{-1}\leq h_{x,i}\leq 2N^{-1}\qquad \frac{N}{4}\leq i\leq \frac{3N}{4}-1,\\
&\frac{\tau}{2p}\mu_1^{-1}\leq h_{x,\frac{3N}{4}}\leq \frac{4\tau}{p}N^{-1},\\
&\frac{\tau}{4p}\mu_1^{-1}\leq h_{x,\frac{3N}{4}+1}\leq\frac{\tau}{p}\mu_1^{-1},\\
&h_{x,\frac{3N}{4}+1}\ge h_{x,\frac{3N}{4}+2}\ge \cdots\ge h_{x,N-1}\ge C\mu_1^{-1}N^{-1},\\
&1-x_{\frac{3N}{4}+2}\leq C\mu_1^{-1}\ln N.
\end{align*}
\end{lemma}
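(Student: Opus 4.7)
The plan is to unpack the piecewise definition \eqref{eq:mesh-x-points} and estimate each block by elementary calculus, using the assumption \eqref{eq:mu01-assume} throughout. By the left--right symmetry (after reindexing $i\mapsto N-i$, the third branch has the same structural form as the first with $\mu_0$ replaced by $\mu_1$), it suffices to carry out the computations for the left part $0\leq i\leq N/4-1$ and the middle uniform region; the bounds for $3N/4\leq i\leq N-1$ follow by the analogous argument.

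For $0\leq i\leq N/4-2$ the relevant quantity is
\[
h_{x,i}=\frac{\tau}{p\mu_0}\bigl[\varphi_0(t_{i+1})-\varphi_0(t_i)\bigr].
\]
Since $\varphi_0'(t)=4(1-\mu_0^{-1})/(1-4(1-\mu_0^{-1})t)$ is positive and strictly increasing on $[0,1/4)$, the map $i\mapsto h_{x,i}$ is nondecreasing, giving the monotonicity chain. I would then estimate the endpoints explicitly: for $h_{x,0}$ use $-\ln(1-x)\geq x$ together with \eqref{eq:mu01-assume}; for $h_{x,N/4-2}$ combine the two logarithms to get
\[
h_{x,N/4-2}=\frac{\tau}{p\mu_0}\ln\frac{\mu_0^{-1}+8(1-\mu_0^{-1})/N}{\mu_0^{-1}+4(1-\mu_0^{-1})/N},
\]
and apply \eqref{eq:mu01-assume} to sandwich the ratio inside the log between two fixed positive constants, delivering the two-sided estimate $\frac{\tau}{4p}\mu_0^{-1}\leq h_{x,N/4-2}\leq \frac{\tau}{p}\mu_0^{-1}$.

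The transitional interval $h_{x,N/4-1}=\sigma_{x,0}-x_{N/4-1}$ is the main obstacle, because the desired bounds are of qualitatively different character: the lower bound scales with $\mu_0^{-1}$ while the upper bound scales with $N^{-1}$. Direct computation gives
\[
h_{x,N/4-1}=\frac{\tau}{p\mu_0}\ln\bigl(1+4(\mu_0-1)/N\bigr).
\]
Using $\ln(1+x)\leq x$ yields the upper bound $h_{x,N/4-1}\leq \frac{4\tau}{p}N^{-1}$; for the lower bound, \eqref{eq:mu01-assume} implies $\mu_0\geq N$, so the argument of the logarithm exceeds a fixed positive constant and $\ln(1+4(\mu_0-1)/N)\geq 1/2$, which closes the estimate.

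The middle uniform mesh size $h_{x,i}=2(1-\sigma_{x,0}-\sigma_{x,1})/N$ is bounded two-sidedly using the standing hypothesis \eqref{eq:transition-points}. Finally, the estimate $1-x_{3N/4+2}\leq C\mu_1^{-1}\ln N$ is obtained by writing $1-x_{3N/4+2}=\frac{\tau}{p\mu_1}\bigl[-\ln(\mu_1^{-1}+8(1-\mu_1^{-1})/N)\bigr]$ and lower-bounding the argument of the log by a constant multiple of $1/N$, again via \eqref{eq:mu01-assume}. The only genuinely delicate step is the transitional mesh size; all other pieces are mechanical once the correct algebraic form is isolated.
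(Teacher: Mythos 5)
Your proposal is correct. Note that the paper itself gives no proof of this lemma: it simply invokes Lemmas 2 and 3 of the cited reference \cite{Zha1Liu2:2020-Opt}, so your direct verification supplies an argument the paper delegates elsewhere. The computations all check out: the monotonicity of $i\mapsto h_{x,i}$ on the graded part follows from $\varphi_0'$ being positive and increasing; your closed forms $h_{x,N/4-2}=\frac{\tau}{p\mu_0}\ln\frac{\mu_0^{-1}+8(1-\mu_0^{-1})/N}{\mu_0^{-1}+4(1-\mu_0^{-1})/N}$ and $h_{x,N/4-1}=\frac{\tau}{p\mu_0}\ln\bigl(1+4(\mu_0-1)/N\bigr)$ are exactly right, and the assumption \eqref{eq:mu01-assume} (which gives $\mu_0\ge N$) does make the ratio inside the first logarithm lie in roughly $[16/9,2]$, so its log sits between $1/4$ and $1$, and makes the argument of the second logarithm at least $4-4/N\ge 7/2$, so its log exceeds $1/2$; the upper bound $\ln(1+x)\le x$ handles the other side. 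You correctly identify the transitional cell $h_{x,N/4-1}$ (and its mirror $h_{x,3N/4}$) as the only place where the two-sided bounds have different scalings, which is precisely why the paper later needs the sharper Lemma \ref{eq:better-mesh} for $h_{x,3N/4}$. The symmetry reduction to the left half and the estimate $1-x_{3N/4+2}\le C\mu_1^{-1}\ln N$ via lower-bounding the log's argument by $c/N$ are likewise sound.
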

\begin{lemma}\label{eq:mesh-y-sizes}
From \eqref{eq:mesh-y-points}, we can get the mesh size in the $y$ direction as follows
\begin{align*}
&C\sqrt{\varepsilon_1}N^{-1}\leq h_{y,0}\leq h_{y,1}\leq \cdots\leq h_{y,\frac{N}{4}-2},\\
&C_1\sqrt{\varepsilon_1}\leq h_{y,\frac{N}{4}-2}\leq\frac{\tau}{p}\sqrt{\varepsilon_1},\\
&C_2\sqrt{\varepsilon_1}\leq h_{y,\frac{N}{4}-1}\leq \frac{4\tau}{p}N^{-1},\\
&N^{-1}\leq h_{y,j}\leq 2N^{-1}\qquad \frac{N}{4}\leq j\leq \frac{3N}{4}-1,\\
&C_2\sqrt{\varepsilon_1}\leq h_{y,\frac{3N}{4}}\leq \frac{4\tau}{p}N^{-1},\\
&C_1\sqrt{\varepsilon_1}\leq h_{y,\frac{3N}{4}+1}\leq\frac{\tau}{p}\sqrt{\varepsilon_1},\\
&h_{y,\frac{3N}{4}+1}\ge h_{y,\frac{3N}{4}+2}\ge \cdots\ge h_{y,N-1}\ge C\sqrt{\varepsilon_1}N^{-1},
\end{align*}
where $C_1=\frac{\tau}{\delta(\sqrt{c_0}+8)}$, $C_2=\frac{\tau}{\delta(\sqrt{c_0}+4)}.$
\end{lemma}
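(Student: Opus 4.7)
The plan is to obtain each inequality by an explicit computation with the closed form of $\phi_0$ and $\phi_1$, together with the assumption $\varepsilon_1 \le c_0 N^{-2}$ from \eqref{eq:varepsilon12-condition}. Because $\phi_1(t) = \phi_0(1-t)$, the bounds on $h_{y,3N/4}, \ldots, h_{y,N-1}$ are mirror images of those on $h_{y,0}, \ldots, h_{y,N/4-1}$, so it suffices to handle the left graded segment together with the uniform middle segment. In the middle regime $N/4 \le j \le 3N/4 - 1$ the statement is immediate: $h_{y,j} = 2(1 - 2\sigma_y)/N$, and \eqref{eq:transition-points} together with $\sigma_y \ge 0$ yields $N^{-1} \le h_{y,j} \le 2 N^{-1}$.

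For $0 \le j \le N/4 - 2$, I would first note that on $[0,1/4)$
\[
\phi_0'(t) = \frac{4(1-\sqrt{\varepsilon_1})}{1-4(1-\sqrt{\varepsilon_1})t}
\]
is positive and increasing, so $\phi_0$ is convex and the differences $\phi_0(t_{j+1}) - \phi_0(t_j)$ are monotone increasing in $j$; this gives $h_{y,0} \le h_{y,1} \le \cdots \le h_{y, N/4-2}$ at one stroke. The mean value theorem and $\phi_0'(0) = 4(1-\sqrt{\varepsilon_1})$ yield $h_{y,0} \ge C\sqrt{\varepsilon_1}\,N^{-1}$. For $h_{y, N/4-2}$ I would pass to the closed form
\[
h_{y,N/4-2} = \frac{\tau}{\delta}\sqrt{\varepsilon_1}\,\ln\!\Bigl(1 + \frac{4(1-\sqrt{\varepsilon_1})/N}{\sqrt{\varepsilon_1} + 4(1-\sqrt{\varepsilon_1})/N}\Bigr),
\]
apply $\ln(1+x) \le x$ for the upper bound proportional to $\sqrt{\varepsilon_1}$, and $\ln(1+x) \ge x/(1+x)$ combined with the key estimate $\sqrt{\varepsilon_1}\,N \le \sqrt{c_0}$ from \eqref{eq:varepsilon12-condition} for the lower bound $C_1\sqrt{\varepsilon_1}$ with $C_1 = \tau/[\delta(\sqrt{c_0}+8)]$.

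The transition step
\[
h_{y, N/4-1} = \sigma_y - y_{N/4-1} = \frac{\tau}{\delta}\sqrt{\varepsilon_1}\,\ln\!\Bigl(1 + \frac{4(1-\sqrt{\varepsilon_1})}{\sqrt{\varepsilon_1}\,N}\Bigr)
\]
is treated by the same pair of logarithmic inequalities: $\ln(1+x) \le x$ produces the upper bound of order $N^{-1}$, while $\ln(1+x) \ge x/(1+x)$ combined once more with $\sqrt{\varepsilon_1}\,N \le \sqrt{c_0}$ yields the lower bound $C_2\sqrt{\varepsilon_1}$ with $C_2 = \tau/[\delta(\sqrt{c_0}+4)]$. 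The delicate point of the whole lemma is precisely this pairing: since $\sqrt{\varepsilon_1}\,N$ may sit anywhere in $(0,\sqrt{c_0}]$, neither the $\sqrt{\varepsilon_1}$-scale nor the $N^{-1}$-scale dominates a priori, so each transition element genuinely requires a two-sided estimate mixing both scales, which is why $C_1$ and $C_2$ must depend on the structural constant $c_0$ from \eqref{eq:varepsilon12-condition}. Once these mixed bounds are established, every remaining inequality in the lemma reduces to a direct algebraic check via the symmetry $\phi_1(t)=\phi_0(1-t)$.
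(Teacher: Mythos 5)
Your argument is correct and essentially complete, but it is worth noting that the paper itself offers no proof of this lemma: it is stated by appeal to Lemmas 2 and 3 of the cited work of Zhang and Liu, with the $x$-direction result transplanted to the $y$-direction via the substitutions $\mu_i \mapsto \delta/\sqrt{\varepsilon_1}$ and $p \mapsto 1$ in the generating function. Your computation supplies the missing self-contained verification, and it recovers the stated constants exactly: writing $a = 4(1-\sqrt{\varepsilon_1})/N$ one has $\sqrt{\varepsilon_1} + 2a \le (\sqrt{c_0}+8)N^{-1}$ and $\sqrt{\varepsilon_1} + a \le (\sqrt{c_0}+4)N^{-1}$ by \eqref{eq:varepsilon12-condition}, which is precisely where $C_1$ and $C_2$ come from, and your convexity observation for the monotonicity of the graded elements and the reflection identity $\phi_1(t)=\phi_0(1-t)$ (giving $h_{y,N-1-j}$ on the right equal to $h_{y,j}$ on the left) dispose of everything else. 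One small discrepancy: your derivation naturally produces the upper bounds $\frac{\tau}{\delta}\sqrt{\varepsilon_1}$ and $\frac{4\tau}{\delta}N^{-1}$, whereas the lemma states $\frac{\tau}{p}\sqrt{\varepsilon_1}$ and $\frac{4\tau}{p}N^{-1}$; since the $y$-mesh is built from $\sigma_y = \frac{\tau}{\delta}\sqrt{\varepsilon_1}\ln\frac{1}{\sqrt{\varepsilon_1}}$ and the parameter $p$ never enters \eqref{eq:mesh-y-points}, the factor $1/p$ in the statement appears to be carried over from the $x$-direction lemma rather than something your (or any) proof should reproduce; the discrepancy is harmless since only the $\sqrt{\varepsilon_1}$ and $N^{-1}$ scalings are used downstream, but you should flag that your constants differ from the printed ones.
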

\begin{lemma}\label{lem:Error-condition}
For $0\leq i\leq\frac{N}{4}-2$ and $0\leq m\leq\tau$, one has
\begin{equation}\label{lem:e-E10}
h_{x,i}^me^{-p\mu_0x_i}\leq C\mu_0^{-m}N^{-m}.
\end{equation}
For $\frac{3N}{4}+1\leq i\leq N-1$ and $0\leq m\leq\tau$, one has
\begin{equation}\label{lem:e-E11}
h_{x,i}^me^{-p\mu_1(1-x_{i+1})}\leq C\mu_1^{-m}N^{-m}.
\end{equation}
For $0\leq j\leq\frac{N}{4}-2$ and $0\leq m\leq\tau$, one has
\begin{equation}\label{lem:e-E20}
h_{y,j}^me^{-\frac{\delta y_j}{\sqrt{\varepsilon_1}}}\leq C\varepsilon_1^{\frac{m}{2}}N^{-m}.
\end{equation}
For $\frac{3N}{4}+1\leq j\leq N-1$ and $0\leq m\leq\tau$, one has
\begin{equation*}\label{lem:e-E21}
h_{y,j}^me^{-\frac{\delta (1-y_{j+1})}{\sqrt{\varepsilon_1}}}\leq C\varepsilon_1^{\frac{m}{2}}N^{-m}.
\end{equation*}
\end{lemma}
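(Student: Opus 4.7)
The plan is to exploit the explicit form of the mesh-generating functions $\varphi_0,\varphi_1,\phi_0,\phi_1$ so that each exponential factor collapses to a clean polynomial expression, after which the mean value theorem applied to these functions delivers the bound. All four inequalities have the same structure, so I would treat \eqref{lem:e-E10} in detail; \eqref{lem:e-E11} follows by the symmetry $x\mapsto 1-x$ (interchanging the roles of $\mu_0$ and $\mu_1$), and \eqref{lem:e-E20} together with its analogue are word-for-word copies with $\mu_0^{-1}$ replaced by $\sqrt{\varepsilon_1}$ and $p$ replaced by $\delta$, using $\sqrt{\varepsilon_1}\le \sqrt{c_0}\,N^{-1}$ from \eqref{eq:varepsilon12-condition} in place of $\mu_0^{-1}\le N^{-1}$.

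The first step is the observation, straight from the definition \eqref{eq:mesh-x-points}, that
\[
p\mu_0 x_i = \tau\varphi_0(t_i) = -\tau\ln\bigl(1-4(1-\mu_0^{-1})t_i\bigr),\qquad e^{-p\mu_0 x_i} = \bigl(1-4(1-\mu_0^{-1})t_i\bigr)^{\tau}.
\]
Next I would apply the mean value theorem on $[t_i,t_{i+1}]$ combined with monotonicity of $\varphi_0'(t)=4(1-\mu_0^{-1})/(1-4(1-\mu_0^{-1})t)$ to obtain
\[
h_{x,i} \leq \frac{\tau}{p\mu_0}\,\varphi_0'(t_{i+1})\cdot\frac{1}{N} = \frac{4\tau}{p\mu_0 N}\cdot\frac{1-\mu_0^{-1}}{1-4(1-\mu_0^{-1})t_{i+1}}.
\]
Introducing $\alpha_i := 1-4(1-\mu_0^{-1})t_i$, the target inequality reduces to
\[
h_{x,i}^m e^{-p\mu_0 x_i}\leq \Bigl(\frac{4\tau}{p\mu_0 N}\Bigr)^{m}\frac{\alpha_i^{\tau}}{\alpha_{i+1}^m},
\]
so everything boils down to showing $\alpha_i^{\tau}/\alpha_{i+1}^m\leq C$ uniformly in $i$, $m$, $N$, $\varepsilon_1$, $\varepsilon_2$.

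For the range $0\le i\le N/4-2$, one has $t_{i+1}\le 1/4-1/N$, hence $\alpha_{i+1}\ge \mu_0^{-1}+4(1-\mu_0^{-1})/N$; invoking $\mu_0^{-1}\le N^{-1}$ from \eqref{eq:mu01-assume} together with $N\ge 8$ forces $\alpha_{i+1}\ge 2/N$. Since $\alpha_i-\alpha_{i+1}=4(1-\mu_0^{-1})/N\le 4/N$, this gives $\alpha_i/\alpha_{i+1}\le 3$, so $(\alpha_i/\alpha_{i+1})^m\le 3^{\tau}$. Because $\alpha_i\le 1$ and $m\le \tau$, one also has $\alpha_i^{\tau-m}\le 1$, and multiplying together yields the required uniform constant bound.

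The only delicate point is what happens at the right endpoint $i=N/4-2$, where $\alpha_{i+1}$ could in principle be as small as $4(1-\mu_0^{-1})/N$; the assumption $\mu_0^{-1}\le N^{-1}$ in \eqref{eq:mu01-assume} is precisely what prevents degeneration there, and I expect this to be the main technical hurdle (and the reason that \eqref{eq:mu01-assume} is imposed in the first place). The $y$-direction argument is identical, with $\sqrt{\varepsilon_1}\leq \sqrt{c_0}\,N^{-1}$ from \eqref{eq:varepsilon12-condition} playing the same role as $\mu_0^{-1}\leq N^{-1}$ does here.
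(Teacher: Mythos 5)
Your argument is correct and complete for \eqref{lem:e-E10}, and the reductions you describe for the other three estimates are genuinely routine. For comparison: the paper itself offers no proof of this lemma at all --- it is imported verbatim by citing Lemmas 2 and 3 of \cite{Zha1Liu2:2020-Opt} --- so your write-up supplies an argument where the paper only supplies a reference. The mechanism you use (rewriting $e^{-p\mu_0 x_i}=\alpha_i^{\tau}$ with $\alpha_i=1-4(1-\mu_0^{-1})t_i$ via the explicit logarithmic mesh-generating function, bounding $h_{x,i}\le \frac{\tau}{p\mu_0}\varphi_0'(t_{i+1})N^{-1}$ by monotonicity of $\varphi_0'$, and then controlling $\alpha_i^{\tau}/\alpha_{i+1}^m=\alpha_i^{\tau-m}(\alpha_i/\alpha_{i+1})^m$) is the standard one for Bakhvalov-type meshes and is essentially what the cited source does; it is exactly why the hypothesis $m\le\tau$ is needed. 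One small correction to your commentary: the assumption $\mu_0^{-1}\le N^{-1}$ from \eqref{eq:mu01-assume} is not what rescues the endpoint case $i=\frac{N}{4}-2$. Your own chain of inequalities already shows $\alpha_{i+1}\ge \mu_0^{-1}(1-4/N)+4/N\ge 4/N$ using only $\mu_0^{-1}>0$ and $N\ge 8$; an upper bound on $\mu_0^{-1}$ contributes nothing to a lower bound on $\alpha_{i+1}$. What actually prevents degeneration is that the index range stops at $i=\frac{N}{4}-2$, so that $t_{i+1}\le \frac14-\frac1N$ and the problematic cell $[x_{N/4-1},x_{N/4}]$ (where $\alpha_{i+1}$ drops to $\mu_0^{-1}$ and the ratio $\alpha_i/\alpha_{i+1}$ can blow up like $\mu_0/N$) is excluded --- which is precisely why the paper treats $h_{x,\frac{N}{4}-1}$ and $h_{x,\frac{3N}{4}}$ separately in Lemmas \ref{eq:mesh-x-sizes} and \ref{eq:better-mesh}. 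This does not affect the validity of your proof, only the explanation of which hypothesis is doing the work.
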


Also we need to re-estimate  $h_{x,\frac{3N}{4}}$ for our convergence analysis.
\begin{lemma}\label{eq:better-mesh}
For any fixed $\eta\in(0,1]$, one has
\begin{equation*}
h_{x,\frac{3N}{4}}\leq C\mu_1^{\eta-1}N^{-\eta}.
\end{equation*}
\end{lemma}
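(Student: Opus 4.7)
The plan is to compute $h_{x,\frac{3N}{4}}$ in closed form from the Bakhvalov-type mesh definition \eqref{eq:mesh-x-points}, and then bound the resulting logarithm by a power function using the assumption $\mu_1\geq N$.

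First, I would evaluate the two mesh points bracketing $h_{x,\frac{3N}{4}}$. Since $x_{\frac{3N}{4}}$ sits at the right transition, both the middle branch and the right branch of \eqref{eq:mesh-x-points} give
$$x_{\frac{3N}{4}} = 1-\sigma_{x,1} = 1-\frac{\tau}{p\mu_1}\ln\mu_1,$$
while substituting $t_{\frac{3N}{4}+1}=\tfrac{3}{4}+\tfrac{1}{N}$ into the right branch gives
$$x_{\frac{3N}{4}+1} = 1 + \frac{\tau}{p\mu_1}\ln\!\Bigl(\mu_1^{-1}+4(1-\mu_1^{-1})/N\Bigr).$$
Subtracting and simplifying under the logarithm yields the closed form
$$h_{x,\frac{3N}{4}} = \frac{\tau}{p\mu_1}\ln\!\Bigl(1+\frac{4(\mu_1-1)}{N}\Bigr).$$

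Second, I would apply the elementary inequality $\ln(1+z) \leq \eta^{-1}(1+z)^{\eta}$, valid for $z\geq 0$ and $\eta\in(0,1]$; this follows by writing $\ln(1+z) = \eta^{-1}\ln\bigl((1+z)^{\eta}\bigr)$ and then using $\ln(1+w)\leq w$ with $w=(1+z)^{\eta}-1\geq 0$. Taking $z = 4(\mu_1-1)/N$ reduces the task to bounding $(1+z)^{\eta}$ by a constant multiple of $(\mu_1/N)^{\eta}$.

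Third, I would invoke the standing assumption \eqref{eq:mu01-assume} together with $\mu_0\leq\mu_1$ from \eqref{cos:conclusions-1}; these imply $\mu_1\geq\mu_0\geq N$. Hence
$$1+\frac{4(\mu_1-1)}{N} = \frac{N+4\mu_1-4}{N} \leq \frac{5\mu_1}{N},$$
so $(1+z)^{\eta}\leq 5^{\eta}\mu_1^{\eta}N^{-\eta}$. Combining the three steps gives
$$h_{x,\frac{3N}{4}} \leq \frac{\tau\, 5^{\eta}}{p\,\eta}\,\mu_1^{\eta-1}N^{-\eta} = C\mu_1^{\eta-1}N^{-\eta},$$
where $C$ absorbs the $\eta$-dependent factors (permissible since $\eta$ is fixed).

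The main thing to get right is the choice of inequality for $\ln(1+z)$: the conclusion has to interpolate between the two crude bounds $h_{x,\frac{3N}{4}}\leq\frac{4\tau}{p}N^{-1}$ and $h_{x,\frac{3N}{4}}\geq\frac{\tau}{2p}\mu_1^{-1}$ already available from Lemma \ref{eq:mesh-x-sizes} (corresponding to the endpoints $\eta=1$ and $\eta\to 0$). Because $\mu_1\geq N$, the variable $z = 4(\mu_1-1)/N$ is bounded away from zero, which is exactly what makes the power-$\eta$ inequality tight enough to yield the stated interpolated rate.
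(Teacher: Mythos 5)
Your proposal is correct and follows essentially the same route as the paper: compute $h_{x,\frac{3N}{4}}=\frac{\tau}{p\mu_1}\ln\bigl(1+4(\mu_1-1)/N\bigr)$ from \eqref{eq:mesh-x-points}, bound the logarithm by $\eta^{-1}(\cdot)^{\eta}$, and use $\mu_1\geq N$ from \eqref{eq:mu01-assume} to absorb the argument of the logarithm into $C\mu_1^{\eta}N^{-\eta}$. If anything, your intermediate bound $1+4(\mu_1-1)/N\leq 5\mu_1/N$ is slightly more careful than the paper's step $\ln\bigl(1+4(\mu_1-1)/N\bigr)\leq\ln(N^{-1}\mu_1)$, which as written drops a harmless constant factor inside the logarithm; the final estimate is unaffected.
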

\begin{proof}
For any fixed $\eta\in(0,1]$, standard arguments show
\begin{equation*}\label{eq:alpha}
\ln x\leq \frac{x^{\eta}}{\eta}\quad x\in[1,+\infty).
\end{equation*}
Combine  \eqref{eq:mesh-x-points} to get
\begin{equation*}
\begin{aligned}
h_{x,\frac{3N}{4}}&= \frac{\tau}{p\mu_1}\ln \frac{\mu_1^{-1}+4(1-\mu_1^{-1})N^{-1}}{\mu_1^{-1}}\\
&\leq \frac{\tau}{p\mu_1}\ln ( {N^{-1}}{\mu_1})\leq \frac{1}{\eta}\frac{\tau}{p\mu_1}{N^{-\eta}}{\mu_1^{\eta}}\\
&\leq C\mu_1^{\eta-1}N^{-\eta}.
\end{aligned}
\end{equation*}
\end{proof}

\subsection{Finite element method}
The weak form of problem \eqref{eq:I-condition-1} is to find $u\in H^1_0(\Omega)$ such that
\begin{equation}\label{eq:weak form-1d}
a(u, v)=(f, v) \quad \forall v\in H^1_0(\Omega),
\end{equation}
where
\begin{equation}\label{eq:bilinear form}
a(u,v):=\varepsilon_1 (\nabla u, \nabla v)+(\varepsilon_2bu_x+cu, v) 
\end{equation}
and $(\cdot,\cdot)$ denotes the standard scalar product in $L^2(\Omega)$.

Define the finite element space on the Bakhvalov-type  mesh
\begin{equation*}\label{eq:VN}
V^{N}=\{w\in C(\bar{\Omega}): w\vert_{\partial\Omega}=0,
 w\vert_{\mathscr{T}}\in \mathcal{Q}_k(\mathscr{T}) \quad \forall
\mathscr{T}\in\mathcal{T} \} ,
\end{equation*}
where $\mathcal{Q}_k(\mathscr{T})=\sum\limits_{0\leq i,j\leq k}\alpha_{ij}x^iy^j$ with constants $\alpha_{ij}\in\mathbb{R}$.

The finite element method for \eqref{eq:weak form-1d} is to find $u^N\in V^{N}$ such that
\begin{equation}\label{eq:FE-1d}
a(u^N,v^N)=(f,v^N) \quad \forall v^N\in V^N.
\end{equation}
The energy norm associated with $a(\cdot,\cdot)$ is defined by
\begin{equation*}\label{eq:energy norm}
\Vert v \Vert_{ E}^2:=
\varepsilon_1 \vert  v \vert^2_1+ \Vert v \Vert^2  \quad \forall v\in H^1(\Omega).
\end{equation*}
Using \eqref{eq:I-condition-3},  it's easy to prove coercivity
\begin{equation}\label{eq:coercivity}
a(v^N,v^N) \ge C \Vert v^N \Vert_{E}^2\quad \text{for all $v^N\in V^N$}.
\end{equation}
It follows that $u^N$ is well defined by \eqref{eq:FE-1d} (see \cite{Bren1Scot2:2008-mathematical} and references therein). 
\section{Interpolation errors}
In this section we will introduce a new interpolation. The structure of this interpolation is similar to one in \cite{Zha1Liu2:2020-Opt}.
Set $x_i^s:=x_i+\frac{s}{k}h_{x,i}$ and $y_j^t:=y_j+\frac{t}{k}h_{y,j}$ for $i,j=0,1,\cdots,N-1$ and $s,t=1,2,\cdots,k.$

For any $v\in C^0(\bar{\Omega})$ its Lagrange interpolation $v^I\in V^N$ on the  Bakhvalov-type  mesh is defined by
$$\begin{aligned}
v^I(x,y)=&\sum_{i=0}^{N-1}\sum_{s=0}^{k-1}\left(
\sum_{j=0}^{N-1}\sum_{t=0}^{k-1}v(x_i^s,y_j^t)\theta_{i,j}^{s,t}(x,y)+v(x_i^s,y_N^0)\theta_{i,N}^{s,0}(x,y)\right)\\
&+\sum_{j=0}^{N-1}\sum_{t=0}^{k-1}v(x_N^0,y_j^t)\theta_{N,j}^{0,t}(x,y)+v(x_N^0,y_N^0)\theta_{N,N}^{0,0}(x,y),
\end{aligned}$$
where $\theta_{i,j}^{s,t}(x,y)\in V^N$ is the piecewise $k$th-order Lagrange basis function satisfying
the well-known delta properties associated with the nodes $(x_i^s,y_j^t)$.
We define the interpolation $\Pi u$ to the solution $u$ by
\begin{equation}\label{eq:Interpolation-u}
\Pi u:=S^I+E_{10}^I+\pi_{11} E_{11}+E_{20}^I+E_{21}^I+E_{31}^I+\pi_{32}E_{32}+\pi_{33}E_{33}+E_{34}^I,
\end{equation}
where
\begin{equation}\label{eq:define-piE}
\pi_iE_i(x,y)=E_i^I-PE_i+\Theta E_i\quad\text{for $i=11,32,33$}
\end{equation}
with 
\begin{align*}
&(PE_{a})(x,y)=\sum_{i=\frac{3N}{4}}\sum_{s=1}^{k}\left(\sum_{j=0}^{N-1}\sum_{t=0}^{k-1}E_{a}(x_i^s,y_j^t)\theta_{i,j}^{s,t}(x,y)+E_{a}(x_i^s,y_N^0)\theta_{i,N}^{s,0}(x,y)\right)\\
&(\Theta E_{a})(x,y)=\sum_{s=1}^{k}E_{a}(x_{\frac{3N}{4}}^s,y_0^0)\theta_{\frac{3N}{4},0}^{s,0}+\sum_{s=1}^{k}E_{a}(x_{\frac{3N}{4}}^s,y_N^0)\theta_{\frac{3N}{4},N}^{s,0}\quad a=11,32,33.
\end{align*}

From \eqref{eq:Interpolation-u} and \eqref{eq:define-piE} we can easily get $\Pi u\in V^N$ and 
\begin{equation}\label{eq:piu}
\Pi u=u^I-\sum\limits_{i=11,32,33}(PE_i-\Theta E_i).
\end{equation}

Next, we will prove the Lagrange interpolation estimation.
From \cite[Theorem 2.7]{Ape1Tho2:1999-Ani}, we have the following anisotropic interpolation results.
\begin{lemma}\label{eq:Interpolation-error}
Let $\mathscr{T}\in\mathcal{T}$ and $v\in H^{k+1}(\mathscr{T})$. Then there exists a constant C such that Lagrange interpolation $v^I$ satisfies
\begin{align*}
\Vert v-v^I\Vert_{\mathscr{T}}\leq C\sum_{i+j=k+1}h_{x,\mathscr{T}}^ih_{y,\mathscr{T}}^j\left\| \frac{\partial^{k+1}v}{\partial x^i\partial y^j}\right\|_{\mathscr{T}},\\
\Vert (v-v^I)_x\Vert_{\mathscr{T}}\leq C\sum_{i+j=k}h_{x,\mathscr{T}}^ih_{y,\mathscr{T}}^j\left\| \frac{\partial^{k+1}v}{\partial x^{i+1}\partial y^j}\right\|_{\mathscr{T}},\\
\Vert (v-v^I)_y\Vert_{\mathscr{T}}\leq C\sum_{i+j=k}h_{x,\mathscr{T}}^ih_{y,\mathscr{T}}^j\left\| \frac{\partial^{k+1}v}{\partial x^i\partial y^{j+1}}\right\|_{\mathscr{T}},
\end{align*}
where $h_{x,\mathscr{T}}$ and $h_{y,\mathscr{T}}$ are respectively the mesh size in $x$ direction and $y$ direction on the  rectangular interval $\mathscr{T}$.
\end{lemma}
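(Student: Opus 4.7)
The plan is to reduce to a reference element via affine scaling and then invoke a Bramble--Hilbert/Deny--Lions argument tailored to preserve the anisotropic weighting of the two coordinate directions.

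First I would fix $\mathscr{T}=[x_i,x_{i+1}]\times[y_j,y_{j+1}]$ with side lengths $h_x:=h_{x,\mathscr{T}}$, $h_y:=h_{y,\mathscr{T}}$ and introduce the affine map $F:\hat{\mathscr{T}}:=[0,1]^2\to\mathscr{T}$ given by $F(\hat x,\hat y)=(x_i+h_x\hat x,\,y_j+h_y\hat y)$. Writing $\hat v:=v\circ F$, the fact that $F$ sends tensor-product Lagrange nodes to tensor-product Lagrange nodes gives $\widehat{v^I}=(\hat v)^I$, and a change of variables combined with the chain rule yields
\begin{align*}
\|v-v^I\|_{\mathscr{T}} &= (h_xh_y)^{1/2}\,\|\hat v-\hat v^I\|_{\hat{\mathscr{T}}},\\
\|(v-v^I)_x\|_{\mathscr{T}} &= h_x^{-1}(h_xh_y)^{1/2}\,\|(\hat v-\hat v^I)_{\hat x}\|_{\hat{\mathscr{T}}},
\end{align*}
with the symmetric identity in $y$, and analogous identities relating $\|\partial^{k+1}\hat v/\partial\hat x^i\partial\hat y^j\|_{\hat{\mathscr{T}}}$ to the corresponding physical norm with weight $h_x^{-i}h_y^{-j}(h_xh_y)^{1/2}$. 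Matching powers, all three bounds reduce to proving the reference-element estimates
\[
\|\hat v-\hat v^I\|_{\hat{\mathscr{T}}}\leq C\!\!\sum_{i+j=k+1}\!\!\left\|\tfrac{\partial^{k+1}\hat v}{\partial\hat x^i\partial\hat y^j}\right\|_{\hat{\mathscr{T}}},
\]
together with its two derivative analogues over $i+j=k$.

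Second, on $\hat{\mathscr{T}}$ the interpolation operator $\hat I$ reproduces every $\hat p\in\mathcal{Q}_k$, so for any such $\hat p$ one has $\hat v-\hat v^I=(\hat v-\hat p)-\hat I(\hat v-\hat p)$, and the embedding $H^{k+1}(\hat{\mathscr{T}})\hookrightarrow C^0(\hat{\mathscr{T}})$ together with the continuity of $\hat I$ on the finite-dimensional space bounds both pieces by $\|\hat v-\hat p\|_{H^{k+1}(\hat{\mathscr{T}})}$. The delicate task is to select $\hat p$ so that this $H^{k+1}$-norm is controlled not by the full isotropic seminorm $|\hat v|_{H^{k+1}}$ but only by the sum of mixed $(i,j)$-derivatives with $i+j=k+1$ present in the statement. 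I would do this one variable at a time: replace $\hat v$ by the degree-$k$ averaged Taylor polynomial in $\hat x$ (integrating over $\hat y$), whose residual involves only $\partial_{\hat x}^{k+1}\hat v$, and then apply the same reduction in $\hat y$ to the partial Taylor remainder. The composite polynomial lies in $\mathcal{Q}_k$, and its error decomposes exactly into the required mixed-derivative contributions. Repeating the argument for $(\hat v-\hat v^I)_{\hat x}$ and $(\hat v-\hat v^I)_{\hat y}$, where the preserved polynomial space loses one degree in the differentiated variable, supplies the two derivative bounds with $i+j=k$.

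Finally, pulling the reference-element estimates back through $F$ converts each factor $\partial^{k+1}\hat v/\partial\hat x^i\partial\hat y^j$ into the corresponding physical derivative multiplied by $h_x^i h_y^j$, which combines with the measure factors displayed above to reproduce exactly the bounds of the lemma. The main obstacle is the second step: a naive Bramble--Hilbert argument on $\hat{\mathscr{T}}$ yields only $|\hat v|_{H^{k+1}}$, and after pull-back this weights every mixed derivative by the \emph{maximum} of $h_x^{k+1},h_y^{k+1}$, which would destroy the layer-adapted structure that the rest of the paper relies on. Producing a $\mathcal{Q}_k$ polynomial with approximation properties \emph{decoupled} in the two variables, so that each physical derivative is paired with the matching anisotropic prefactor, is the content of Apel--Thom\'ee's Theorem 2.7 and is the only non-routine ingredient in the proof.
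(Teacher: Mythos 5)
Your proposal is correct and follows essentially the same route as the paper, which proves this lemma simply by citing Theorem 2.7 of Apel's monograph on anisotropic finite elements; you reconstruct the standard scaling-to-the-reference-element reduction and correctly identify that the only non-routine ingredient, the direction-decoupled Bramble--Hilbert estimate, is exactly the content of that cited theorem.
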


\begin{lemma}\label{eq:Interpolation-L2}
Assume $\tau\geq k+1$. On Bakhvalov-type mesh $\mathcal{T}$, one has
\begin{align*}
&\Vert E_i-E_i^I\Vert\leq CN^{-(k+1)}\qquad i=10,11,20,21,31,32,33,34.
\end{align*}
\end{lemma}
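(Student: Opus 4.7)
The strategy is to prove each bound separately by splitting $\Omega$, for the given component $E_i$, into a fine-mesh subregion supporting the layer and a coarse-mesh subregion where the layer is exponentially small. I will sketch the argument in detail for $E_{10}$; the other boundary-layer components are handled by symmetric reasoning (swapping $\mu_0\leftrightarrow\mu_1$, or replacing $\mu_0^{-1}$ by $\sqrt{\varepsilon_1}$ in the $y$-direction), and the corner layers combine the two directional versions.

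Split $\Omega=\Omega_f\cup\Omega_c$ with $\Omega_f=[0,\sigma_{x,0}]\times[0,1]$ and $\Omega_c=[\sigma_{x,0},1]\times[0,1]$. On $\Omega_c$, estimate \eqref{eq:E10-property} together with the definition of $\sigma_{x,0}$ gives $|E_{10}(x,y)|\le C e^{-p\mu_0\sigma_{x,0}}=C\mu_0^{-\tau}$ pointwise; since every Lagrange node in $\Omega_c$ lies in $\{x\ge\sigma_{x,0}\}$, the same bound applies at those nodes, hence $|E_{10}^I|\le C\mu_0^{-\tau}$ on $\Omega_c$ by the uniform boundedness of the Lagrange basis. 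Using $\mu_0^{-1}\le N^{-1}$ from \eqref{eq:mu01-assume} and $\tau\ge k+1$, I conclude $\|E_{10}-E_{10}^I\|_{\Omega_c}\le C\mu_0^{-\tau}\le CN^{-(k+1)}$.

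On $\Omega_f$, I apply Lemma \ref{eq:Interpolation-error} elementwise. For $0\le i\le\frac{N}{4}-2$, substituting \eqref{eq:E10-property} and using $\int_{x_i}^{x_{i+1}}e^{-2p\mu_0 x}\,dx\le h_{x,i}e^{-2p\mu_0 x_i}$, followed by Lemma \ref{lem:Error-condition}, yields
\[
h_{x,i}^{2m}h_{y,j}^{2n}\|\partial_x^m\partial_y^n E_{10}\|_{\mathscr{T}_{i,j}}^2 \le C\bigl((h_{x,i}\mu_0)^m e^{-p\mu_0 x_i}\bigr)^2 h_{x,i}h_{y,j}^{2n+1} \le CN^{-2m}\,h_{x,i}\,h_{y,j}^{2n+1}
\]
for every $m+n=k+1$. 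Summing over $i$ and $j$, using $\sum_i h_{x,i}\le\sigma_{x,0}\le 1$ and the fact that $h_{y,j}\le CN^{-1}$ (from Lemma \ref{eq:mesh-y-sizes} together with \eqref{eq:varepsilon12-condition}), which gives $\sum_j h_{y,j}^{2n+1}\le CN^{-2n}$, bounds this contribution by $CN^{-2(k+1)}$. The remaining transition strip $i=\frac{N}{4}-1$ is treated pointwise: the explicit mesh formula gives $e^{-p\mu_0 x_{N/4-1}}\le (5N^{-1})^\tau\le CN^{-\tau}$, so both $E_{10}$ and $E_{10}^I$ are $O(N^{-\tau})$ on this strip, and since $h_{x,N/4-1}\le CN^{-1}$, its contribution to $\|E_{10}-E_{10}^I\|^2$ is at most $CN^{-2\tau-1}\le CN^{-2(k+1)}$.

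The remaining components follow the same template. For $E_{11}$ the transition strip is $i=\frac{3N}{4}$ and the analogous pointwise estimate is $e^{-p\mu_1(1-x_{3N/4+1})}\le CN^{-\tau}$; for the parabolic layers $E_{20},E_{21}$, the $y$-direction plays the role of $x$, with $\sqrt{\varepsilon_1}\le CN^{-1}$ replacing $\mu_0^{-1}\le N^{-1}$ and \eqref{lem:e-E20} replacing \eqref{lem:e-E10}. For the corner components $E_{31},\dots,E_{34}$, the domain is partitioned by both transition points into four rectangles: on the corner fine rectangle both directional versions of Lemma \ref{lem:Error-condition} are invoked in the product derivative bound, while on the remaining three rectangles at least one of the two exponentials is $\le CN^{-\tau}$, which reduces those estimates to the trivial-smallness case. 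The main technical obstacle is the clean treatment of the transition strip, where Lemma \ref{lem:Error-condition} does not directly apply; the rescuing fact is that the underlying layer function is already of size $N^{-\tau}$ on that strip, a smallness that absorbs the single factor of $N$ lost to the coarser mesh width. The sharper mesh estimate of Lemma \ref{eq:better-mesh} is not required for the present $L^2$-bound but will presumably be essential in the subsequent energy-norm convergence analysis.
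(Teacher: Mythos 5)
Your proof is correct and follows essentially the same strategy as the paper: apply the anisotropic interpolation estimate together with Lemma \ref{lem:Error-condition} on the fine cells, and use the pointwise exponential smallness of the layer (via the triangle inequality and the boundedness of the Lagrange basis) on the remaining region; the only cosmetic difference is that you split at $x_{\frac{N}{4}}$ and treat the strip $i=\frac{N}{4}-1$ by a separate pointwise bound, whereas the paper splits at $x_{\frac{N}{4}-1}$ and absorbs that strip into the ``small'' region. The corner-layer treatment (four rectangles versus the paper's three) is likewise equivalent in substance.
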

\begin{proof}
To consider $\Vert E_{10}-E_{10}^I\Vert$, we decompose it as follows
$$
\begin{aligned}
\Vert E_{10}-E_{10}^I\Vert^2&=\Vert E_{10}-E_{10}^I\Vert^2_{[x_0,x_{\frac{N}{4}-1}]\times[0,1]}
 +\Vert E_{10}-E_{10}^I\Vert^2_{[x_{\frac{N}{4}-1},x_N]\times[0,1]}\\
&=:A_1+A_2.
\end{aligned}
$$
Using  \eqref{eq:E10-property}, Lemmas \ref{eq:mesh-x-sizes}, \ref{eq:mesh-y-sizes}, \ref{eq:Interpolation-error} and \eqref{lem:e-E10} with $m=l$ we obtain
\begin{equation}\label{eq:A1}
\begin{aligned}
A_1=&\sum_{i=0}^{\frac{N}{4}-2}\sum_{j=0}^{N-1}\Vert E_{10}-E_{10}^I\Vert_{\mathscr{T}_{i,j}}^2\leq C\sum_{i=0}^{\frac{N}{4}-2}\sum_{j=0}^{N-1}\sum\limits_{l+r=k+1}h_{x,i}^{2l}h_{y,j}^{2r}\left\|\frac{\partial^{k+1}E_{10}}{\partial x^l\partial y^r}\right\|_{\mathscr{T}_{i,j}}^2\\
&\leq C\sum_{i=0}^{\frac{N}{4}-2}\sum_{j=0}^{N-1}\sum\limits_{l+r=k+1}h_{x,i}^{2l}h_{y,j}^{2r}(\mu_0^{2l}e^{-2p\mu_0x_i}h_{x,i}h_{y,j})\\
&\leq C\sum_{i=0}^{\frac{N}{4}-2}\sum_{j=0}^{N-1}\sum\limits_{l+r=k+1}(\mu_0^{-2l}N^{-2l})\mu_0^{2l}h_{x,i}h_{y,j}^{2r+1}\\
&\leq C\sum_{i=0}^{\frac{N}{4}-2}\sum_{j=0}^{N-1}\mu_0^{-1}N^{-2(k+1)-1}\\
&\leq C\mu_0^{-1}N^{-(2k+1)},
\end{aligned}
\end{equation}
and after a simple calculation, we get $\vert E_{10}(x,y)\vert_{[x_{\frac{N}{4}-1},x_N]\times[0,1]}\leq CN^{-\tau}.$ 
Then  the triangle inequality  yields
\begin{equation}\label{eq:A2}
\begin{aligned}
A_2&\leq C(\Vert E_{10}\Vert_{[x_{\frac{N}{4}-1},x_N]\times[0,1]}^2+\Vert E_{10}^I\Vert_{[x_{\frac{N}{4}-1},x_N]\times[0,1]}^2)\\
&\leq C(\Vert E_{10}\Vert_{\infty,[x_{\frac{N}{4}-1},x_N]\times[0,1]}^2+\Vert E_{10}^I\Vert_{\infty,[x_{\frac{N}{4}-1},x_N]\times[0,1]}^2)\\
&\leq C\Vert E_{10}\Vert_{\infty,[x_{\frac{N}{4}-1},x_N]\times[0,1]}^2\\
&\leq CN^{-2\tau}.
\end{aligned}
\end{equation}
From \eqref{eq:mu01-assume},\eqref{eq:A1} and \eqref{eq:A2} we could prove our conclusion. Using the same  method we could get the  estimates of $\Vert E_i-E_i^I\Vert$ with $i=11,20,21,31,32,33,34$. For the cases of $i=31,32,33,34$, we divide the whole interval into three pieces not two pieces in the case of $E_{10}-E_{10}^I$. For example,
for $\Vert E_{31}-E_{31}^I\Vert$, we can break it down into  
\begin{equation*}\label{eq:E31-decompose}
\begin{aligned}
\Vert E_{31}-E_{31}^I\Vert^2&=\Vert E_{31}-E_{31}^I\Vert_{[x_0,x_{\frac{N}{4}-1}]\times[y_0,y_{\frac{N}{4}-1}]}^2\\
&+\Vert E_{31}-E_{31}^I\Vert^2_{[x_0,x_{\frac{N}{4}-1}]\times[y_{\frac{N}{4}-1},y_N]}+\Vert E_{31}-E_{31}^I\Vert^2_{[x_{\frac{N}{4}-1},x_N]\times[0,1]}\\
&=:B_1+B_2+B_3.
\end{aligned}
\end{equation*}
Similar to \eqref{eq:A1}, we get
\begin{equation*}\label{eq:B1}
\begin{aligned}
B_1\leq C\varepsilon_1^{\frac{1}{2}}\mu_0^{-1}N^{-2k}.
\end{aligned}
\end{equation*}
And similar to \eqref{eq:A2}, one has
\begin{equation*}\label{eq:B2}
B_2+B_3\leq CN^{-2\tau}.
\end{equation*}

\end{proof}

\begin{lemma}\label{eq:Interpolation-E-energy}
Assume $\tau\geq k+1$. On Bakhvalov-type mesh $\mathcal{T}$, one has
\begin{align*}
&\Vert E_i-E_i^I\Vert_E\leq C(\varepsilon_1^{\frac{1}{2}}+\varepsilon_2)^{\frac{1}{2}}N^{-k}+CN^{-(k+1)}\quad i=10,11,20,21,\\
&\Vert PE_{11}\Vert_E\leq CN^{-\tau-\frac{1}{2}},\\
&\Vert\Theta E_{11}\Vert_E\leq C\varepsilon_1^{\frac{1}{4}}N^{-\tau}.
\end{align*}
\end{lemma}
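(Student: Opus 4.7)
The plan is to handle $\Vert v\Vert_E^{2}=\varepsilon_1|v|_1^{2}+\Vert v\Vert^{2}$ piecewise, using Lemma \ref{eq:Interpolation-L2} for the $L^2$ part of the first estimate and deriving the $H^1$-seminorm contributions directly. I will take $E_{10}$ as the prototype for the first family, since $E_{11},E_{20},E_{21}$ are symmetric.

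Outside its exponential strip $E_{10}$ is $O(N^{-\tau})$ by \eqref{eq:E10-property} and \eqref{eq:mu01-assume}, so the contribution there is negligible. Inside the strip, Lemma \ref{eq:Interpolation-error} applied to $\partial_x$ together with \eqref{eq:E10-property} yields the cellwise bound $Ch_{x,i}^{2l+1}h_{y,j}^{2r+1}\mu_0^{2(l+1)}e^{-2p\mu_0 x_i}$ for $l+r=k$. The crucial trick is the compound estimate
\[
h_{x,i}^{2l+1}e^{-2p\mu_0 x_i}=\bigl(h_{x,i}^{\,l}e^{-p\mu_0 x_i}\bigr)\bigl(h_{x,i}^{\,l+1}e^{-p\mu_0 x_i}\bigr)\le C\mu_0^{-(2l+1)}N^{-(2l+1)},
\]
obtained by applying \eqref{lem:e-E10} with $m=l$ and $m=l+1$. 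Summing over $i,j$ (using $\sum_j h_{y,j}^{2r+1}\le CN^{-2r}$ since each $h_{y,j}\le CN^{-1}$, and $\sum_i h_{x,i}\le 1$) produces $\Vert(E_{10}-E_{10}^I)_x\Vert^{2}\le C\mu_0 N^{-2k}$, after which $\varepsilon_1\mu_0\le\varepsilon_1\mu_1\le C(\varepsilon_2+\sqrt{\varepsilon_1})$ from \eqref{cos:conclusions-1} delivers the required bound. The $y$-derivative is easier since $\partial_y E_{10}$ carries no $\mu_0$ factor, giving $\varepsilon_1\Vert(E_{10}-E_{10}^I)_y\Vert^{2}\le C\varepsilon_1 N^{-2k}\le C\sqrt{\varepsilon_1}N^{-2k}$. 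For $E_{20},E_{21}$ the argument runs with the third inequality of Lemma \ref{lem:Error-condition} replacing \eqref{lem:e-E10}, producing the parallel compound bound $h_{y,j}^{2r+1}e^{-2\delta y_j/\sqrt{\varepsilon_1}}\le C\varepsilon_1^{r+1/2}N^{-(2r+1)}$.

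For the two corrections I start from a sup-norm bound. A direct computation from \eqref{eq:mesh-x-points} gives $e^{-p\mu_1(1-x_{3N/4+1})}=(1+a\mu_1/N)^{\tau}\mu_1^{-\tau}$ with $a=4(1-\mu_1^{-1})\le 4$, and since $\mu_1\ge N$ by \eqref{eq:mu01-assume} this is $\le CN^{-\tau}$; hence $|E_{11}(x_{3N/4}^s,y)|\le CN^{-\tau}$ for every $s\ge 1$ and Lagrange stability forces $\Vert PE_{11}\Vert_\infty,\Vert\Theta E_{11}\Vert_\infty\le CN^{-\tau}$. For $PE_{11}$, whose support is a strip of $x$-width $O(N^{-1})$ (Lemma \ref{eq:mesh-x-sizes}), this already yields $\Vert PE_{11}\Vert^{2}\le CN^{-2\tau-1}$; the $x$-derivative is handled by the elementwise inverse inequality combined with $h_{x,3N/4}\ge C\mu_1^{-1}$ from Lemma \ref{eq:mesh-x-sizes} and \eqref{cos:conclusions-1}; and the $y$-derivative via the tensor-product factorisation $PE_{11}(x,y)=\sum_s\ell_s(x)G_s(y)$, where each $G_s$ is the 1D Lagrange interpolant of $E_{11}(x_{3N/4}^s,\cdot)$ in $y$ (a trace with all $y$-derivatives $O(N^{-\tau})$), giving $\Vert\partial_y PE_{11}\Vert^{2}\le Ch_{x,3N/4}N^{-2\tau}$. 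For $\Theta E_{11}$, the support shrinks to the two corner cells $\mathscr{T}_{3N/4,0}$ and $\mathscr{T}_{3N/4,N-1}$, and Lemma \ref{eq:mesh-y-sizes} contributes an extra factor $\sqrt{\varepsilon_1}$ from $h_{y,0}+h_{y,N-1}$. Anisotropic computation of $\partial_x(\ell_s\ell_0)$ and $\partial_y(\ell_s\ell_0)$ using $h_{y,0}\ge C\sqrt{\varepsilon_1}/N$ and $h_{x,3N/4}^{-1}\le C\mu_1$, again with \eqref{cos:conclusions-1}, delivers the claimed $\varepsilon_1^{1/4}N^{-\tau}$.

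The central difficulty is in part (a): a naive application of Lemma \ref{eq:Interpolation-error} leaves a factor $\varepsilon_1\mu_0^{2}$, which is not in general controlled by $\varepsilon_2+\sqrt{\varepsilon_1}$; only the compound mesh estimate above, which saves a single power of $\mu_0$, makes \eqref{cos:conclusions-1} just strong enough. In the correction bounds the analogous obstacle is to prevent an extra $\mu_1$ from appearing when differentiating in $x$, which is why the inverse inequality must be paired with the sharp lower bound $h_{x,3N/4}\ge C\mu_1^{-1}$ and the $y$-direction treated via the tensor-product decomposition rather than by a generic inverse estimate.
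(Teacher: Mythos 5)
Your overall strategy coincides with the paper's: split each layer component into its layer strip and the remainder, combine the anisotropic estimates of Lemma \ref{eq:Interpolation-error} with the half-power trick of Lemma \ref{lem:Error-condition} inside the strip (your compound estimate with $m=l$ and $m=l+1$ is the same as the paper's single application with $m=l+\frac{1}{2}$), and control $PE_{11}$ and $\Theta E_{11}$ through the smallness $|E_{11}(x_{3N/4}^s,\cdot)|\leq CN^{-\tau}$ and the energy norms of the basis functions. The one genuine gap is the claim that the contribution outside the exponential strip is ``negligible'' because $E_{10}=O(N^{-\tau})$ there. That disposes of the $L^2$ part, but not of the $H^1$ seminorm: on $[x_{\frac{N}{4}-1},x_N]\times[0,1]$ the interpolant $E_{10}^I$ must be differentiated, and the only available tool is the elementwise inverse inequality, which meets cells of width $h_{x,i}\sim\mu_1^{-1}N^{-1}$ near $x=1$ and $h_{y,j}\sim\sqrt{\varepsilon_1}N^{-1}$ near $y=0,1$. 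The paper's computation (see \eqref{eq:E10x-2} and its $y$-analogue) shows that this region contributes $\varepsilon_1\mu_1N^{2-2\tau}$ and $\varepsilon_1^{\frac{1}{2}}N^{2-2\tau}$ to $\varepsilon_1\vert E_{10}-E_{10}^I\vert_1^2$, which after $\tau\geq k+1$ and \eqref{cos:conclusions-1} are exactly of the size $(\varepsilon_2+\varepsilon_1^{\frac{1}{2}})N^{-2k}$ of the main term---admissible, but not negligible, and not obtainable without the inverse-inequality computation. Since you deploy precisely this mechanism (inverse inequality plus $h_{x,\frac{3N}{4}}\geq C\mu_1^{-1}$) for $PE_{11}$, the omission is repairable, but as written the first family of estimates is incomplete. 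A smaller slip: once the compound estimate has consumed all $2l+1$ powers of $h_{x,i}$, the sum over $i$ contributes a factor $N/4$ rather than $\sum_i h_{x,i}\leq 1$; the stated conclusion $C\mu_0N^{-2k}$ is nevertheless correct.

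Where you genuinely depart from the paper is in $\Vert \partial_y PE_{11}\Vert$. The paper uses the generic bound $\varepsilon_1 h_{x,\frac{3N}{4}}h_{y,j}^{-1}$ for the basis functions, which after summation leaves a term $\varepsilon_1^{\frac{1}{2}}N\cdot N^{-2\tau}$ that under \eqref{eq:varepsilon12-condition} is only $O(N^{-2\tau})$; your tensor-product factorisation $PE_{11}=\sum_s\ell_s(x)G_s(y)$, exploiting that the pure $y$-derivatives of $E_{11}$ in \eqref{eq:E11-property} carry no factor of $\mu_1$, gives $\Vert G_s'\Vert_{\infty}\leq CN^{-\tau}$ and hence the full $\varepsilon_1\Vert\partial_y PE_{11}\Vert^2\leq C\varepsilon_1 N^{-2\tau-1}$. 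This is sharper than the paper's own chain and is the cleaner route to the stated $N^{-\tau-\frac{1}{2}}$ (neither version affects the final theorem, which only needs $\Vert PE_{11}\Vert_E\leq CN^{-(k+1)}$). The remaining ingredients---the explicit evaluation of $e^{-p\mu_1(1-x_{\frac{3N}{4}+1})}$, the $O(N^{-1})$ width of the support of $PE_{11}$, and the extra $\sqrt{\varepsilon_1}$ from $h_{y,0}$ and $h_{y,N-1}$ for $\Theta E_{11}$---match the paper.
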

\begin{proof}
We only consider $\Vert E_{10}-E_{10}^I\Vert_E$, because the remaining terms could be analyzed in a similar way. Clearly, one has
$$\vert E_{10}-E_{10}^I\vert_1^2=\Vert (E_{10}-E_{10}^I)_x\Vert^2+\Vert (E_{10}-E_{10}^I)_y\Vert^2.$$ 

From \eqref{eq:E10-property},   Lemmas \ref{eq:mesh-y-sizes},  \ref{eq:Interpolation-error} and \eqref{lem:e-E10} with $m=l+\frac{1}{2}$ we could obtain
\begin{equation}\label{eq:E10x-1}
\begin{aligned}
&\Vert (E_{10}-E_{10}^I)_x\Vert_{[x_0,x_{\frac{N}{4}-1}]\times[0,1]}^2
=\sum_{i=0}^{\frac{N}{4}-2}\sum_{j=0}^{N-1}\Vert(E_{10}-E_{10}^I)_x\Vert_{\mathscr{T}_{i,j}}^2\\
&\leq C\sum_{i=0}^{\frac{N}{4}-2}\sum_{j=0}^{N-1}\sum\limits_{l+r=k}h_{x,i}^{2l}h_{y,j}^{2r}\left\Vert\frac{\partial^{k+1}E_{10}}{\partial x^{l+1}\partial y^r}\right\Vert_{\mathscr{T}_{i,j}}^2\\
&\leq C\sum_{i=0}^{\frac{N}{4}-2}\sum_{j=0}^{N-1}\sum\limits_{l+r=k}h_{x,i}^{2l}h_{y,j}^{2r}(\mu_0^{2(l+1)}e^{-2p\mu_0x_i}h_{x,i}h_{y,j})\\
&\leq C\sum_{i=0}^{\frac{N}{4}-2}\sum_{j=0}^{N-1}\sum\limits_{l+r=k}(\mu_0^{-2l-1}N^{-2l-1})\mu_0^{2(l+1)}h_{y,j}^{2r+1}\\
&\leq C\sum_{i=0}^{\frac{N}{4}-2}\sum_{j=0}^{N-1}\mu_0N^{-2(k+1)}\\
&\leq C\mu_0N^{-2k}.
\end{aligned}
\end{equation}
Note $\Vert (E_{10})_x\Vert_{[x_{\frac{N}{4}-1},x_N]\times[0,1]}\leq C\mu_0^{\frac{1}{2}}N^{-\tau}.$ Then one has 
\begin{equation}\label{eq:E10x-2}
\begin{aligned}
&\Vert (E_{10}-E_{10}^I)_x\Vert_{[x_{\frac{N}{4}-1},x_N]\times[0,1]}^2\leq C\Vert (E_{10})_x\Vert_{[x_{\frac{N}{4}-1},x_N]\times[0,1]}^2+C\Vert (E_{10}^I)_x\Vert_{[x_{\frac{N}{4}-1},x_N]\times[0,1]}^2\\
&\leq C\Vert (E_{10})_x\Vert_{[x_{\frac{N}{4}-1},x_N]\times[0,1]}^2+C\sum_{i=\frac{N}{4}-1}^{N-1}\sum_{j=0}^{N-1}\Vert (E_{10}^I)_x\Vert_{\mathscr{T}_{i,j}}^2\\
&\leq C\mu_0N^{-2\tau}+C\mu_1N^{2-2\tau}\\
&\leq C\mu_1N^{2-2\tau},
\end{aligned}
\end{equation}
where inverse inequality \cite[Theorem 3.2.6]{Ciarlet:2002-finite}, \eqref{eq:E10-property}, Lemmas \ref{eq:mesh-x-sizes} and \ref{eq:mesh-y-sizes} yield
\begin{equation*}
\begin{aligned}
&\sum_{i=\frac{N}{4}-1}^{N-1}\sum_{j=0}^{N-1}\Vert (E_{10}^I)_x\Vert_{\mathscr{T}_{i,j}}^2
\leq C\sum_{i=\frac{N}{4}-1}^{N-1}\sum_{j=0}^{N-1}h_{x,i}^{-2}\Vert E_{10}^I\Vert_{\mathscr{T}_{i,j}}^2\\
&\leq C\sum_{i=\frac{N}{4}-1}^{N-1}\sum_{j=0}^{N-1}h_{x,i}^{-2}\Vert E_{10}^I\Vert_{\infty,\mathscr{T}_{i,j}}^2h_{x,i}h_{y,j}\\
&\leq C\mu_1N^{2-2\tau}.
\end{aligned}
\end{equation*}

Similar to \eqref{eq:E10x-1}, we can get
\begin{equation}\label{eq:E10y-1}
\Vert (E_{10}-E_{10}^I)_y\Vert_{[x_0,x_{\frac{N}{4}-1}]\times[0,1]}^2\leq C\mu_0^{-1}N^{-2k}.
\end{equation}

Similar to \eqref{eq:E10x-2}, one has
\begin{equation}\label{eq:E10y-1}
\begin{aligned}
\Vert (E_{10}-E_{10}^I)_y\Vert_{[x_{\frac{N}{4}-1},x_N]\times[0,1]}^2
\leq C\varepsilon_1^{-\frac{1}{2}}N^{2-2\tau}.
\end{aligned}
\end{equation}
From \eqref{eq:E10x-1}--\eqref{eq:E10y-1}, \eqref{cos:conclusions-1}  and Lemma \ref{eq:Interpolation-L2} we can easily obtain 
\begin{equation*}
\begin{aligned}
\Vert E_{10}-E_{10}^I\Vert_E^2&\leq C(\varepsilon_1\mu_1N^{-2(k+1)}N^2+\varepsilon_1^{\frac{1}{2}}N^{-2(k+1)}N^2)+CN^{-2(k+1)}\\
&\leq C(\varepsilon_1\mu_1N^{-2k}+\varepsilon_1^{\frac{1}{2}}N^{-2k})+CN^{-2(k+1)}\\
&\leq C((\varepsilon_1^{\frac{1}{2}}+\varepsilon_2)N^{-2k}+\varepsilon_1^{\frac{1}{2}}N^{-2k})+CN^{-2(k+1)}\\
&\leq C(\varepsilon_1^{\frac{1}{2}}+\varepsilon_2)N^{-2k}+CN^{-2(k+1)},
\end{aligned}
\end{equation*}
i.e. 
\begin{equation*}\label{eq:E10--interpolation-energy-nome}
\Vert E_{10}-E_{10}^I\Vert_E\leq C(\varepsilon_1^{\frac{1}{2}}+\varepsilon_2)^{\frac{1}{2}}N^{-k}+CN^{-(k+1)}.
\end{equation*}

For $\Vert PE_{11}\Vert_E$ and $\Vert\Theta E_{11}\Vert_E$, Lemmas \ref{eq:mesh-x-sizes}, \ref{eq:mesh-y-sizes} , \eqref{cos:conclusions-1}  and \eqref{eq:varepsilon12-condition} yield
\begin{equation*}\label{eq:PE11-energy-nome}
\begin{aligned}
\Vert PE_{11}\Vert_E^2&\leq CN^{-2\tau}\sum_{s=1}^{k}\left(\sum_{j=0}^{N-1}\sum_{t=0}^{k-1}\Vert\theta_{\frac{3N}{4},j}^{s,t}\Vert_E^2+\Vert\theta_{\frac{3N}{4},N}^{s,0}\Vert_E^2\right)\\
&\leq CN^{-2\tau}\sum_{j=0}^{N-1}(\varepsilon_1h_{x,\frac{3N}{4}}^{-1}h_{y,j}+\varepsilon_1h_{x,\frac{3N}{4}}h_{y,j}^{-1}+h_{x,\frac{3N}{4}}h_{y,j})\\
&\leq CN^{-2\tau}(\varepsilon_1\mu_1+\varepsilon_1^{\frac{1}{2}}N+N^{-1}),\\
&\leq CN^{-2\tau}(\varepsilon_2+\varepsilon_1^{\frac{1}{2}}N+N^{-1})\\
&\leq CN^{-2\tau-1},
\end{aligned}
\end{equation*}
and
\begin{equation*}\label{eq:bound-E11-energy-nome}
\begin{aligned}
\Vert\Theta E_{11}\Vert_E^2&\leq CN^{-2\tau}\left(\sum_{s=1}^{k}\Vert\theta_{\frac{3N}{4},0}^{s,0}\Vert_E^2+\sum_{s=1}^{k}\Vert\theta_{\frac{3N}{4},N}^{s,0}\Vert_E^2\right)\\
&\leq CN^{-2\tau}(\varepsilon_1h_{x,\frac{3N}{4}}^{-1}h_{y,0}+\varepsilon_1h_{x,\frac{3N}{4}}h_{y,0}^{-1}+h_{x,\frac{3N}{4}}h_{y,0}\\
&+\varepsilon_1h_{x,\frac{3N}{4}}^{-1}h_{y,N-1}+\varepsilon_1h_{x,\frac{3N}{4}}h_{y,N-1}^{-1}+h_{x,\frac{3N}{4}}h_{y,N-1})\\
&\leq CN^{-2\tau}(\varepsilon_1\mu_1\varepsilon_1^{\frac{1}{2}}+\varepsilon_1N^{-1}\varepsilon_1^{-\frac{1}{2}}N+N^{-1}\varepsilon_1^{\frac{1}{2}})\\
&\leq CN^{-2\tau}(\varepsilon_1^{\frac{1}{2}}(\varepsilon_1^{\frac{1}{2}}+\varepsilon_2)+\varepsilon_1^{\frac{1}{2}})\\
&\leq C\varepsilon_1^{\frac{1}{2}}N^{-2\tau}.
\end{aligned}
\end{equation*}
\end{proof}
%

\begin{lemma}\label{eq:Interpolation-E-2}
For interpolation error estimates of corner layers we have
\begin{align*}
&\Vert E_i-E_i^I\Vert_E\leq CN^{-(k+1)} \quad i=31,32,33,34,\\
&\Vert PE_j\Vert_E\leq CN^{-\tau-\frac{1}{2}}\quad j=32,33,\\
&\Vert \Theta E_j\Vert_E\leq C\varepsilon_1^{\frac{1}{4}}N^{-\tau}\quad j=32,33.
\end{align*}
\end{lemma}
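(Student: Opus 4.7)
The proof follows the template of Lemma \ref{eq:Interpolation-E-energy}, combined with the three-way domain splitting introduced for $E_{31}$ in the proof of Lemma \ref{eq:Interpolation-L2}. I would work out the $E_{31}$ case in detail and deduce the bounds for $E_{32}, E_{33}, E_{34}$ by reflecting the coordinates. The $PE_j$ and $\Theta E_j$ estimates for $j=32,33$ are then essentially verbatim copies of the corresponding $j=11$ arguments.

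For $\Vert E_{31}-E_{31}^I\Vert_E$, decompose $\bar\Omega$ into the fine--fine corner $\Omega_1=[x_0,x_{\frac{N}{4}-1}]\times[y_0,y_{\frac{N}{4}-1}]$ and its two complementary strips $\Omega_2=[x_0,x_{\frac{N}{4}-1}]\times[y_{\frac{N}{4}-1},y_N]$ and $\Omega_3=[x_{\frac{N}{4}-1},x_N]\times[0,1]$. On $\Omega_1$, apply Lemma \ref{eq:Interpolation-error} cellwise using the derivative bound \eqref{eq:E31-property}. Invoking Lemma \ref{lem:Error-condition} (i.e.\ $h_{x,i}^{2l+1}e^{-2p\mu_0 x_i}\leq C\mu_0^{-(2l+1)}N^{-(2l+1)}$ and the analogous $y$ bound producing $\varepsilon_1^{(2r+1)/2}$) one finds, on $\Omega_1$,
\begin{align*}
\Vert(E_{31}-E_{31}^I)_x\Vert_{\Omega_1}^2&\leq C\mu_0\varepsilon_1^{1/2}N^{-2k},\\
\Vert(E_{31}-E_{31}^I)_y\Vert_{\Omega_1}^2&\leq C\mu_0^{-1}\varepsilon_1^{-1/2}N^{-2k},\\
\Vert E_{31}-E_{31}^I\Vert_{\Omega_1}^2&\leq C\mu_0^{-1}\varepsilon_1^{1/2}N^{-2(k+1)}.
\end{align*}
Multiplying the first two by $\varepsilon_1$ and invoking $\varepsilon_1\mu_1\leq C(\varepsilon_1^{1/2}+\varepsilon_2)\leq CN^{-1}$, $\mu_0^{-1}\leq N^{-1}$ and $\varepsilon_1^{1/2}\leq CN^{-1}$ (from \eqref{cos:conclusions-1}--\eqref{eq:varepsilon12-condition}), every contribution collapses to $CN^{-2(k+1)}$. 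On $\Omega_2$ and $\Omega_3$ the exponential decay of $E_{31}$ forces $|E_{31}|\leq CN^{-\tau}$ pointwise, so the triangle inequality and an inverse inequality on $E_{31}^I$---exactly the maneuver used to pass from \eqref{eq:E10x-1} to \eqref{eq:E10x-2}---yield an $O(N^{-\tau})\subseteq O(N^{-(k+1)})$ contribution, as $\tau\geq k+1$.

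For the correction terms with $j=32,33$, observe that both $E_{32}$ and $E_{33}$ inherit the decay $e^{-p\mu_1(1-x)}$ from $E_{11}$, so at each corrected node $x_{\frac{3N}{4}}^s$ (with $s\geq 1$) one still has $|E_j(x_{\frac{3N}{4}}^s,\cdot)|\leq CN^{-\tau}$; the additional parabolic factor in $y$ only strengthens the bound. Since the basis-function estimate $\Vert\theta_{\frac{3N}{4},j}^{s,t}\Vert_E^2\leq C(\varepsilon_1 h_{x,\frac{3N}{4}}^{-1}h_{y,j}+\varepsilon_1 h_{x,\frac{3N}{4}}h_{y,j}^{-1}+h_{x,\frac{3N}{4}}h_{y,j})$ depends only on the mesh, the calculations performed in Lemma \ref{eq:Interpolation-E-energy} for $\Vert PE_{11}\Vert_E$ and $\Vert\Theta E_{11}\Vert_E$ transfer verbatim and deliver $\Vert PE_j\Vert_E\leq CN^{-\tau-1/2}$ and $\Vert\Theta E_j\Vert_E\leq C\varepsilon_1^{1/4}N^{-\tau}$.

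The main obstacle is keeping track of the extra factor $\varepsilon_1^{-j/2}$ in the corner-layer derivative bounds \eqref{eq:E31-property}--\eqref{eq:E32-property}. This factor is absorbed by the $\varepsilon_1^{1/2}$ produced in Lemma \ref{lem:Error-condition} when one integrates $h_{y,j}^{m}$ against $e^{-\delta y/\sqrt{\varepsilon_1}}$, so that the net contribution of the parabolic exponential matches the pure $x$-layer case already handled in Lemma \ref{eq:Interpolation-E-energy}. Once this cancellation is observed and combined with the three controlling identities $\varepsilon_1\mu_1\leq CN^{-1}$, $\mu_0^{-1}\leq N^{-1}$, $\varepsilon_1^{1/2}\leq CN^{-1}$, the remaining work is a mechanical re-use of the arguments already in place.
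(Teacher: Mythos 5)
Your proposal follows essentially the same route as the paper: the same three-way splitting into $D_{0,0}=[x_0,x_{\frac{N}{4}-1}]\times[y_0,y_{\frac{N}{4}-1}]$ and its complement, the anisotropic estimates of Lemma \ref{eq:Interpolation-error} combined with Lemma \ref{lem:Error-condition} on the fine--fine corner, the triangle-plus-inverse-inequality maneuver on the rest, and the verbatim transfer of the $PE_{11}$, $\Theta E_{11}$ arguments to $j=32,33$. Your bounds on $D_{0,0}$ agree with the paper's \eqref{eq:E31x-1} and \eqref{eq:E31y-1}, and your observation that the $\varepsilon_1^{-j/2}$ factors are absorbed by the $\varepsilon_1^{(2r+1)/2}$ coming from the $y$-integration is exactly the mechanism the paper uses.

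One point of your accounting is too optimistic, however. On $\Omega_2\cup\Omega_3$ you claim an $O(N^{-\tau})$ contribution, but the inverse inequality applied to $E_{31}^I$ gives $\Vert(E_{31}^I)_x\Vert^2\leq C\mu_1N^{2-2\tau}$ (the paper's \eqref{eq:D2}), so after multiplying by $\varepsilon_1$ and using $\varepsilon_1\mu_1\leq C(\varepsilon_1^{1/2}+\varepsilon_2)\leq CN^{-1}$ the energy-norm contribution is only $O(N^{1/2-\tau})=O(N^{-(k+\frac12)})$ when $\tau=k+1$, not $O(N^{-(k+1)})$. This is why the paper's own proof stops at $\Vert E_{31}-E_{31}^I\Vert_E^2\leq C(\varepsilon_1^{1/2}+\varepsilon_2)N^{-2k}+CN^{-2(k+1)}$ --- the same form as Lemma \ref{eq:Interpolation-E-energy} and the form actually used in Theorem \ref{eq:interpolation-u-uI} --- rather than the cleaner $CN^{-2(k+1)}$ displayed in the lemma statement. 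So your derivation lands where the paper's does; only the final half power of $N$ you claim for the complement region is not delivered by the argument you describe.
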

\begin{proof}
We have omitted the proofs of $\Vert PE_j\Vert_E$ and $\Vert \Theta E_j\Vert_E$ with $j=32,33$ here, because they are similar to ones of $\Vert PE_{11}\Vert_E$ and $\Vert \Theta E_{11}\Vert_E$, respectively.
In order to analyze  $ \Vert (E_{31}-E_{31}^I)\Vert_E$, we set $D_{0,0}:=[x_0,x_{\frac{N}{4}-1}]\times[y_0,y_{\frac{N}{4}-1}]$. Then
\begin{equation*}\label{eq:E31x-2}
\begin{aligned}
&\Vert (E_{31}-E_{31}^I)_x\Vert_{\Omega\backslash D_{0,0}}^2\leq C\Vert (E_{31})_x\Vert_{\Omega\backslash D_{0,0}}^2+C\Vert (E_{31}^I)_x\Vert_{\Omega\backslash D_{0,0}}^2\\
&\leq C\Vert (E_{31})_x\Vert_{\Omega\backslash D_{0,0}}^2
+C\sum_{i=\frac{N}{4}-1}^{N-1}\sum_{j=0}^{N-1}\Vert (E_{31}^I)_x\Vert_{\mathscr{T}_{i,j}}^2
+C\sum_{i=0}^{\frac{N}{4}-1}\sum_{j=\frac{N}{4}-1}^{N-1}\Vert (E_{31}^I)_x\Vert_{\mathscr{T}_{i,j}}^2\\
&\leq D_1+D_2+D_3.
\end{aligned}
\end{equation*}
Inverse inequality, \eqref{eq:E31-property}, Lemmas \ref{eq:mesh-x-sizes} and \ref{eq:mesh-y-sizes} yield
\begin{equation}\label{eq:D1}
\begin{aligned}
D_1&=\Vert (E_{31})_x\Vert_{\Omega\backslash D_{0,0}}^2\leq\int_{x_0}^{x_{\frac{N}{4}-1}}\int_{y_{\frac{N}{4}-1}}^{y_N}\mu_0^2e^{-2p\mu_0x_i}e^{-\frac{2\delta y_j}{\sqrt{\varepsilon_1}}}dxdy\\
&+\int_{x_{\frac{N}{4}-1}}^{x_N}\int_{y_0}^{y_N}\mu_0^2e^{-2p\mu_0x_i}e^{-\frac{2\delta y_j}{\sqrt{\varepsilon_1}}}dxdy\\
&\leq C\varepsilon_1^{\frac{1}{2}}\mu_0N^{-2\tau}.
\end{aligned}
\end{equation}
and
\begin{equation}\label{eq:D2}
\begin{aligned}
D_2&=\sum_{i=\frac{N}{4}-1}^{N-1}\sum_{j=0}^{N-1}\Vert (E_{31}^I)_x\Vert_{\mathscr{T}_{i,j}}^2
\leq C\sum_{i=\frac{N}{4}-1}^{N-1}\sum_{j=0}^{N-1}h_{x,i}^{-2}\Vert E_{31}^I\Vert_{\mathscr{T}_{i,j}}^2\\
&\leq C\sum_{i=\frac{N}{4}-1}^{N-1}\sum_{j=0}^{N-1}h_{x,i}^{-2}\Vert E_{31}^I\Vert_{\infty,\mathscr{T}_{i,j}}^2h_{x,i}h_{y,j}\\
&\leq C\mu_1N^{2-2\tau}.
\end{aligned}
\end{equation}
Similar to $D_2$, we have 
\begin{equation}\label{eq:D3}
\begin{aligned}
D_3
\leq C\mu_0N^{2-2\tau}.
\end{aligned}
\end{equation}
Combination of \eqref{eq:D1}, \eqref{eq:D2}, and \eqref{eq:D3} yields 
\begin{equation}\label{eq:E31x-2}
\Vert (E_{31}-E_{31}^I)_x\Vert_{\Omega\backslash D_{0,0}}^2\leq C\mu_1N^{2-2\tau}.
\end{equation}
From \eqref{eq:E31-property}, Lemma \ref{eq:Interpolation-error}, \eqref{lem:e-E10} with $m=l+\frac{1}{2}$ and \eqref{lem:e-E20} with $m=r+\frac{1}{2}$ yield
\begin{equation}\label{eq:E31x-1}
\begin{aligned}
&\Vert (E_{31}-E_{31}^I)_x\Vert_{D_{0,0}}^2
=\sum_{i=0}^{\frac{N}{4}-2}\sum_{j=0}^{\frac{N}{4}-2}\Vert(E_{31}-E_{31}^I)_x\Vert_{\mathscr{T}_{i,j}}^2\\
&\leq C\sum_{i=0}^{\frac{N}{4}-2}\sum_{j=0}^{\frac{N}{4}-2}
\sum\limits_{l+r=k}h_{x,i}^{2l}h_{y,j}^{2r}\left\Vert\frac{\partial^{k+1}E_{31}}{\partial x^{l+1}\partial y^r}\right\Vert_{\mathscr{T}_{i,j}}^2\\
&\leq C\sum_{i=0}^{\frac{N}{4}-2}\sum_{j=0}^{\frac{N}{4}-2}\sum\limits_{l+r=k}h_{x,i}^{2l}h_{y,j}^{2r}(\varepsilon_1^{-r}\mu_0^{2(l+1)}e^{-2p\mu_0x_i}e^{-\frac{2\delta y_j}{\sqrt{\varepsilon_1}}}h_{x,i}h_{y,j})\\
&\leq C\sum_{i=0}^{\frac{N}{4}-2}\sum_{j=0}^{\frac{N}{4}-2}\sum\limits_{l+r=k}(\mu_0^{-2l-1}N^{-2l-1}\varepsilon_1^{r+\frac{1}{2}}N^{-2r-1})\mu_0^{2(l+1)}\varepsilon_1^{-r}\\
&\leq C\sum_{i=0}^{\frac{N}{4}-2}\sum_{j=0}^{\frac{N}{4}-2}\mu_0\varepsilon_1^{\frac{1}{2}}N^{-2(k+1)}\\
&\leq C\mu_0\varepsilon_1^{\frac{1}{2}}N^{-2k}.
\end{aligned}
\end{equation}

For $\Vert (E_{31}-E_{31}^I)_y\Vert$, we use the same processing technique as $\Vert (E_{31}-E_{31}^I)_x\Vert$ to obtain
\begin{equation}\label{eq:E31y-1}
\begin{aligned}
\Vert (E_{31}-E_{31}^I)_y\Vert_{D_{0,0}}^2
\leq C\mu_0^{-1}\varepsilon_1^{-\frac{1}{2}}N^{-2k},
\end{aligned}
\end{equation}
\begin{equation}\label{eq:E31y-2}
\begin{aligned}
\Vert (E_{31}-E_{31}^I)_y\Vert_{\Omega\backslash D_{0,0}}^2
\leq C\varepsilon_1^{-\frac{1}{2}}N^{2-2\tau}.
\end{aligned}
\end{equation}
From \eqref{eq:E31x-2}--\eqref{eq:E31y-2} we can easily obtain 
\begin{equation*}\label{eq:H1-nome}
\vert E_{31}-E_{31}^I\vert_1^2\leq C \mu_1N^{-2k}+\varepsilon_1^{-\frac{1}{2}}N^{-2k}.
\end{equation*}
By combining Lemma \ref{eq:Interpolation-L2} and \eqref{cos:conclusions-1} we get 
\begin{equation*}\label{eq:E31-E31I-energy-nome}
\begin{aligned}
\Vert E_{31}-E_{31}^I\Vert_E^2&\leq C\varepsilon_1(\mu_1N^{-2k}+\varepsilon_1^{-\frac{1}{2}}N^{-2k})+CN^{-2(k+1)}\\
&\leq C(\varepsilon_1\mu_1N^{-2k}+\varepsilon_1^{\frac{1}{2}}N^{-2k})+CN^{-2(k+1)}\\
&\leq C((\varepsilon_1^{\frac{1}{2}}+\varepsilon_2)N^{-2k}+\varepsilon_1^{\frac{1}{2}}N^{-2k})+CN^{-2(k+1)}\\
&\leq C(\varepsilon_1^{\frac{1}{2}}+\varepsilon_2)N^{-2k}+CN^{-2(k+1)}.
\end{aligned}
\end{equation*}

Similarly, we have 
$$\Vert E_{i}-E_{i}^I\Vert_E^2\leq C(\varepsilon_1^{\frac{1}{2}}+\varepsilon_2)N^{-2k}+CN^{-2(k+1)}\qquad i=32,33,34.$$

\end{proof}

When calculating the interpolation error of $\Vert (E_i-E_i^I)_x\Vert(i=10,20,21,31,34)$, we use  a different technique. 
\begin{lemma}\label{interpolation-H-nome}
Assume $\tau\geq k+1$. On Bakhvalov-type mesh $\mathcal{T}$, one has
\begin{align*}
&\Vert (E_{10}-E_{10}^I)_x\Vert\leq C\mu_0N^{-(k+1)},\\
&\Vert (E_i-E_i^I)_x\Vert\leq C\varepsilon_1^{\frac{1}{4}}N^{-k}+CN^{-(k+1)}\quad i=20,21,\\
&\Vert (E_j-E_j^I)_x\Vert\leq C\mu_0^{\frac{1}{2}}\varepsilon_1^{\frac{1}{4}}N^{-k}+C\mu_0N^{-(k+1)}\quad j=31,34.
\end{align*}
\end{lemma}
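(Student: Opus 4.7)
The plan is to mimic the splitting strategy of Lemmas~\ref{eq:Interpolation-E-energy} and \ref{eq:Interpolation-E-2}: for each layer component decompose $\Omega$ into a layer-adapted fine subregion (where the exponential factor is $O(1)$ and the mesh is finely graded) and its complement (where the layer is exponentially small). On the fine subregion I will apply the anisotropic estimate of Lemma~\ref{eq:Interpolation-error} together with Lemma~\ref{lem:Error-condition}; on the coarse subregion I will use the triangle inequality, a direct exponential bound on $(E_i)_x$, and an inverse inequality on $E_i^I$ as in the derivation of \eqref{eq:E10x-2}. The key sharpening compared with Lemmas~\ref{eq:Interpolation-E-energy}-\ref{eq:Interpolation-E-2} is to integrate the exponential factors over each cell rather than to bound them pointwise, i.e.\ $\int_{x_i}^{x_{i+1}} e^{-2p\mu_0 x}\,dx \le (2p\mu_0)^{-1} e^{-2p\mu_0 x_i}$, and similarly $\int_{y_j}^{y_{j+1}} e^{-2\delta y/\sqrt{\varepsilon_1}}\,dy \le (2\delta)^{-1}\sqrt{\varepsilon_1}\, e^{-2\delta y_j/\sqrt{\varepsilon_1}}$, which trades a factor $h$ for a factor $\mu_0^{-1}$ (resp.\ $\sqrt{\varepsilon_1}$) in the derivative norm.

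For $\Vert (E_{10}-E_{10}^I)_x\Vert$ I will split $\Omega = D_f \cup D_c$ with $D_f = [x_0,x_{N/4-1}]\times[0,1]$. On $D_f$ I apply Lemma~\ref{eq:Interpolation-error} cell-wise, bound $\Vert \partial_x^{l+1}\partial_y^r E_{10}\Vert_{\mathscr{T}_{i,j}}^2$ via \eqref{eq:E10-property} and the sharpened cell integral above, and then sum using \eqref{lem:e-E10} with $m=k+1$ (permitted because $\tau\ge k+1$), together with $h_{y,j}\le CN^{-1}$ from Lemma~\ref{eq:mesh-y-sizes} and $\sum_i h_{x,i}e^{-2p\mu_0 x_i}\le C\mu_0^{-1}$. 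On $D_c$ the estimate $|E_{10}(x,y)|\le e^{-p\mu_0 x_{N/4-1}}\le C N^{-\tau}$ gives $\Vert (E_{10})_x\Vert_{D_c}^2\le C\mu_0^2 N^{-2\tau}$ by direct integration, while the inverse-inequality argument used in \eqref{eq:E10x-2} yields $\Vert (E_{10}^I)_x\Vert_{D_c}^2\le C\mu_1 N^{2-2\tau}$. Since $\tau\ge k+1$, the coarse contribution is bounded by $C\mu_0^2 N^{-2(k+1)}$, giving the stated $C\mu_0 N^{-(k+1)}$.

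For $\Vert (E_{20}-E_{20}^I)_x\Vert$ and $\Vert (E_{21}-E_{21}^I)_x\Vert$ I carry out the same procedure but split in $y$: $D_f = [0,1]\times [y_0,y_{N/4-1}]$ (or the analogue near $y=1$). The sharpened integration in $y$ replaces $h_{y,j}$ by $\sqrt{\varepsilon_1}$, producing the prefactor $\varepsilon_1^{1/4}$ on the fine region; Lemma~\ref{lem:Error-condition} \eqref{lem:e-E20} with $m=k+1$ then converts this into $C\varepsilon_1^{1/4}N^{-k}$, while the coarse region again contributes $CN^{-(k+1)}$ (no $\mu_0$ factor here because $(E_{20})_x$ has no $\mu_0$-magnification). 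For $\Vert (E_j-E_j^I)_x\Vert$ with $j=31,34$ I use the three-piece split of the proof of Lemma~\ref{eq:Interpolation-L2}: the ``fine-fine'' corner contributes the tensor-product combination $\mu_0^{1/2}\varepsilon_1^{1/4}N^{-k}$ (both exponentials integrated sharply, using both \eqref{lem:e-E10} and \eqref{lem:e-E20}), and the two remaining coarse slabs are controlled by $C\mu_0 N^{-(k+1)}$ exactly as for $E_{10}$.

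\textbf{Main obstacle.} The delicate point is to recover the clean $\mu_0 N^{-(k+1)}$ (instead of the weaker $\mu_1^{1/2}N^{1-\tau}$ appearing in \eqref{eq:E10x-2} and in the proof of Lemma~\ref{eq:Interpolation-E-2}) on the coarse subregion: one must avoid losing a factor of $\mu_1$ from the transition cells $i=\tfrac{N}{4}-1$ and $i=\tfrac{3N}{4}$. Here the argument must use that in each coarse cell the $L^\infty$ bound $\Vert E_i\Vert_{\infty,\mathscr{T}_{i,j}}\le C\mu_0^{-\tau}+CN^{-\tau}$ is driven by $\mu_0$ (not $\mu_1$) because the layer in question is at $x=0$, together with Lemma~\ref{eq:mesh-x-sizes} (and, near the right transition for $E_{34}$, Lemma~\ref{eq:better-mesh} with $\eta$ chosen small enough). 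The bookkeeping of pairing the correct power of $\mu_0^{-1}$ coming from the sharp exponential integration with the correct power of $N$ coming from Lemma~\ref{lem:Error-condition} (with $m=k+1$) in every one of the $l+r=k$ anisotropic terms is what produces the specific $N^{-(k+1)}$ exponent rather than the generic $N^{-k}$ one, and this is where the proof has to be executed most carefully.
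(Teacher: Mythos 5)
Your decomposition into a fine layer region plus a coarse complement, and the three-piece split for the corner layers, coincides with the paper's, and your fine-region treatment is essentially its \eqref{eq:E10x-1}. The genuine gap is on the coarse region. You propose to keep the $L^2$ inverse-inequality argument of \eqref{eq:E10x-2}, obtaining $\Vert(E_{10}^I)_x\Vert^2_{D_c}\le C\mu_1N^{2-2\tau}$, and then assert that ``since $\tau\ge k+1$'' the coarse contribution is bounded by $C\mu_0^2N^{-2(k+1)}$. That implication is false: it would require $\mu_1\le C\mu_0^2N^{2\tau-2k-4}$, which for $\tau=k+1$ reads $\mu_1\le C\mu_0^2N^{-2}$, whereas \eqref{eq:mu01-assume} only gives $\mu_0\ge N$ and $\mu_1$ can be arbitrarily large compared with $\mu_0$ (take $\varepsilon_2\sim N^{-1}$ and $\varepsilon_1\downarrow 0$, so that $\mu_0\sim\varepsilon_2^{-1}\sim N$ while $\mu_1\sim\varepsilon_2\varepsilon_1^{-1}\to\infty$). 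A leftover factor $\mu_1^{1/2}$ is fatal here because in \eqref{eq:III} this lemma is multiplied only by $\varepsilon_2$, and $\varepsilon_2\mu_1^{1/2}$ need not be small. This is exactly why the paper announces before the lemma that ``we use a different technique'': in \eqref{eq:E10x-2-b} it abandons the inverse inequality and instead uses the triangle inequality in $L^\infty$ together with the $W^{1,\infty}$-stability of the Lagrange interpolant, $\Vert(E_{10}^I)_x\Vert_{\infty,D_c}\le C\Vert(E_{10})_x\Vert_{\infty,D_c}$, so that no mesh width is ever inverted and \eqref{eq:E10-property} alone gives the bound $C\mu_0N^{-\tau}$ on the whole coarse region (and likewise for $E_{31}$, $E_{34}$).

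Your ``main obstacle'' paragraph shows you sensed that the $\mu_1$ factor must not appear, but the remedy you sketch --- local bounds $\Vert E_i\Vert_{\infty,\mathscr{T}_{i,j}}\le C\mu_0^{-\tau}+CN^{-\tau}$ fed into Lemma \ref{eq:mesh-x-sizes} and Lemma \ref{eq:better-mesh} --- still pairs an $h_{x,i}^{-1}$ with those bounds, and near $x=1$ one has $h_{x,i}^{-1}\sim\mu_1N$, which the uniform factor $N^{-\tau}$ cannot absorb; nor does the pointwise decay $e^{-p\mu_0 x}\le e^{-cN}$ there uniformly dominate $\mu_1$, since $\varepsilon_1$ may be arbitrarily small for fixed $N$. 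A smaller bookkeeping problem: \eqref{lem:e-E10} with $m=k+1$ controls $h_{x,i}^{k+1}e^{-p\mu_0x_i}$, but the anisotropic terms of Lemma \ref{eq:Interpolation-error} carry $h_{x,i}^{2l}$ with $l\le k$, so the exponents do not match as written; the paper applies \eqref{lem:e-E10} with $m=l+\tfrac12$ instead. The plan therefore has to be repaired on the coarse subregion before it establishes the stated bounds.
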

\begin{proof}
Here we only prove the conclusion of the boundary layer at $x=0$,  because the proof for other boundary layers is similar. The analysis of the two corner layers is also similar, so we only present the proof of one of them.

For $\Vert (E_{10}-E_{10}^I)_x\Vert$, 
on the interval $[x_0,x_{\frac{N}{4}-1}]\times[0,1]$, using \eqref{eq:E10x-1} to get 
\begin{equation}\label{eq:E10x-1-b}
\begin{aligned}
\Vert (E_{10}-E_{10}^I)_x\Vert_{[x_0,x_{\frac{N}{4}-1}]\times[0,1]}^2
\leq C\mu_0N^{-2k}.
\end{aligned}
\end{equation}
But, on the interval $[x_{\frac{N}{4}-1},x_N]\times[0,1]$, instead of using the inverse inequality in \eqref{eq:E10x-2}, we use the triangle inequality and \eqref{eq:E10-property} yield 
\begin{equation}\label{eq:E10x-2-b}
\begin{aligned}
&\Vert (E_{10}-E_{10}^I)_x\Vert_{[x_{\frac{N}{4}-1},x_N]\times[0,1]}\\
&\leq \Vert (E_{10})_x\Vert_{\infty,[x_{\frac{N}{4}-1},x_N]\times[0,1]}+\Vert (E_{10}^I)_x\Vert_{\infty,[x_{\frac{N}{4}-1},x_N]\times[0,1]}\\
&\leq C\Vert (E_{10})_x\Vert_{\infty,[x_{\frac{N}{4}-1},x_N]\times[0,1]}\\
&\leq C\mu_0N^{-\tau}.
\end{aligned}
\end{equation}
From \eqref{eq:E10x-1-b} and \eqref{eq:E10x-2-b} we get $\Vert (E_{10}-E_{10}^I)_x\Vert\leq C\mu_0N^{-(k+1)}.$

For  $\Vert (E_{31}-E_{31}^I)_x\Vert$, we  decompose it as follows
\begin{equation*}\label{eq:d-E31}
\begin{aligned}
&\Vert (E_{31}-E_{31}^I)_x\Vert\leq \Vert (E_{31}-E_{31}^I)_x\Vert_{[x_0,x_{\frac{N}{4}-1}]\times[y_0,y_{\frac{N}{4}-1}]}\\
&+\Vert (E_{31}-E_{31}^I)_x\Vert_{[x_0,x_{\frac{N}{4}-1}]\times[y_{\frac{N}{4}-1},y_N]}+\Vert (E_{31}-E_{31}^I)_x\Vert_{[x_{\frac{N}{4}-1},x_N]\times[0,1]}\\
&\leq C\mu_0^{\frac{1}{2}}\varepsilon_1^{\frac{1}{4}}N^{-k}+C\mu_0N^{-(k+1)},
\end{aligned}
\end{equation*}
 where similar to \eqref{eq:E31x-1}, we have 
$$\Vert (E_{31}-E_{31}^I)_x\Vert_{[x_0,x_{\frac{N}{4}-1}]\times[y_0,y_{\frac{N}{4}-1}]}^2
\leq \mu_0\varepsilon_1^{\frac{1}{2}}N^{-2k},$$
and similar to \eqref{eq:E10x-2-b} we obtain
\begin{align*}
\Vert (E_{31}-E_{31}^I)_x\Vert_{[x_0,x_{\frac{N}{4}-1}]\times[y_{\frac{N}{4}-1},y_N]}+\Vert (E_{31}-E_{31}^I)_x\Vert_{[x_{\frac{N}{4}-1},x_N]\times[0,1]}\leq \mu_0N^{-\tau}.
\end{align*}

\end{proof}

\begin{theorem}\label{eq:interpolation-u-uI}
Assume $\tau\geq k+1$. On the Bakhvalov-type mesh $\mathcal{T}$, one has 
\begin{align*}
&\sum\limits_i\Vert \pi_iE_i-E_i\Vert\leq CN^{-(k+1)}, \qquad i=11,32,33,\\ 
&\Vert u-u^I\Vert_E+\Vert u-\Pi u\Vert_E\leq C(\varepsilon_1^{\frac{1}{2}}+\varepsilon_2)^{\frac{1}{2}}N^{-k}+N^{-(k+1)}.
\end{align*}
\end{theorem}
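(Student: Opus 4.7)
The plan is to use linearity of the interpolation operators and the triangle inequality together with the solution decomposition from Assumption~\ref{eq:priori estimates of u}, reducing everything to per-component estimates already supplied by the preceding lemmas.

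For the first claim, equation \eqref{eq:define-piE} gives $\pi_i E_i - E_i = (E_i^I - E_i) - PE_i + \Theta E_i$ for $i = 11, 32, 33$, so by triangle inequality it suffices to bound three pieces in $L^2$. Lemma~\ref{eq:Interpolation-L2} handles $\|E_i^I - E_i\| \leq CN^{-(k+1)}$ directly. For the remaining two I use the trivial inequality $\|v\| \leq \|v\|_E$ combined with the energy bounds $\|PE_i\|_E \leq CN^{-\tau - 1/2}$ and $\|\Theta E_i\|_E \leq C\varepsilon_1^{1/4}N^{-\tau}$ of Lemmas~\ref{eq:Interpolation-E-energy} and~\ref{eq:Interpolation-E-2}; since $\tau \geq k+1$ and $\varepsilon_1$ is bounded, each is $\leq CN^{-(k+1)}$, and summing over $i$ gives the result.

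For $\|u - u^I\|_E$, I write $u - u^I = (S - S^I) + \sum_i(E_i - E_i^I)$ where $i$ ranges over the eight layer components, and bound each summand. Each layer term is directly controlled by Lemma~\ref{eq:Interpolation-E-energy} or Lemma~\ref{eq:Interpolation-E-2} by $C(\varepsilon_1^{1/2} + \varepsilon_2)^{1/2}N^{-k} + CN^{-(k+1)}$. For the smooth part $S - S^I$ I plan to apply Lemma~\ref{eq:Interpolation-error} together with $|\partial^{k+1}S| \leq C$ and the uniform mesh-size estimate $h_{x,i}, h_{y,j} \leq CN^{-1}$, which follows from Lemmas~\ref{eq:mesh-x-sizes} and~\ref{eq:mesh-y-sizes} using $\mu_0^{-1}, \mu_1^{-1} \leq N^{-1}$ from \eqref{eq:mu01-assume}; this yields $\|S - S^I\| \leq CN^{-(k+1)}$ and $|S - S^I|_1 \leq CN^{-k}$, after which the gradient contribution in the energy norm is absorbed by $\varepsilon_1 \leq c_0 N^{-2}$ from \eqref{eq:varepsilon12-condition}.

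For $\|u - \Pi u\|_E$, the identity \eqref{eq:piu} gives
\[
u - \Pi u = (u - u^I) + \sum_{i = 11, 32, 33}(PE_i - \Theta E_i),
\]
so triangle inequality reduces matters to the bound just obtained together with three $\|PE_i\|_E$ and three $\|\Theta E_i\|_E$ terms, each already controlled by Lemmas~\ref{eq:Interpolation-E-energy} and~\ref{eq:Interpolation-E-2} and bounded by $CN^{-(k+1)}$ using $\tau \geq k + 1$ and $\varepsilon_1^{1/4} \leq C$. All real work has been done in the preceding lemmas, so I do not expect any serious obstacle; the only care required is in tracking that $\varepsilon_1^{1/2}N^{-k}$ from the smooth part is absorbed into $N^{-(k+1)}$ via \eqref{eq:varepsilon12-condition}, and that the auxiliary bounds $CN^{-\tau - 1/2}$ and $C\varepsilon_1^{1/4}N^{-\tau}$ are harmless under $\tau \geq k+1$.
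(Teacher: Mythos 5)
Your proposal is correct and follows essentially the same route as the paper: triangle inequality on the decomposition $\pi_iE_i-E_i=(E_i^I-E_i)-PE_i+\Theta E_i$ with Lemmas \ref{eq:Interpolation-L2}, \ref{eq:Interpolation-E-energy} and \ref{eq:Interpolation-E-2}, the standard interpolation bound for $S-S^I$ on the quasi-uniform mesh (using $\mu_0^{-1}\leq N^{-1}$ and $\varepsilon_1\leq c_0N^{-2}$), and the identity \eqref{eq:piu} for $\Vert u-\Pi u\Vert_E$. The only cosmetic difference is that you pass through $\Vert v\Vert\leq\Vert v\Vert_E$ to reuse the energy-norm bounds on $PE_i$ and $\Theta E_i$, which is exactly what the paper's appeal to "the proof of Lemma \ref{eq:Interpolation-E-energy}" amounts to.
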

\begin{proof}
From  Lemma \ref{eq:Interpolation-L2} and the proof of Lemma \ref{eq:Interpolation-E-energy} we could obtain 
$$\Vert \pi E_{11}-E_{11}\Vert \leq \Vert E_{11}-E_{11}\Vert+\Vert PE_{11}\Vert++\Vert \Theta E_{11}\Vert \leq CN^{-(k+1)}.$$
Similarly we could get estimates for $\Vert \pi E_i-E_i\Vert$ with $i=32,33.$

By a simple calculation we get $\Vert S-S^I\Vert\leq CN^{-(k+1)}$ and $\vert S-S^I\vert_1\leq CN^{-k}$. Then by combining Lemmas \ref{eq:Interpolation-L2} ,\ref{eq:Interpolation-E-energy} and \ref{eq:Interpolation-E-2} we prove $\Vert u-u^I\Vert_E\leq C(\varepsilon_1^{\frac{1}{2}}+\varepsilon_2)^{\frac{1}{2}}N^{-k}+N^{-(k+1)}$.  Finally using \eqref{eq:piu}, we have
$\Vert u-\Pi u\Vert_E\leq C(\varepsilon_1^{\frac{1}{2}}+\varepsilon_2)^{\frac{1}{2}}N^{-k}+N^{-(k+1)}.$ 
\end{proof}

\section{Uniform convergence}

Set $\chi:=\Pi u-u^N$. Using \eqref{eq:bilinear form}, \eqref{eq:coercivity}, \eqref{eq:Interpolation-u}, integration by parts  and Galerkin's orthogonality  we have
\begin{equation*}\label{eq:uniform convergence-chi}
\begin{aligned}
&\alpha\Vert\chi\Vert_E^2\leq a(\chi,\chi)=a(\Pi u-u,\chi)\\
&=\varepsilon_1\int_{\Omega}\nabla(\Pi u-u)\nabla\chi dxdy+\varepsilon_2\int_{\Omega}(S^I-S)_x\chi dxdy\\
&+\sum\limits_{l=10,20,21,31,34}\varepsilon_2\int_{\Omega}(E_l^I-E_l)_x\chi dxdy-\varepsilon_2\int_{\Omega}(\pi_{11}E_{11}-E_{11})b\chi_xdxdy\\
&-\sum\limits_{i=32,33}\varepsilon_2\int_{\Omega}(\pi_iE_i-E_i)b\chi_xdxdy
-\sum\limits_{j=11,32,33}\varepsilon_2\int_{\Omega}(\pi_jE_j-E_j)b_x\chi dxdy+\int_{\Omega}c(\Pi u-u)\chi dx dy\\
&=:I+II+III+IV+V+VI+VII.
\end{aligned}
\end{equation*}

Theorem \ref{eq:interpolation-u-uI}  yields
\begin{equation}\label{eq:I-VI-VII}
\begin{aligned}
\vert (I+VII)+VI\vert&
\leq C\Vert \Pi u-u\Vert_E\Vert\chi\Vert_E+\sum\limits_{j=11,32,33}\Vert \pi_jE_j-E_j\Vert\Vert\chi\Vert\\
&\leq C((\varepsilon_1^{\frac{1}{2}}+\varepsilon_2)^{\frac{1}{2}}N^{-k}+N^{-(k+1)})\Vert\chi\Vert_E.
\end{aligned}
\end{equation}

Using \eqref{cos:conclusions-2}, Lemma \ref{interpolation-H-nome} and H\"{o}lder inequality we can get
\begin{equation}\label{eq:III}
\begin{aligned}
\vert II+III\vert&\leq C(\varepsilon_2\Vert(S^I-S)_x\Vert\Vert\chi\Vert+\varepsilon_2\sum\limits_{l=10,20,21,31,34}\Vert(E_l^I-E_l)_x\Vert\Vert\chi\Vert)\\
&\leq C\varepsilon_2(N^{-k}+\mu_0N^{-(k+1)}+\mu_0^{\frac{1}{2}}\varepsilon_1^{\frac{1}{4}}N^{-k})\Vert\chi\Vert\\
&\leq C(\varepsilon_2N^{-k}+N^{-(k+1)}+\varepsilon_2^{\frac{1}{2}}\varepsilon_1^{\frac{1}{4}}N^{-k})\Vert\chi\Vert\\
&\leq C(\varepsilon_2N^{-k}+N^{-(k+1)}+\varepsilon_2^{\frac{1}{2}}\varepsilon_1^{\frac{1}{4}}N^{-k})\Vert\chi\Vert_E.
\end{aligned}
\end{equation}

For $IV$ and $V$,  we have the following two lemmas.
\begin{lemma}\label{eq:convection item-IV}
Assuming that $\tau\ge k+1$, on the Bakhvalov-type mesh $\mathcal{T}$, one has 
\begin{equation*}
\vert IV\vert\leq C(\varepsilon_2N^{-k}+\varepsilon_2^{\frac{1}{2}}N^{-(k+1)})\Vert\chi\Vert_E.
\end{equation*}
\end{lemma}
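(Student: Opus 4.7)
The overall plan is to unpack the definition $\pi_{11}E_{11}-E_{11}=(E_{11}^I-E_{11})-PE_{11}+\Theta E_{11}$ from \eqref{eq:define-piE}, split $IV$ into three sub-integrals $IV_a$, $IV_b$, $IV_c$, and bound each by Cauchy--Schwarz on a well-chosen subregion, absorbing the resulting factors through \eqref{cos:conclusions-1}--\eqref{cos:conclusions-2} and the sharp mesh estimate of Lemma \ref{eq:better-mesh}.

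For $IV_a=-\varepsilon_2\int_{\Omega}(E_{11}^I-E_{11})\,b\,\chi_x\,dxdy$, I would decompose $\Omega$ into the fine-mesh subregion $\Omega_f=[x_{3N/4},1]\times[0,1]$ adjacent to the exponential layer and its complement $\Omega_c$. On $\Omega_c$ the choice of transition point together with \eqref{eq:E11-property} gives $|E_{11}|+|E_{11}^I|\le CN^{-\tau}$, so Cauchy--Schwarz with $\|\chi_x\|\le\varepsilon_1^{-1/2}\|\chi\|_E$ and the identity $\varepsilon_2(\varepsilon_1\mu_1)^{-1/2}\le C\varepsilon_2^{1/2}$ from \eqref{cos:conclusions-2} produces a $\varepsilon_2^{1/2}N^{-(k+1)}$ contribution. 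On $\Omega_f$, the anisotropic Lemma \ref{eq:Interpolation-error} combined with \eqref{eq:E11-property} and Lemma \ref{lem:Error-condition} control $\|E_{11}^I-E_{11}\|_{\mathscr{T}}$ elementwise; pairing with the inverse inequality $\|\chi_x\|_{\mathscr{T}}\le Ch_{x,i}^{-1}\|\chi\|_{\mathscr{T}}$ and using $h_{x,i}\sim\mu_1^{-1}$ in the layer, the summation yields the $\varepsilon_2 N^{-k}$ piece once \eqref{cos:conclusions-1}--\eqref{cos:conclusions-2} are invoked.

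For $IV_b=\varepsilon_2\int_{\Omega}PE_{11}\,b\,\chi_x$ and $IV_c=-\varepsilon_2\int_{\Omega}\Theta E_{11}\,b\,\chi_x$, whose integrands are supported in the narrow transition strip $[x_{3N/4},x_{3N/4+2}]\times[0,1]$, I would exploit that $|E_{11}(x_{3N/4}^s,\cdot)|\le C\mu_1^{-\tau}\exp(p\mu_1 h_{x,3N/4})\le CN^{-\tau}$ once Lemma \ref{eq:better-mesh} is invoked with $\eta=1$. The standard $L^2$ basis-function estimates, exactly as in the proof of Lemma \ref{eq:Interpolation-E-energy}, then yield $\|PE_{11}\|\le CN^{-\tau-1/2}$ and $\|\Theta E_{11}\|\le C\varepsilon_1^{1/4}N^{-\tau}$. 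Applying Cauchy--Schwarz with $\|\chi_x\|\le\varepsilon_1^{-1/2}\|\chi\|_E$ and \eqref{cos:conclusions-2} one more time produces contributions absorbed into $\varepsilon_2^{1/2}N^{-(k+1)}\|\chi\|_E$.

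The principal obstacle is the factor $\varepsilon_2/\sqrt{\varepsilon_1}$ that appears whenever the energy bound $\|\chi_x\|\le\varepsilon_1^{-1/2}\|\chi\|_E$ is used: writing it as $\varepsilon_2^{1/2}\mu_1^{1/2}\cdot[\varepsilon_2(\varepsilon_1\mu_1)^{-1/2}]$, the first factor $\mu_1^{1/2}$ is unbounded in the singular limit $\varepsilon_1\to 0$. Each sub-estimate therefore has to be equipped with a compensating $\mu_1^{-1/2}$ coming either from the layer mesh size $h_{x,i}\sim\mu_1^{-1}$ or from the refined estimate $h_{x,3N/4}\le C\mu_1^{\eta-1}N^{-\eta}$ of Lemma \ref{eq:better-mesh}, so that after applying $\varepsilon_2(\varepsilon_1\mu_1)^{-1/2}\le C\varepsilon_2^{1/2}$ the constants are genuinely independent of $\varepsilon_1$ and $\mu_1$. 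Orchestrating this cancellation in every subregion is the technical heart of the proof and is precisely where the refined mesh analysis of Lemma \ref{eq:better-mesh} is essential.
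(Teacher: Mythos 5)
Your plan to split $IV$ into $IV_a+IV_b+IV_c$ according to $\pi_{11}E_{11}-E_{11}=(E_{11}^I-E_{11})-PE_{11}+\Theta E_{11}$ and to bound each piece separately contains a fatal gap: it discards the exact cancellation between $E_{11}^I$ and $PE_{11}$ on the transition cells, which is the entire reason the corrected interpolant $\pi_{11}$ exists. Consider $IV_b=\varepsilon_2\int_\Omega PE_{11}\,b\,\chi_x$. You correctly get $\Vert PE_{11}\Vert\le CN^{-\tau-\frac12}$, but pairing with $\Vert\chi_x\Vert\le\varepsilon_1^{-1/2}\Vert\chi\Vert_E$ leaves the factor $\varepsilon_2\varepsilon_1^{-1/2}N^{-\tau-\frac12}$, and $\varepsilon_2\varepsilon_1^{-1/2}$ is not bounded by any power of $N$ (take $\varepsilon_2=N^{-1}$ and $\varepsilon_1$ arbitrarily small). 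To invoke $\varepsilon_2(\varepsilon_1\mu_1)^{-1/2}\le C\varepsilon_2^{1/2}$ you need a full factor $\mu_1^{-1/2}$, but the only source on the support of $PE_{11}$ is $h_{x,\frac{3N}{4}}^{1/2}\le C\mu_1^{(\eta-1)/2}N^{-\eta/2}$, which always leaves an uncontrolled $\mu_1^{\eta/2}$ (equivalently, the sharp form $h_{x,\frac{3N}{4}}\le C\mu_1^{-1}\ln(\mu_1/N)$ leaves $(\ln\mu_1)^{1/2}$, unbounded as $\varepsilon_1\to0$ for fixed $N$); switching to the inverse inequality costs $h_{x,\frac{3N}{4}}^{-1}\sim\mu_1$, and $\varepsilon_2\mu_1$ is unbounded --- \eqref{cos:conclusions-2} only controls $\varepsilon_2\mu_0$. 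The same obstruction kills $IV_a$ on the single cell column $i=\frac{3N}{4}$: estimate \eqref{lem:e-E11} only covers $i\ge\frac{3N}{4}+1$, and on that column $\Vert E_{11}^I-E_{11}\Vert\sim N^{-\tau}h_{x,\frac{3N}{4}}^{1/2}$, which is much larger than the $\mu_1^{-1/2}N^{-\tau}$ one gets for $E_{11}$ itself, because $E_{11}^I$ there carries the nodal values $E_{11}(x_{\frac{3N}{4}}^s,\cdot)\sim N^{-\tau}$, $s\ge1$, spread over a cell of width up to $CN^{-1}\gg\mu_1^{-1}$.

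The paper avoids this by splitting $IV$ by $x$-subintervals ($F_1$ on $[x_0,x_{\frac{3N}{4}}]$, $F_2$, $F_3$ on the two transition cells, $F_4$ on $[x_{\frac{3N}{4}+2},x_N]$) and, on $[x_{\frac{3N}{4}},x_{\frac{3N}{4}+1}]$, working with the \emph{combined} object $\pi_{11}E_{11}=E_{11}^I-PE_{11}+\Theta E_{11}$, which there reduces to the nodal values at $x_{\frac{3N}{4}}^0=x_{\frac{3N}{4}}$ only --- where $\vert E_{11}\vert\le C\mu_1^{-\tau}$, a genuinely smaller quantity than $N^{-\tau}$ --- plus two corner terms that are where Lemma \ref{eq:better-mesh} with $\eta=\frac12$ is actually needed. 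Your treatment of the remaining regions is essentially sound and parallels the paper's: your $\Omega_c$ argument (global smallness $\mu_1^{-\tau}$ plus Cauchy--Schwarz) is a workable, arguably simpler, alternative to the paper's $F_1$; your handling of the cells $i\ge\frac{3N}{4}+1$ matches $F_4$; and $IV_c$ does close, since $\Vert\Theta E_{11}\Vert$ carries the factor $(h_{x,\frac{3N}{4}}h_{y,0})^{1/2}\le C\varepsilon_1^{1/4}\mu_1^{-1/4}N^{-1/4}$, enough to absorb $\varepsilon_2\varepsilon_1^{-1/2}$. But without recombining $E_{11}^I$ and $PE_{11}$ on the transition cell the proof cannot be completed.
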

\begin{proof}
After analysis, we do the following decomposition
\begin{equation}\label{eq:II11-decomposition}
\begin{aligned}
&\int_{\Omega}(\pi_{11}E_{11}-E_{11})b\chi_xdxdy=\int_{x_0}^{x_{\frac{3N}{4}}}\int_0^1 b(E_{11}^I-E_{11})\chi_x dxdy\\
&+\int_{x_{\frac{3N}{4}}}^{x_{\frac{3N}{4}+1}}\int_0^1 b(\pi_{11} E_{11}-E_{11})\chi_x dxdy
+\int_{x_{\frac{3N}{4}+1}}^{x_{\frac{3N}{4}+2}}\int_0^1 b(\pi_{11} E_{11}-E_{11})\chi_x dxdy\\
&+\int_{x_{\frac{3N}{4}+2}}^{x_N}\int_0^1 b(E_{11}^I-E_{11})\chi_x dxdy
=: F_1+ F_2+ F_3+ F_4.
\end{aligned}
\end{equation}

 First, using \eqref{eq:E11-property}, the inverse inequality,  Lemmas \ref{eq:mesh-x-sizes}, \ref{eq:mesh-y-sizes} and \ref{eq:Interpolation-error} we can obtain 
\begin{equation}\label{eq:O1}
\begin{aligned}
\vert F_1\vert&\leq \sum_{i=0}^{\frac{3N}{4}-1}\sum_{j=0}^{N-1}\Vert E_{11}^I-E_{11}\Vert_{\mathscr{T}_{i,j}}\Vert \chi_x\Vert
\leq C\sum_{i=0}^{\frac{3N}{4}-1}\sum_{j=0}^{N-1}\sum\limits_{l+r=k+1}h_{x,i}^lh_{y,j}^r\left\|\frac{\partial^{k+1}E_{11}}{\partial x^l\partial y^r}\right\|_{\mathscr{T}_{i,j}}\Vert \chi_x\Vert_{\mathscr{T}_{i,j}}\\
&\leq C\sum_{i=0}^{\frac{3N}{4}-1}\sum_{j=0}^{N-1}\sum\limits_{l+r=k+1}h_{x,i}^lh_{y,j}^r\mu_1^le^{-p\mu_1(1-x_{i+1})}h_{x,i}^{\frac{1}{2}}h_{y,j}^{\frac{1}{2}}h_{x_i}^{-1}\Vert \chi\Vert_{\mathscr{T}_{i,j}}\\
&\leq C\sum_{i=0}^{\frac{3N}{4}-1}\sum_{j=0}^{N-1}\sum\limits_{l+r=k+1}\mu_1^l\mu_1^{-\tau}h_{x,i}^{l-\frac{1}{2}}h_{y,j}^{r+\frac{1}{2}}\Vert \chi\Vert_{\mathscr{T}_{i,j}}
\leq C\mu_1^{l-\tau}\sum_{i=0}^{\frac{3N}{4}-1}\sum_{j=0}^{N-1}N^{-(k+1)}\Vert \chi\Vert_{\mathscr{T}_{i,j}}\\
&\leq C\mu_1^{l-\tau}\left(\sum_{i=0}^{\frac{3N}{4}-1}\sum_{j=0}^{N-1}N^{-2(k+1)}\right)^{\frac{1}{2}}\left(\sum_{i=0}^{\frac{3N}{4}-1}\sum_{j=0}^{N-1}\Vert \chi\Vert_{\mathscr{T}_{i,j}}^2\right)^{\frac{1}{2}}\\
&\leq CN^{-k}\Vert \chi\Vert_{[x_0,x_{\frac{3N}{4}}]\times[0,1]}
\leq CN^{-k}\Vert \chi\Vert_{E,[x_0,x_{\frac{3N}{4}}]\times[0,1]}.
\end{aligned}
\end{equation}

Next, using \eqref{eq:E11-property}, \eqref{lem:e-E11} with $m=l$,  Lemmas \ref{eq:mesh-x-sizes}, \ref{eq:mesh-y-sizes} and \ref{eq:Interpolation-error} we can obtain 
\begin{equation}\label{eq:O4}
\begin{aligned}
\vert F_4\vert&\leq \sum_{i=\frac{3N}{4}+2}^{N-1}\sum_{j=0}^{N-1}\Vert E_{11}^I-E_{11}\Vert_{\mathscr{T}_{i,j}}\Vert \chi_x\Vert_{\mathscr{T}_{i,j}}
\leq C\sum_{i=\frac{3N}{4}+2}^{N-1}\sum_{j=0}^{N-1}\sum\limits_{l+r=k+1}h_{x,i}^lh_{y,j}^r\left\|\frac{\partial^{k+1}E_{11}}{\partial x^l\partial y^r}\right\|_{\mathscr{T}_{i,j}}\Vert \chi_x\Vert_{\mathscr{T}_{i,j}}\\
&\leq C\sum_{i=\frac{3N}{4}+2}^{N-1}\sum_{j=0}^{N-1}\sum\limits_{l+r=k+1}h_{x,i}^lh_{y,j}^r\mu_1^le^{-p\mu_1(1-x_{i+1})}h_{x,i}^{\frac{1}{2}}h_{y,j}^{\frac{1}{2}}\Vert \chi_x\Vert_{\mathscr{T}_{i,j}}\\
&\leq C\sum_{i=\frac{3N}{4}+2}^{N-1}\sum_{j=0}^{N-1}\sum\limits_{l+r=k+1}(\mu_1^{-l}N^{-l})\mu_1^lh_{x,i}^{\frac{1}{2}}h_{y,j}^{r+\frac{1}{2}}\Vert \chi_x\Vert_{\mathscr{T}_{i,j}}
\leq C\mu_1^{-\frac{1}{2}}\sum_{i=\frac{3N}{4}+2}^{N-1}\sum_{j=0}^{N-1}N^{-(k+1)-\frac{1}{2}}\Vert \chi_x\Vert_{\mathscr{T}_{i,j}}\\
&\leq C\mu_1^{-\frac{1}{2}}\left(\sum_{i=\frac{3N}{4}+2}^{N-1}\sum_{j=0}^{N-1}N^{-2(k+1)-1}\right)^{\frac{1}{2}}\left(\sum_{i=\frac{3N}{4}+2}^{N-1}\sum_{j=0}^{N-1}\Vert \chi_x\Vert_{\mathscr{T}_{i,j}}^2\right)^{\frac{1}{2}}\\
&\leq C\mu_1^{-\frac{1}{2}}N^{-(k+\frac{1}{2})}\Vert \chi_x\Vert_{[x_{\frac{3N}{4}+2},x_N]\times[0,1]}\\
&\leq C\varepsilon_1^{-\frac{1}{2}}\mu_1^{-\frac{1}{2}}N^{-(k+\frac{1}{2})}\Vert \chi\Vert_{E,[x_{\frac{3N}{4}+2},x_N]\times[0,1]}.
\end{aligned}
\end{equation}

Then, on the interval $[x_{\frac{3N}{4}+1},x_{\frac{3N}{4}+2}]\times[0,1]$, we notice that\\ $$\pi_{11}E_{11}=E_{11}^I-\sum_{j=0}^{N-1}\sum_{t=0}^{k-1}E_{11}(x_{\frac{3N}{4}+1}^0,y_j^t)\theta_{\frac{3N}{4}+1,j}^{0,t}-E_{11}(x_{\frac{3N}{4}+1}^0,y_N^0)\theta_{\frac{3N}{4}+1,N}^{0,0}.$$
Thus we have 
\begin{equation}\label{eq:O2}
\begin{aligned}
&\vert F_3\vert
\leq C\sum_{j=0}^{N-1}\Vert E_{11}^I-E_{11}\Vert_{\mathscr{T}_{\frac{3N}{4}+1,j}}\Vert \chi_x\Vert_{\mathscr{T}_{\frac{3N}{4}+1,j}}\\
&+C\sum_{j=0}^{N-1}\sum_{t=0}^{k-1}\vert E_{11}(x_{\frac{3N}{4}+1}^0,y_j^t)\vert\Vert\theta_{\frac{3N}{4}+1,j}^{0,t}\Vert_{\mathscr{T}_{\frac{3N}{4}+1,j}}\Vert \chi_x\Vert_{\mathscr{T}_{\frac{3N}{4}+1,j}}\\
&+\vert E_{11}(x_{\frac{3N}{4}+1}^0,y_N^0)\vert\Vert\theta_{\frac{3N}{4}+1,N}^{0,0}\Vert_{\mathscr{T}_{\frac{3N}{4}+1,N-1}}\Vert \chi_x\Vert_{\mathscr{T}_{\frac{3N}{4}+1,N-1}}\\
&=:\mathcal{R}_1+\mathcal{R}_2+\mathcal{R}_3\\
&\leq  C\varepsilon_1^{-\frac{1}{2}}\mu_1^{-\frac{1}{2}}N^{-(k+1)}\Vert \chi\Vert_{E,[x_{\frac{3N}{4}+1},x_{\frac{3N}{4}+2}]\times[0,1]}
\end{aligned}
\end{equation}
where same as \eqref{eq:O4}, we get
\begin{equation*}\label{eq:R1}
\begin{aligned}
\mathcal{R}_1
\leq C\varepsilon_1^{-\frac{1}{2}}\mu_1^{-\frac{1}{2}}N^{-(k+1)}\Vert \chi\Vert_{E,[x_{\frac{3N}{4}+1},x_{\frac{3N}{4}+2}]\times[0,1]},
\end{aligned}
\end{equation*}
and \eqref{eq:E11-property}, Lemmas \ref{eq:mesh-x-sizes} and \ref{eq:mesh-y-sizes} yield
\begin{equation}\label{eq:R2}
\begin{aligned}
\mathcal{R}_2
&\leq C\sum_{j=0}^{N-1}N^{-\tau}h_{x,\frac{3N}{4}+1}^{\frac{1}{2}}h_{y,j}^{\frac{1}{2}}\Vert \chi_x\Vert_{\mathscr{T}_{\frac{3N}{4}+1,j}}\leq C \mu_1^{-\frac{1}{2}}\sum_{j=0}^{N-1}N^{-\tau}N^{-\frac{1}{2}}\Vert \chi_x\Vert_{\mathscr{T}_{\frac{3N}{4}+1,j}}\\
&\leq C\mu_1^{-\frac{1}{2}}\left(\sum_{j=0}^{N-1}N^{-2\tau-1}\right)^{\frac{1}{2}}\left(\sum_{j=0}^{N-1}\Vert \chi_x\Vert_{\mathscr{T}_{\frac{3N}{4}+1,j}}^2\right)^{\frac{1}{2}}\\
&\leq C\mu_1^{-\frac{1}{2}}N^{-\tau}\Vert \chi_x\Vert_{[x_{\frac{3N}{4}+1},x_{\frac{3N}{4}+2}]\times[0,1]}\\
&\leq C\varepsilon_1^{-\frac{1}{2}}\mu_1^{-\frac{1}{2}}N^{-\tau}\Vert \chi\Vert_{E,[x_{\frac{3N}{4}+1},x_{\frac{3N}{4}+2}]\times[0,1]}.
\end{aligned}
\end{equation}
In the same way, one has
\begin{equation*}\label{eq:R3}
\begin{aligned}
\mathcal{R}_3
\leq C\varepsilon_1^{-\frac{1}{2}}\mu_1^{-\frac{1}{2}}N^{-(\tau+\frac{1}{2})}\Vert \chi\Vert_{E,\mathscr{T}_{\frac{3N}{4}+1,N-1}}.
\end{aligned}
\end{equation*}

Last, for $ F_2$, on the interval $[x_{\frac{3N}{4}},x_{\frac{3N}{4}+1}]\times[0,1]$, 
\begin{equation*}
\begin{aligned}
\pi_{11}E_{11}=\sum_{j=0}^{N-1}\sum_{t=0}^{k-1}E_{11}(x_{\frac{3N}{4}}^0,y_j^t)\theta_{\frac{3N}{4},j}^{0,t}+\sum_{s=1}^{k}E_{11}(x_{\frac{3N}{4}}^s,y_0^0)\theta_{\frac{3N}{4},0}^{s,0}+\sum_{s=0}^{k}E_{11}(x_{\frac{3N}{4}}^s,y_N^0)\theta_{\frac{3N}{4},N}^{s,0}.
\end{aligned}
\end{equation*}
Thus we have 
\begin{equation}\label{eq:O3}
\begin{aligned}
&\vert F_2\vert
\leq C\sum_{j=0}^{N-1}\sum_{t=0}^{k-1}\vert E_{11}(x_{\frac{3N}{4}}^0,y_j^t)\vert\Vert\theta_{\frac{3N}{4},j}^{0,t}\Vert_{\mathscr{T}_{\frac{3N}{4},j}}\Vert \chi_x\Vert_{\mathscr{T}_{\frac{3N}{4},j}}\\
&+C\Vert E_{11}\Vert_{[x_{\frac{3N}{4}},x_{\frac{3N}{4}+1}]\times[0,1]}\Vert \chi_x\Vert_{[x_{\frac{3N}{4}},x_{\frac{3N}{4}+1}]\times[0,1]}\\
&+C\sum_{s=1}^{k}\vert E_{11}(x_{\frac{3N}{4}}^s,y_0^0)\vert\Vert\theta_{\frac{3N}{4},0}^{s,0}\Vert_{\mathscr{T}_{\frac{3N}{4},0}}\Vert\chi_x \Vert_{\mathscr{T}_{\frac{3N}{4},0}}\\
&+C\sum_{s=0}^{k}\vert E_{11}(x_{\frac{3N}{4}}^s,y_N^0)\vert\Vert\theta_{{\frac{3N}{4}},N}^{s,0}\Vert_{\mathscr{T}_{\frac{3N}{4},N-1}}\Vert\chi_x \Vert_{\mathscr{T}_{\frac{3N}{4},N-1}}\\
&=\mathcal{S}_1+\mathcal{S}_2+\mathcal{S}_3+\mathcal{S}_4\\
&\leq C(\varepsilon_1^{-\frac{1}{2}}\mu_1^{-\frac{1}{2}}N^{-(k+1)}+C\varepsilon_1^{-\frac{1}{4}}\mu_1^{-\frac{1}{4}}N^{-(\tau+\frac{1}{4})})\Vert \chi\Vert_{E,[x_{\frac{3N}{4}},x_{\frac{3N}{4}+1}]\times[0,1]}.
\end{aligned}
\end{equation}
The proof is as follows, same as \eqref{eq:R2}, we get
\begin{equation*}\label{eq:W1-w2}
\begin{aligned}
\mathcal{S}_1+\mathcal{S}_2\leq C\varepsilon_1^{-\frac{1}{2}}\mu_1^{-\tau}N^{-\frac{1}{2}}\Vert \chi\Vert_{E,[x_{\frac{3N}{4}},x_{\frac{3N}{4}+1}]\times[0,1]}.
\end{aligned}
\end{equation*}
When dealing with $\mathcal{S}_3$ and $\mathcal{S}_4$, the mesh scale in Lemma \ref{eq:mesh-x-sizes} is not enough, so we still use the analysis method of \eqref{eq:R2}, but we use the mesh scale of Lemma \ref{eq:better-mesh} with $\eta=\frac{1}{2}$, thus we have
\begin{equation*}\label{eq:W3}
\begin{aligned}
\mathcal{S}_3+\mathcal{S}_4\leq C\varepsilon_1^{-\frac{1}{4}}\mu_1^{-\frac{1}{4}}N^{-(\tau+\frac{1}{4})}\Vert \chi\Vert_{E,[x_{\frac{3N}{4}},x_{\frac{3N}{4}+1}]\times[0,1]}.
\end{aligned}
\end{equation*}

So, by combining \eqref{cos:conclusions-2}, \eqref{eq:mu01-assume},  \eqref{eq:II11-decomposition}, \eqref{eq:O1}, \eqref{eq:O4}, \eqref{eq:O2} and  \eqref{eq:O3} to obtain
\begin{equation*}\label{eq:IV}
\begin{aligned}
\vert IV\vert&\leq C(\varepsilon_2N^{-k}+\varepsilon_2\varepsilon_1^{-\frac{1}{2}}\mu_1^{-\frac{1}{2}}N^{-(k+\frac{1}{2})}+C\varepsilon_2\varepsilon_1^{-\frac{1}{4}}\mu_1^{-\frac{1}{4}}N^{-(k+\frac{5}{4})})\Vert \chi\Vert_{E}\\
&\leq C(\varepsilon_2N^{-k}+\varepsilon_2^{\frac{1}{2}}N^{-(k+\frac{1}{2})}+\varepsilon_2^{\frac{3}{4}}N^{-(k+\frac{5}{4})})\Vert \chi\Vert_{E}\\
&\leq C(\varepsilon_2N^{-k}+\varepsilon_2^{\frac{1}{2}}N^{-(k+\frac{1}{2})})\Vert \chi\Vert_{E}.
\end{aligned}
\end{equation*}
\end{proof}
\begin{lemma}\label{eq:convection item-V}
Assuming that $\tau\ge k+1$, on the Bakhvalov-type mesh $\mathcal{T}$, one has 
\begin{equation*}
\vert V\vert\leq C\varepsilon_2^{\frac{1}{2}}N^{-(k+\frac{1}{2})}\Vert\chi\Vert_E.
\end{equation*}
\end{lemma}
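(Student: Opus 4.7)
My plan is to follow the template of Lemma \ref{eq:convection item-IV}, adjusting only for the new features coming from the corner layers $E_{32}, E_{33}$. By symmetry under the reflection $y\mapsto 1-y$, the analysis for $i=32$ and $i=33$ is analogous, so I will present only $i=32$. The first step will be to decompose
\begin{equation*}
\int_\Omega (\pi_{32}E_{32}-E_{32})\,b\,\chi_x\,dxdy = G_1+G_2+G_3+G_4
\end{equation*}
over the four $x$-intervals $[x_0,x_{3N/4}]$, $[x_{3N/4},x_{3N/4+1}]$, $[x_{3N/4+1},x_{3N/4+2}]$, $[x_{3N/4+2},x_N]$, exactly as in \eqref{eq:II11-decomposition}. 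Since the support of $PE_{32}$ and $\Theta E_{32}$ is confined to $[x_{3N/4},x_{3N/4+2}]$, on $G_1$ and $G_4$ the integrand will reduce to $b(E_{32}^I-E_{32})\chi_x$, while on $G_2$ and $G_3$ I will insert the explicit nodal expansions of $\pi_{32}E_{32}$ that appear in the proof of Lemma \ref{eq:convection item-IV}, with $E_{11}$ replaced by $E_{32}$.

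For $G_1$ and $G_4$, the arguments of \eqref{eq:O1} and \eqref{eq:O4} will transfer almost verbatim, except that \eqref{eq:E32-property} carries an extra factor $\varepsilon_1^{-r/2}e^{-\delta y/\sqrt{\varepsilon_1}}$. Squaring and integrating in $y$, then invoking Lemma \ref{lem:Error-condition} (specifically \eqref{lem:e-E20} with $m=r+\tfrac12$), will absorb this into powers of $\sqrt{\varepsilon_1}/N$, producing an extra $\sqrt{\varepsilon_1}$-gain over the $E_{11}$ bounds. For $G_3$, I will follow \eqref{eq:O2}: every nodal value $|E_{32}(x_{3N/4+1}^0,y_j^t)|$ is bounded by $CN^{-\tau}e^{-\delta y_j^t/\sqrt{\varepsilon_1}}$ because $1-x_{3N/4+1}\geq\sigma_{x,1}$ gives $e^{-p\mu_1(1-x_{3N/4+1})}\leq\mu_1^{-\tau}\leq N^{-\tau}$. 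For $G_2$, I will follow \eqref{eq:O3}: expand $\pi_{32}E_{32}$ as a sum of row contributions $\sum_{j,t}E_{32}(x_{3N/4}^0,y_j^t)\theta_{3N/4,j}^{0,t}$ plus the two corner contributions at $y=0$ and $y=1$; the $y=1$ term is negligibly small thanks to the $y$-exponential, and the $y=0$ term is the delicate one.

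Collecting the four bounds, multiplying by $\|b\|_\infty\varepsilon_2$, and applying \eqref{cos:conclusions-1}--\eqref{cos:conclusions-2}---in particular $\varepsilon_2(\varepsilon_1\mu_1)^{-1/2}\leq C\varepsilon_2^{1/2}$, together with the consequence $\varepsilon_2\sqrt{\varepsilon_1}\leq C\varepsilon_2^{1/2}N^{-1/2}$ derivable from \eqref{eq:varepsilon12-condition}---will collapse the worst term into $C\varepsilon_2^{1/2}N^{-(k+1/2)}\|\chi\|_E$. The hard part will be the corner contribution at $y=0$ inside $G_2$: the basis function $\theta_{3N/4,0}^{s,0}$ lives on a cell of dimensions $h_{x,3N/4}\times h_{y,0}$ with $h_{y,0}\sim\sqrt{\varepsilon_1}/N$, so its energy norm acquires an $\varepsilon_1^{-1/2}$ from the $y$-derivative. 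The naive mesh estimate $h_{x,3N/4}\lesssim N^{-1}$ will then be inadequate; as in the treatment of $\mathcal{S}_3,\mathcal{S}_4$ in the preceding lemma, I will need to invoke Lemma \ref{eq:better-mesh} with $\eta=\tfrac12$ to use the sharper bound $h_{x,3N/4}\leq C\mu_1^{-1/2}N^{-1/2}$. Only with that refinement does the resulting estimate fit into $\varepsilon_2^{1/2}N^{-(k+1/2)}$ without leaving a stray $\varepsilon_1^{-1/2}$ factor.
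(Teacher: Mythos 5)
Your plan follows essentially the same route as the paper: the same four-piece decomposition in $x$ as in \eqref{eq:II11-decomposition}, the explicit nodal expansions of $\pi_{32}E_{32}$ on the two cells adjacent to $x_{\frac{3N}{4}+1}$, reduction by symmetry to $i=32$, and the closing use of $\varepsilon_2(\varepsilon_1\mu_1)^{-\frac{1}{2}}\leq C\varepsilon_2^{\frac{1}{2}}$ and $\varepsilon_2\leq c_1N^{-1}$. Your identification of the $y=0$ corner basis function in $G_2$ as the delicate term, requiring Lemma \ref{eq:better-mesh} with $\eta=\tfrac12$ to avoid a stray $\varepsilon_1^{-\frac{1}{4}}$, is exactly the right observation (the paper's bound for $\mathcal{V}_2+\mathcal{V}_3$ indeed does not follow from Lemma \ref{eq:mesh-x-sizes} alone). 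Two details need repair, though neither is fatal. First, your justification for the nodal bound in $G_3$ is backwards: since $x_{\frac{3N}{4}+1}>x_{\frac{3N}{4}}=1-\sigma_{x,1}$, one has $1-x_{\frac{3N}{4}+1}<\sigma_{x,1}$, so $e^{-p\mu_1(1-x_{\frac{3N}{4}+1})}$ is \emph{larger} than $\mu_1^{-\tau}$; the correct route is the direct computation $e^{-p\mu_1(1-x_{\frac{3N}{4}+1})}=\bigl(\mu_1^{-1}+4(1-\mu_1^{-1})N^{-1}\bigr)^{\tau}\leq CN^{-\tau}$ from \eqref{eq:mesh-x-points}, which still yields the bound you use. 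Second, transferring \eqref{eq:O4} ``verbatim'' to $G_4$ overlooks that \eqref{lem:e-E20} is only available for $j\leq\frac{N}{4}-2$; on the cells with $j\geq\frac{N}{4}-1$ you must argue separately, either via $e^{-\delta y_j/\sqrt{\varepsilon_1}}\leq C\varepsilon_1^{\tau/2}$ absorbing the $\varepsilon_1^{-r/2}$, or, as the paper does, by splitting $M_4$ into $[y_0,y_{\frac{N}{4}-1}]$ and $[y_{\frac{N}{4}-1},y_N]$ and using a sup-norm bound together with $1-x_{\frac{3N}{4}+2}\leq C\mu_1^{-1}\ln N$ on the outer piece. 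With these two corrections your argument closes and matches the paper's.
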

\begin{proof}
To simplify the analysis, we decompose $V$ as follows
\begin{equation*}\label{eq:d-E32}
\begin{aligned}
&V
=\varepsilon_2\int_{x_0}^{x_{\frac{3N}{4}}}\int_0^1(E_{32}^I-E_{32})b\chi_x dxdy
+\varepsilon_2\int_{x_{\frac{3N}{4}}}^{x_{\frac{3N}{4}+1}}\int_0^1(\pi_{32}E_{32}-E_{32})b\chi_x dxdy\\
&+\varepsilon_2\int_{x_{\frac{3N}{4}+1}}^{x_{\frac{3N}{4}+2}}\int_0^1(\pi_{32}E_{32}-E_{32})b\chi_x dxdy
+\varepsilon_2\int_{x_{\frac{3N}{4}+2}}^{x_N}\int_0^1(E_{32}^I-E_{32})b\chi_x dxdy\\
&=:M_1+M_2+M_3+M_4.
\end{aligned}
\end{equation*}

For $M_1$, using triangle inequality,   \eqref{cos:conclusions-2} and \eqref{eq:E32-property}   we can get
\begin{equation}\label{eq:M1}
\begin{aligned}
\vert M_1\vert&\leq C\varepsilon_2(\Vert E_{32}^I\Vert_{\infty,[x_0,x_{\frac{3N}{4}}]\times[0,1]}+\Vert E_{32}\Vert_{\infty,[x_0,x_{\frac{3N}{4}}]\times[0,1]})\Vert \chi_x\Vert_{[x_0,x_{\frac{3N}{4}}]\times[0,1]}\\
&\leq C\varepsilon_2\Vert E_{32}\Vert_{\infty,[x_0,x_{\frac{3N}{4}}]\times[0,1]}\Vert \chi_x\Vert_{E,[x_0,x_{\frac{3N}{4}}]\times[0,1]}\\
&\leq C\varepsilon_2\varepsilon_1^{-\frac{1}{2}}\mu_1^{-\tau}\Vert \chi\Vert_{E,[x_0,x_{\frac{3N}{4}}]\times[0,1]}\\
&\leq C\varepsilon_2^{\frac{1}{2}}\mu_1^{\frac{1}{2}-\tau}\Vert \chi\Vert_{E,[x_0,x_{\frac{3N}{4}}]\times[0,1]}.
\end{aligned}
\end{equation}

On the interval $[x_{\frac{3N}{4}},x_{\frac{3N}{4}+1}]\times[0,1]$, we notice 
$$\pi_{32}E_{32}=\sum_{j=0}^{N-1}\sum_{t=0}^{k-1}E_{32}(x_{\frac{3N}{4}}^0,y_j^t)\theta_{\frac{3N}{4},j}^{0,t}+\sum_{s=1}^{k}E_{32}(x_{\frac{3N}{4}}^s,y_0^0)\theta_{\frac{3N}{4},0}^{s,0}+\sum_{s=0}^{k}E_{32}(x_{\frac{3N}{4}}^s,y_N^0)\theta_{\frac{3N}{4},N}^{s,0}.$$
Thus 
\begin{equation}\label{eq:M2}
\begin{aligned}
\vert M_2\vert&\leq C\varepsilon_2\sum_{j=0}^{N-1}\sum_{t=0}^{k-1}\vert E_{32}(x_{\frac{3N}{4}}^0,y_j^t)\vert\Vert\theta_{\frac{3N}{4},j}^{0,t}\Vert_{\mathscr{T}_{\frac{3N}{4},j}}\Vert\chi_x\Vert_{\mathscr{T}_{\frac{3N}{4},j}}\\
&+C\varepsilon_2\sum_{s=1}^{k}\vert E_{32}(x_{\frac{3N}{4}}^s,y_0^0)\vert\Vert\theta_{\frac{3N}{4},0}^{s,0}\Vert_{\mathscr{T}_{\frac{3N}{4},0}}\Vert\chi_x\Vert_{\mathscr{T}_{\frac{3N}{4},0}}\\
&+C\varepsilon_2\sum_{s=0}^{k}\vert E_{32}(x_{\frac{3N}{4}}^s,y_N^0)\vert\Vert\theta_{\frac{3N}{4},N}^{s,0}\Vert_{\mathscr{T}_{\frac{3N}{4},N-1}}\Vert\chi_x\Vert_{\mathscr{T}_{\frac{3N}{4},N-1}}\\
&+C\varepsilon_2\Vert E_{32}\Vert_{[x_{\frac{3N}{4}},x_{\frac{3N}{4}+1}]\times[0,1]}\Vert\chi_x\Vert_{[x_{\frac{3N}{4}},x_{\frac{3N}{4}+1}]\times[0,1]}\\
&=:\mathcal{V}_1+\mathcal{V}_2+\mathcal{V}_3+\mathcal{V}_4\\
&\leq C(\varepsilon_2^{\frac{1}{2}}N^{-\tau-\frac{1}{2}}+\varepsilon_2^{\frac{1}{2}}\varepsilon_1^{\frac{1}{4}}N^{-\tau})\Vert\chi\Vert_{E},
\end{aligned}
\end{equation}
where similar to  \eqref{eq:R2}, one has $\mathcal{V}_1\leq C\varepsilon_2\varepsilon_1^{-\frac{1}{2}}\mu_1^{-\frac{1}{2}}\mu_1^{-\tau+\frac{1}{2}}N^{-1}\Vert\chi\Vert_E,$ then use \eqref{cos:conclusions-2} we obtain 
\begin{equation}\label{eq:V1}
\mathcal{V}_1\leq C\varepsilon_2^{\frac{1}{2}}\mu_1^{-\tau+\frac{1}{2}}N^{-1}\Vert\chi\Vert_E.
\end{equation}
 In the same way, one has 
\begin{align*}
&\mathcal{V}_2+\mathcal{V}_3\leq C\varepsilon_2N^{-\tau-\frac{1}{2}}\Vert\chi\Vert_{E}.\\
&\mathcal{V}_4\leq C\varepsilon_2^{\frac{1}{2}}\varepsilon_1^{\frac{1}{4}}N^{-\tau}\Vert\chi\Vert_E.
\end{align*}

On the interval $[x_{\frac{3N}{4}+1},x_{\frac{3N}{4}+2}]\times[0,1]$, 
$$\pi_{32}E_{32}=E_{32}^I-\sum_{j=0}^{N-1}\sum_{t=0}^{k-1}E_{32}(x_{\frac{3N}{4}+1}^0,y_j^t)\theta_{\frac{3N}{4}+1,j}^{0,t}-E_{32}(x_{\frac{3N}{4}+1}^0,y_N^0)\theta_{\frac{3N}{4}+1,N}^{0,0}.$$
Thus
\begin{equation}\label{eq:M3}
\begin{aligned}
\vert M_3\vert&\leq C\varepsilon_2\sum_{j=0}^{N-1}\Vert E_{32}^I-E_{32}\Vert_{\infty,\mathscr{T}_{\frac{3N}{4}+1,j}}h_{x,\frac{3N}{4}+1}^{\frac{1}{2}}\Vert\chi_x\Vert_{\mathscr{T}_{\frac{3N}{4}+1,j}}\\
&+C\varepsilon_2\sum_{j=0}^{N-1}\sum_{t=0}^{k-1}\vert E_{32}(x_{\frac{3N}{4}+1}^0,y_j^t)\vert\Vert\theta_{\frac{3N}{4}+1,j}^{0,t}\Vert_{\mathscr{T}_{\frac{3N}{4}+1,j}}\Vert\chi_x\Vert_{\mathscr{T}_{\frac{3N}{4}+1,j}}\\
&+C\varepsilon_2\vert E_{32}(x_{\frac{3N}{4}+1}^0,y_N^0)\vert\Vert\theta_{\frac{3N}{4}+1,N}^{0,0}\Vert_{\mathscr{T}_{\frac{3N}{4}+1,N-1}}\Vert\chi_x\Vert_{\mathscr{T}_{\frac{3N}{4}+1,N-1}}\\
&=:\mathcal{W}_1+\mathcal{W}_2+\mathcal{W}_3\leq C\varepsilon_2^{\frac{1}{2}}N^{-(k+\frac{1}{2})},
\end{aligned}
\end{equation}
where same as \eqref{eq:M1}, one has
\begin{equation*}\label{eq:n1}
\begin{aligned}
\mathcal{W}_1\leq C\varepsilon_2^{\frac{1}{2}}N^{\frac{1}{2}-\tau}\Vert\chi\Vert_E,
\end{aligned}
\end{equation*}
and in the same way as \eqref{eq:V1}, it can  obtain
\begin{align*}
&\mathcal{W}_2\leq C\varepsilon_2^{\frac{1}{2}}N^{-\tau}\Vert\chi\Vert_E,\\
&\mathcal{W}_3\leq C\varepsilon_1^{\frac{1+\tau}{2}}\varepsilon_1^{\frac{1}{4}}N^{-\tau}\Vert\chi\Vert_E.
\end{align*}

For $M_4$, H\"{o}lder inequality yields
\begin{equation}\label{eq:M4}
\begin{aligned}
\vert M_4\vert&\leq C\varepsilon_2\Vert E_{32}^I-E_{32}\Vert_{[x_{\frac{3N}{4}+2},x_N]\times[y_0,y_{\frac{N}{4}-1}]}\Vert \chi_x\Vert_{[x_{\frac{3N}{4}+2},x_N]\times[y_0,y_{\frac{N}{4}-1}]}\\
&+C\varepsilon_2\Vert E_{32}^I-E_{32}\Vert_{[x_{\frac{3N}{4}+2},x_N]\times[y_{\frac{N}{4}-1},y_N]}\Vert \chi_x\Vert_{[x_{\frac{3N}{4}+2},x_N]\times[y_{\frac{N}{4}-1},y_N]}\\
&=:\mathcal{Z}_1+\mathcal{Z}_2\leq C(\varepsilon_2\mu_1^{-\frac{1}{2}}N^{-k}+\varepsilon_2^{\frac{1}{2}}N^{\frac{1}{2}-\tau})\Vert\chi\Vert_E,
\end{aligned}
\end{equation}
where similar to \eqref{eq:O4}, one has 
\begin{equation*}\label{eq:f1}
\begin{aligned}
\mathcal{Z}_1\leq C\varepsilon_2\mu_1^{-\frac{1}{2}}N^{-k}\Vert \chi\Vert_E,
\end{aligned}
\end{equation*}
and H\"{o}lder inequality, \eqref{eq:E32-property}, Lemma \ref{eq:mesh-x-sizes} and \eqref{cos:conclusions-2} yield
\begin{equation*}\label{eq:f2}
\begin{aligned}
\mathcal{Z}_2&\leq C\varepsilon_2\Vert E_{32}^I-E_{32}\Vert_{\infty,[x_{\frac{3N}{4}+2},x_N]\times[y_{\frac{N}{4}-1},y_N]}(1-x_{\frac{3N}{4}+2})^{\frac{1}{2}}\Vert \chi_x\Vert_{[x_{\frac{3N}{4}+2},x_N]\times[y_{\frac{N}{4}-1},y_N]}\\
&\leq C\varepsilon_2\Vert E_{32}\Vert_{\infty,[x_{\frac{3N}{4}+2},x_N]\times[y_{\frac{N}{4}-1},y_N]}\mu_1^{-\frac{1}{2}}(\ln N)^{\frac{1}{2}}\Vert \chi_x\Vert\\
&\leq C\varepsilon_2N^{\tau}\mu_1^{-\frac{1}{2}}N^{\frac{1}{2}}\Vert \chi_x\Vert\\
&\leq C\varepsilon_2\varepsilon_1^{-\frac{1}{2}}\mu_1^{-\frac{1}{2}}N^{\tau}N^{\frac{1}{2}}\Vert \chi\Vert_E\\
&\leq C\varepsilon_2^{\frac{1}{2}}N^{\frac{1}{2}-\tau}\Vert\chi\Vert_E.
\end{aligned}
\end{equation*}

Thus,  from \eqref{eq:mu01-assume}, \eqref{eq:varepsilon12-condition}, \eqref{eq:M1}, \eqref{eq:M2}, \eqref{eq:M3} and \eqref{eq:M4}  we can obtain our conclusion.
\end{proof}

Now we present the main conclusions of this paper.
\begin{theorem}
Assuming $\tau\geq k+1$. On the Bakhvalov-type mesh $\mathcal{T}$, and based on Assumption \ref{eq:priori estimates of u}, we have
\begin{align*}\label{eq:uI-uN}
&\Vert u^I-u^N\Vert_E+\Vert \Pi u-u^N\Vert_E\leq C(\varepsilon_1^{\frac{1}{2}}+\varepsilon_2)^{\frac{1}{2}}N^{-k}+CN^{-(k+1)},\\
&\Vert  u-u^N\Vert_E\leq C(\varepsilon_1^{\frac{1}{2}}+\varepsilon_2)^{\frac{1}{2}}N^{-k}+CN^{-(k+1)}.
\end{align*} 
\end{theorem}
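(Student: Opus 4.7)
The plan is to bound $\Vert\chi\Vert_E$, where $\chi:=\Pi u-u^N$, by combining coercivity and Galerkin orthogonality with the estimates of the seven pieces $\mathrm{I}$--$\mathrm{VII}$ in the decomposition of $a(\Pi u-u,\chi)$ already set up before Lemma \ref{eq:convection item-IV}. Once $\Vert\chi\Vert_E$ is controlled, the two displayed estimates of the theorem follow immediately by the triangle inequality together with Theorem \ref{eq:interpolation-u-uI} and identity \eqref{eq:piu}. No essentially new ingredient is needed at this stage: all the delicate work has already been carried out in the preceding lemmas.

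First I would write
\[
\alpha\Vert\chi\Vert_E^2\leq a(\chi,\chi)=a(\Pi u-u,\chi)=\mathrm{I}+\mathrm{II}+\mathrm{III}+\mathrm{IV}+\mathrm{V}+\mathrm{VI}+\mathrm{VII},
\]
using \eqref{eq:coercivity} and Galerkin orthogonality, then bound each group of terms using the estimates already available: \eqref{eq:I-VI-VII} for $\mathrm{I}+\mathrm{VI}+\mathrm{VII}$, \eqref{eq:III} for $\mathrm{II}+\mathrm{III}$, Lemma \ref{eq:convection item-IV} for $\mathrm{IV}$, and Lemma \ref{eq:convection item-V} for $\mathrm{V}$. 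Dividing through by $\Vert\chi\Vert_E$ produces a composite bound in which several small-parameter factors must be collapsed to the clean right-hand side in the theorem.

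The short piece of genuine work remaining is this algebraic simplification. From $\varepsilon_2\leq c_1N^{-1}\leq 1$ in \eqref{eq:varepsilon12-condition} I obtain $\varepsilon_2\leq(\varepsilon_1^{1/2}+\varepsilon_2)^{1/2}$, and by AM--GM $\varepsilon_2^{1/2}\varepsilon_1^{1/4}\leq C(\varepsilon_1^{1/2}+\varepsilon_2)^{1/2}$; also $\varepsilon_2^{1/2}N^{-(k+1/2)}\leq CN^{-(k+1)}$, again by \eqref{eq:varepsilon12-condition}. With these reductions all contributions collapse into $C(\varepsilon_1^{1/2}+\varepsilon_2)^{1/2}N^{-k}+CN^{-(k+1)}$, which is the desired bound on $\Vert\Pi u-u^N\Vert_E$.

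To conclude, I would invoke Theorem \ref{eq:interpolation-u-uI} and the triangle inequality $\Vert u-u^N\Vert_E\leq\Vert u-\Pi u\Vert_E+\Vert\Pi u-u^N\Vert_E$ to obtain the second line of the theorem. For $\Vert u^I-u^N\Vert_E$ I would use \eqref{eq:piu} to write $u^I-\Pi u=\sum_{i\in\{11,32,33\}}(PE_i-\Theta E_i)$ and bound each summand in the energy norm by Lemmas \ref{eq:Interpolation-E-energy} and \ref{eq:Interpolation-E-2}, each giving at most $CN^{-(k+1)}$. The main obstacle of the overall convergence analysis has in fact already been overcome in Lemmas \ref{eq:convection item-IV} and \ref{eq:convection item-V}, in particular in the careful treatment of the three mesh cells straddling the Bakhvalov transition point $x_{3N/4}$ via the refined mesh scale from Lemma \ref{eq:better-mesh}; for the theorem itself no further obstacle remains, only bookkeeping.
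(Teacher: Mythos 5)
Your proposal is correct and follows essentially the same route as the paper: coercivity and Galerkin orthogonality reduce everything to the already-established bounds \eqref{eq:I-VI-VII}, \eqref{eq:III} and Lemmas \ref{eq:convection item-IV}--\ref{eq:convection item-V} for $\Vert\Pi u-u^N\Vert_E$, after which \eqref{eq:piu} together with Lemmas \ref{eq:Interpolation-E-energy}--\ref{eq:Interpolation-E-2} gives $\Vert u^I-u^N\Vert_E$ and Theorem \ref{eq:interpolation-u-uI} gives $\Vert u-u^N\Vert_E$. The only difference is that you spell out the parameter bookkeeping (e.g.\ $\varepsilon_2^{1/2}N^{-(k+1/2)}\leq CN^{-(k+1)}$ via \eqref{eq:varepsilon12-condition}), which the paper leaves implicit.
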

\begin{proof}
From  \eqref{eq:I-VI-VII}, \eqref{eq:III}, Lemmas \ref{eq:convection item-IV} and \ref{eq:convection item-V} we can prove 
$$\Vert \Pi u-u^N\Vert_E\leq C(\varepsilon_1^{\frac{1}{2}}+\varepsilon_2)^{\frac{1}{2}}N^{-k}+CN^{-(k+1)}.$$
Combination of \eqref{eq:piu}, Lemmas \ref{eq:Interpolation-E-energy} and  \ref{eq:Interpolation-E-2} yields
$$\Vert u^I-u^N\Vert_E\leq C(\varepsilon_1^{\frac{1}{2}}+\varepsilon_2)^{\frac{1}{2}}N^{-k}+CN^{-(k+1)}.$$

Finally, using Theorem \ref{eq:interpolation-u-uI} we prove that
\begin{equation*}
\Vert  u-u^N\Vert_E\leq C(\varepsilon_1^{\frac{1}{2}}+\varepsilon_2)^{\frac{1}{2}}N^{-k}+CN^{-(k+1)}.
\end{equation*}
\end{proof}
\section{Numerical experiments}
The purpose of this section is to verify that our main conclusions are correct. In order to do so, we study the performance of the method when applied to the test problem
\begin{align}
&-\varepsilon_1\Delta u+\varepsilon_2(2-x)u_x+u=f(x,y)\quad \text{in}\ \Omega,\label{eq:test-1}\\
&u\vert_{\partial\Omega}=0\nonumber,
\end{align}
where choice of the right-hand side satisfies
\begin{equation*}\label{eq:real solution}
u(x,y)=\frac{1}{4}\left(1-e^{-\mu_0x}\right)\left(1-e^{-\mu_1(1-x)}\right)\left(1-e^{\frac{y}{\sqrt{\varepsilon_1}}}\right)\left(1-e^{\frac{(1-y)}{\sqrt{\varepsilon_1}}}\right),
\end{equation*}
with 
$$\mu_0=\frac{-\varepsilon_2+\sqrt{\varepsilon_2^2+\varepsilon_1}}{\varepsilon_1},\ \  \mu_1=\frac{\varepsilon_2+\sqrt{\varepsilon_2^2+4\varepsilon_1}}{2\varepsilon_1}.$$
is the exact solution.


In our example, we take $k=1,2,3$,  $p=0.5$, $\delta=0.25$, $N=2^3,\cdots,2^9$. 
Besides, we should choose the perturbation parameter range $R(\varepsilon_1, \varepsilon_2)$ that meets the conditions \eqref{eq:mu01-assume}, \eqref{eq:transition-points} and the mesh is completely in the Bakhvalov-type. 
Thus, for problem \eqref{eq:test-1}, the value range of disturbance parameter $R(\varepsilon_1, \varepsilon_2)$ should be
$$R(\varepsilon_1, \varepsilon_2)=\{(\varepsilon_1,\varepsilon_2)|0<\varepsilon_1\leq 10^{-6},\ 0<\varepsilon_2\leq 10^{-3}\}.$$
To be more general, we take $\varepsilon_1=1,10^{-2},10^{-4},10^{-6},10^{-8},10^{-10}$,  $\varepsilon_2=1,10^{-4},10^{-8}.$

For any fixed value of $k$ and $\varepsilon_2$, energy norm error estimation will be calculated by
\begin{equation*}\label{eq:energy-test}
e^N=\Vert u-u^N\Vert_E,
\end{equation*}
where $u$ is the exact solution given by \eqref{eq:test-1} and $u^N$ represents its numerical approximation.
And its corresponding convergence rate is
\begin{equation*}\label{eq:convergence-test}
p^N=\frac{\ln e^N-\ln e^{2N}}{\ln 2}.
\end{equation*}

In Tables \ref{table:1}--\ref{table:3}, we give the energy error estimations and convergence orders of $k=1$ and $\varepsilon_2=1,10^{-4},10^{-8}$. At the same time, we present the energy estimations in the cases of $k=2, \varepsilon_2=1,10^{-4},10^{-8}$ and $k=3, \varepsilon_2= 1,10^{-4},10^{-8}$  in  the figure below.
As can be seen from the chart, our conclusion is verified.
\begin{table}[h]
\caption{$\Vert u-u^N\Vert_E$ in the case of $\varepsilon_2=1$ and $k=1$}
\footnotesize
\begin{tabular*}{\textwidth}{@{}@{\extracolsep{\fill}} c ccccccc @{}}
\cline{1-8}{}
    \multirow{2}{*}{ $\varepsilon_1$ }&\multicolumn{7}{c}{$N$ }\\ 
\cline{2-8}                 &8        &16         &32        &64       &128        &256       &512       \\
\cline{1-8}
\multirow{2}{*}{ $1$ }      &0.13E-2  &0.66E-3    &0.33E-3   &0.16E-3  &0.82E-4    &0.41E-4   &0.21E-4\\
                            &1.00     &1.00       &1.00      &1.00     &1.00       &1.00      &--   \\
\cline{2-8}
\multirow{2}{*}{ $10^{-2}$ }&0.39E-1  &0.20E-1    &0.99E-2   &0.49E-2  &0.25E-2    &0.12E-2   &0.62E-3\\
                            &0.98     &1.00       &1.00      &1.00     &1.00       &1.00      &--   \\
\cline{2-8}
\multirow{2}{*}{ $10^{-4}$ }&0.46E-1  &0.23E-1    &0.11E-1   &0.57E-2  &0.29E-2    &0.14E-2   &0.72E-3\\
                            &0.99     &1.00       &1.00      &1.00     &1.00       &1.00      &--   \\
\cline{2-8}
\multirow{2}{*}{ $10^{-6}$ }&0.46E-1  &0.23E-1    &0.12E-1   &0.58E-2  &0.29E-2    &0.14E-2   &0.72E-3\\
                            &0.99     &1.00       &1.00      &1.00     &1.00       &1.00      &--   \\    
\cline{2-8}
\multirow{2}{*}{ $10^{-8}$ }&0.46E-1  &0.23E-1    &0.12E-1   &0.58E-2  &0.29E-2    &0.14E-2   &0.72E-3\\
                            &0.99     &1.00       &1.00      &1.00     &1.00       &1.00      &--   \\ 
\cline{2-8}
\multirow{2}{*}{ $10^{-10}$}&0.46E-1  &0.23E-1    &0.12E-1   &0.58E-2  &0.29E-2    &0.14E-2   &0.72E-3\\
                            &0.99     &1.00       &1.00      &1.00     &1.00       &1.00      &--   \\ 
\cline{1-8}{}
\end{tabular*}
\label{table:1}
\end{table}
\begin{table}[h]
\caption{$\Vert u-u^N\Vert_E$ in the case of $\varepsilon_2=10^{-4}$ and $k=1$}
\footnotesize
\begin{tabular*}{\textwidth}{@{}@{\extracolsep{\fill}} c ccccccc @{}}
\cline{1-8}{}
    \multirow{2}{*}{ $\varepsilon_1$ }&\multicolumn{7}{c}{$N$ }\\ 
\cline{2-8}                 &8        &16         &32        &64       &128        &256       &512       \\
\cline{1-8}
\multirow{2}{*}{ $1$ }      &0.22E-2  &0.11E-2    &0.55E-3   &0.28E-3  &014E-3    &0.69E-4   &0.35E-4\\
                            &1.01     &1.00       &1.00      &1.00     &1.00       &1.00      &--   \\
\cline{2-8}
\multirow{2}{*}{ $10^{-2}$ }&0.33E-1  &0.17E-1    &0.84E-2   &0.42E-2  &0.21E-2    &0.11E-2   &0.53E-3\\
                            &1.00    &1.00       &1.00      &1.00     &1.00       &1.00      &--   \\
\cline{2-8}
\multirow{2}{*}{ $10^{-4}$ }&0.26E-1  &0.12E-1    &0.58E-2   &0.29E-2  &0.14E-2    &0.72E-3   &0.36E-3\\
                            &1.13     &1.03       &1.01      &1.00     &1.00       &1.00      &--   \\
\cline{2-8}
\multirow{2}{*}{ $10^{-6}$ }&0.83E-2  &0.38E-2    &0.19E-2   &0.93E-3  &0.46E-3    &0.23E-3   &0.12E-3\\
                            &1.14     &1.03       &1.01      &1.00     &1.00       &1.00      &--   \\    
\cline{2-8}
\multirow{2}{*}{ $10^{-8}$ }&0.28E-2  &0.12E-2    &0.60E-3   &0.30E-3  &0.15E-3    &0.74E-4   &0.37E-4\\
                            &1.18     &1.04       &1.01      &1.00     &1.00       &1.00      &--   \\ 
\cline{2-8}
\multirow{2}{*}{ $10^{-10}$}&0.17E-2  &0.70E-3    &0.33E-3   &0.16E-3  &0.81E-4    &0.41E-4   &0.20E-4\\
                            &1.32     &1.08       &1.02      &1.00     &1.00       &1.00      &--   \\ 
\cline{1-8}{}
\end{tabular*}
\label{table:2}
\end{table}
\begin{table}[h]
\caption{$\Vert u-u^N\Vert_E$ in the case of $\varepsilon_2=10^{-8}$ and $k=1$}
\footnotesize
\begin{tabular*}{\textwidth}{@{}@{\extracolsep{\fill}} c ccccccc @{}}
\cline{1-8}{}
    \multirow{2}{*}{ $\varepsilon_1$ }&\multicolumn{7}{c}{$N$ }\\ 
\cline{2-8}                 &8        &16         &32        &64       &128        &256       &512       \\
\cline{1-8}
\multirow{2}{*}{ $1$ }      &0.22E-2  &0.11E-2    &0.55E-3   &0.28E-3  &014E-3    &0.69E-4   &0.35E-4\\
                            &1.01     &1.00       &1.00      &1.00     &1.00       &1.00      &--   \\
\cline{2-8}
\multirow{2}{*}{ $10^{-2}$ }&0.33E-1  &0.17E-1    &0.84E-2   &0.42E-2  &0.21E-2    &0.11E-2   &0.53E-3\\
                            &1.00    &1.00       &1.00      &1.00     &1.00       &1.00      &--   \\
\cline{2-8}
\multirow{2}{*}{ $10^{-4}$ }&0.26E-1  &0.12E-1    &0.58E-2   &0.29E-2  &0.14E-2    &0.72E-3   &0.36E-3\\
                            &1.13     &1.03       &1.01      &1.00     &1.00       &1.00      &--   \\
\cline{2-8}
\multirow{2}{*}{ $10^{-6}$ }&0.83E-2  &0.38E-2    &0.19E-2   &0.93E-3  &0.47E-3    &0.23E-3   &0.12E-3\\
                            &1.13     &1.03       &1.01      &1.00     &1.00       &1.00      &--   \\    
\cline{2-8}
\multirow{2}{*}{ $10^{-8}$ }&0.26E-2  &0.12E-2    &0.59E-3   &0.29E-3  &0.15E-3    &0.74E-4   &0.37E-4\\
                            &1.13     &1.03       &1.01      &1.00     &1.00       &1.00      &--   \\ 
\cline{2-8}
\multirow{2}{*}{ $10^{-10}$}&0.84E-3  &0.38E-3    &0.19E-3   &0.93E-4  &0.47E-4    &0.23E-4   &0.12E-4\\
                            &1.13     &1.03       &1.01      &1.00     &1.00       &1.00      &--   \\
\cline{1-8}{}
\end{tabular*}
\label{table:3}
\end{table}

\begin{figure}[h]
\begin{minipage}[t]{0.45\linewidth}
\centering
\includegraphics[width=2.35in,height=2.0in]{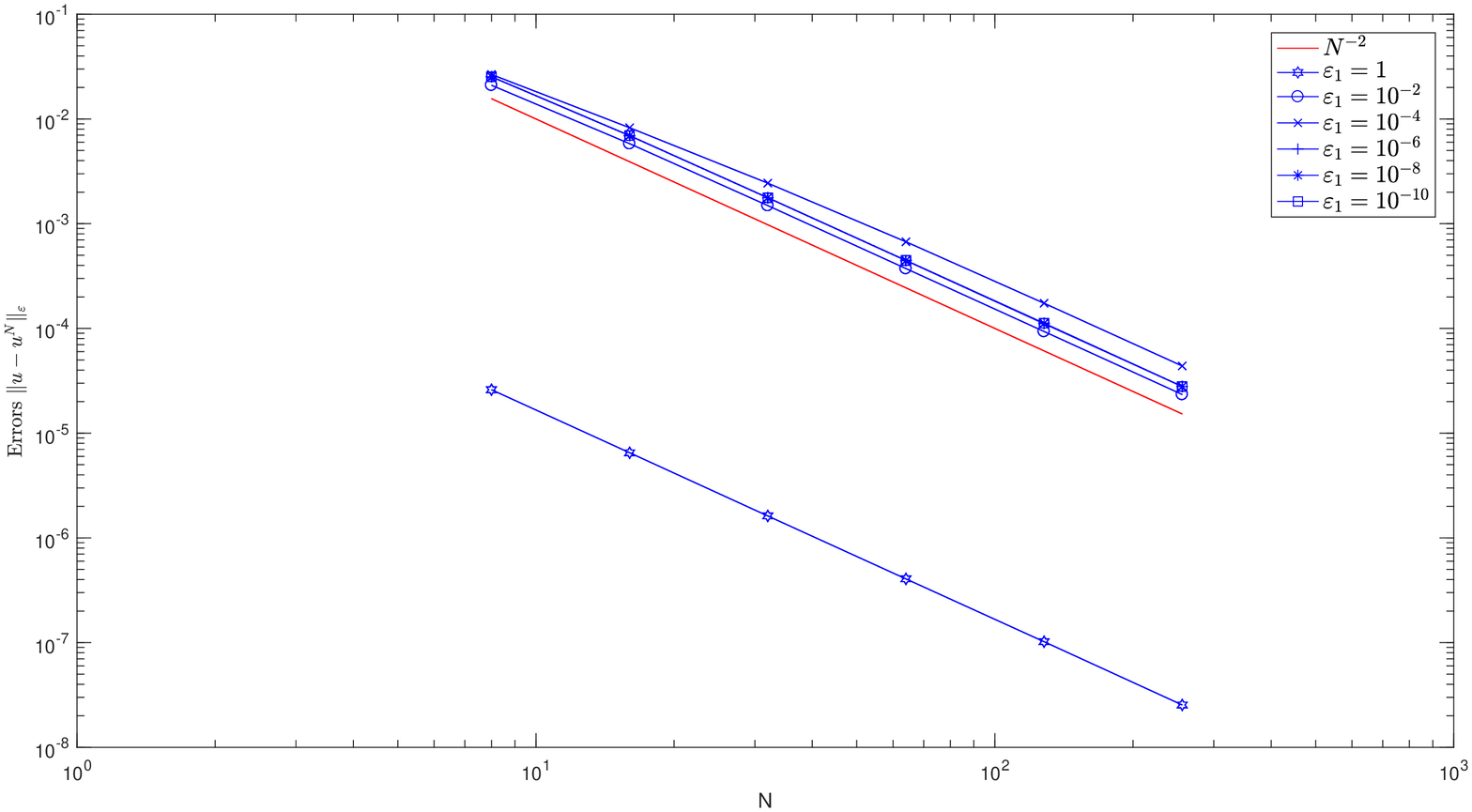}
\caption{ $k=2,\varepsilon_2=1$.}
\label{fig:1}
\end{minipage}
\begin{minipage}[t]{0.45\linewidth}
\centering
\includegraphics[width=2.35in,height=2.0in]{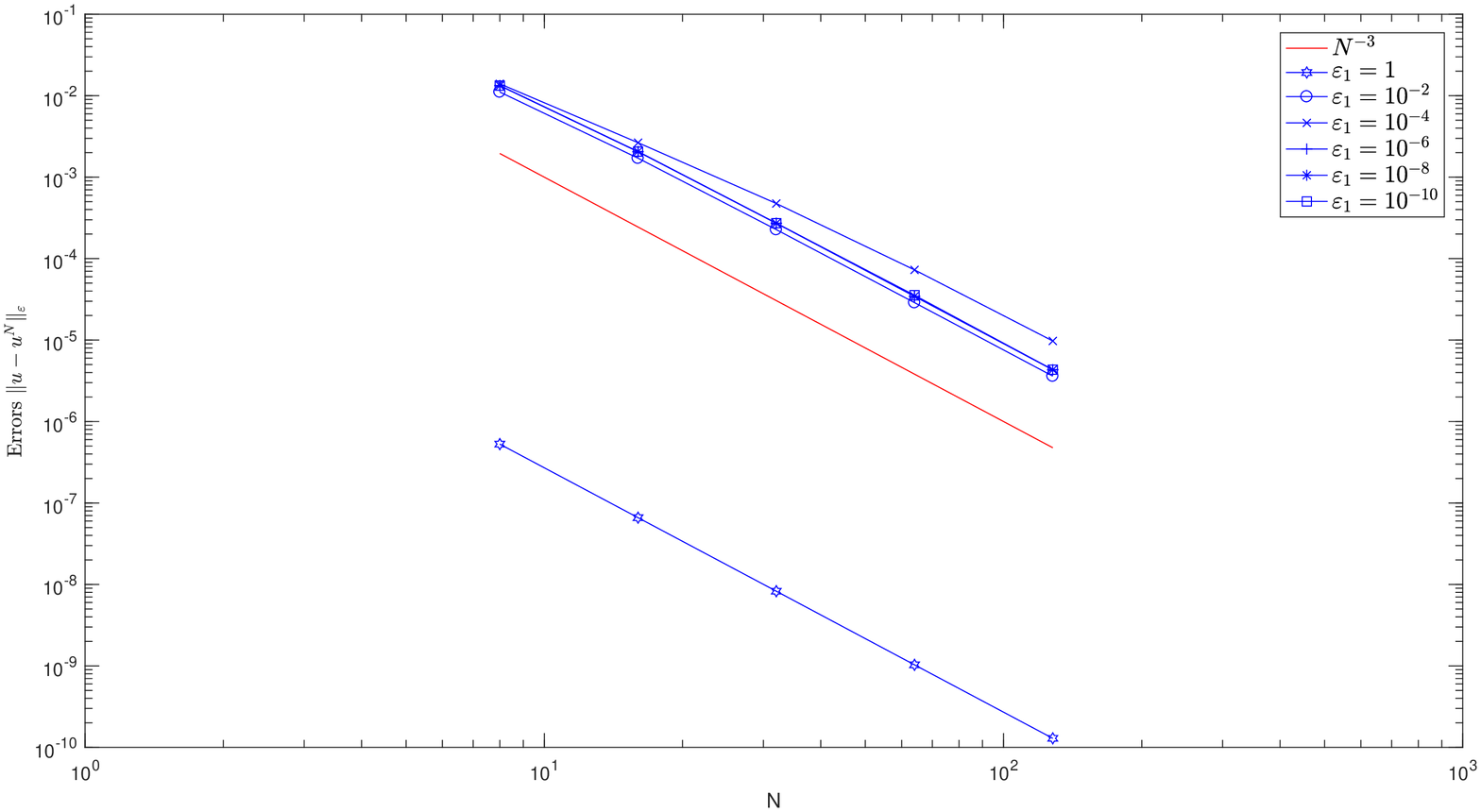}
\caption{  $k=3,\varepsilon_2=1$.}
\label{fig:2}
\end{minipage}
\end{figure}

\begin{figure}[h]
\begin{minipage}[t]{0.45\linewidth}
\centering
\includegraphics[width=2.35in,height=2.0in]{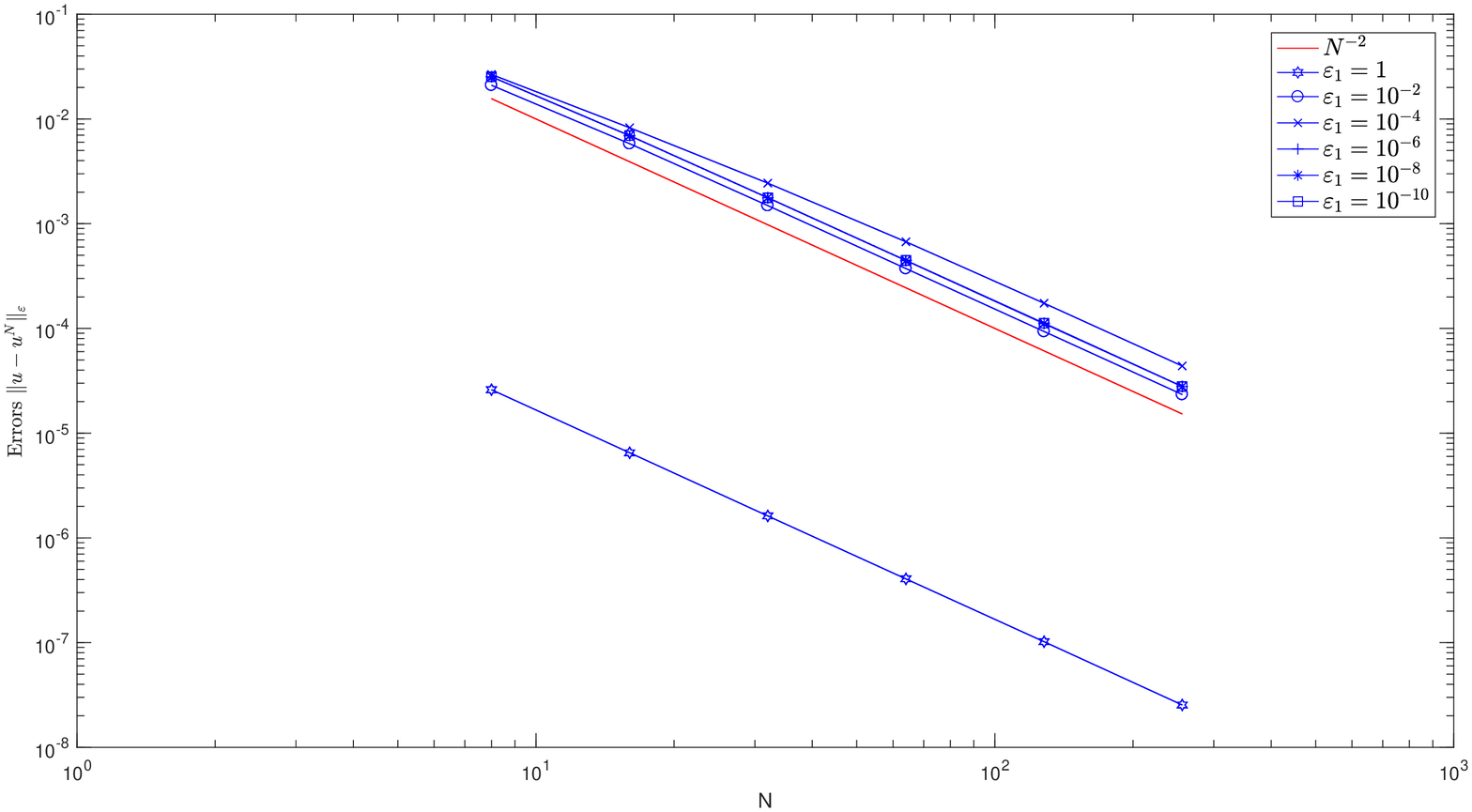}
\caption{ $k=2,\varepsilon_2=10^{-4}$.}
\label{fig:3}
\end{minipage}
\begin{minipage}[t]{0.45\linewidth}
\centering
\includegraphics[width=2.35in,height=2.0in]{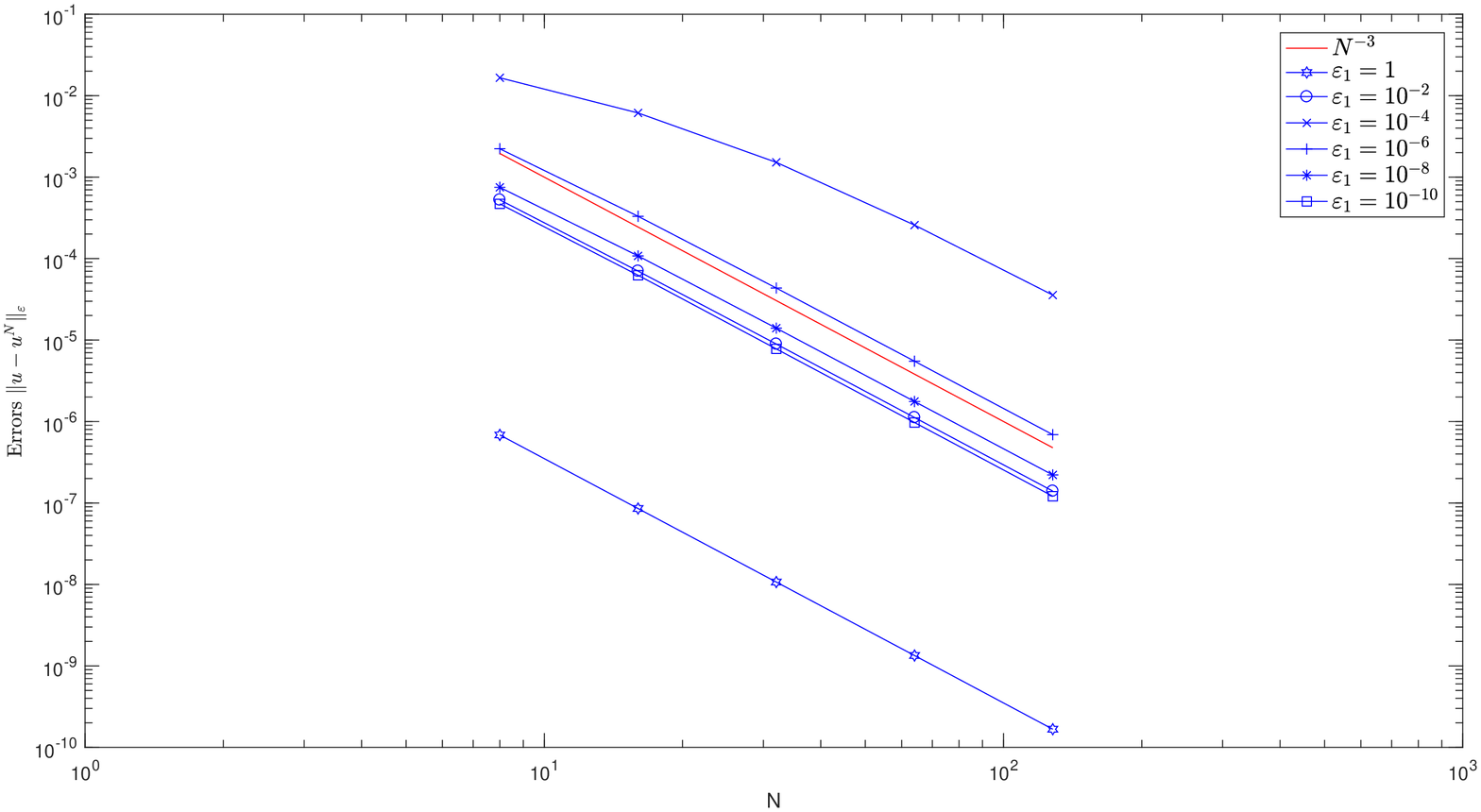}
\caption{  $k=3,\varepsilon_2=10^{-4}$.}
\label{fig:4}
\end{minipage}
\end{figure}

\begin{figure}[h]
\begin{minipage}[t]{0.45\linewidth}
\centering
\includegraphics[width=2.35in,height=2.0in]{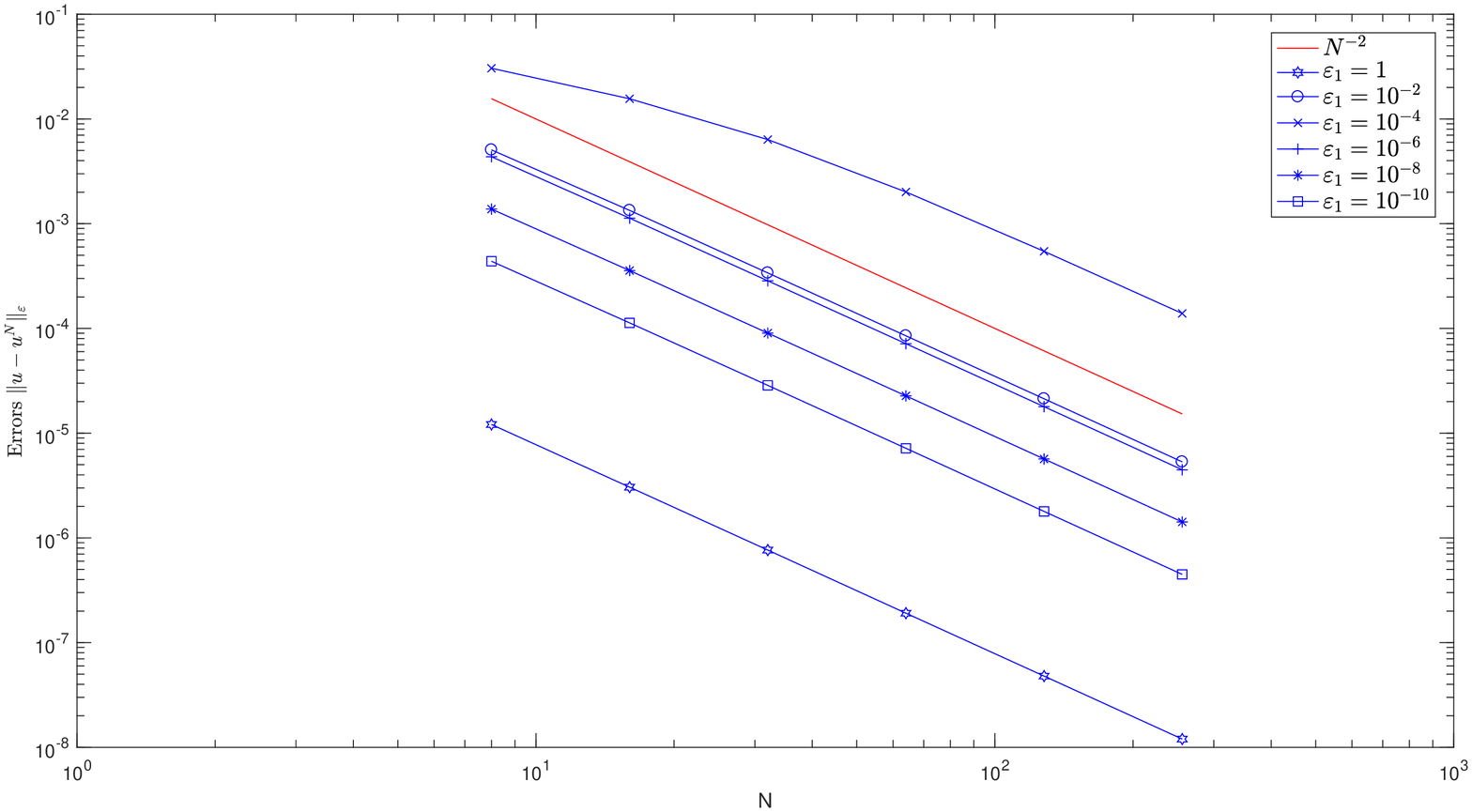}
\caption{ $k=2,\varepsilon_2=10^{-8}$.}
\label{fig:5}
\end{minipage}
\begin{minipage}[t]{0.45\linewidth}
\centering
\includegraphics[width=2.35in,height=2.0in]{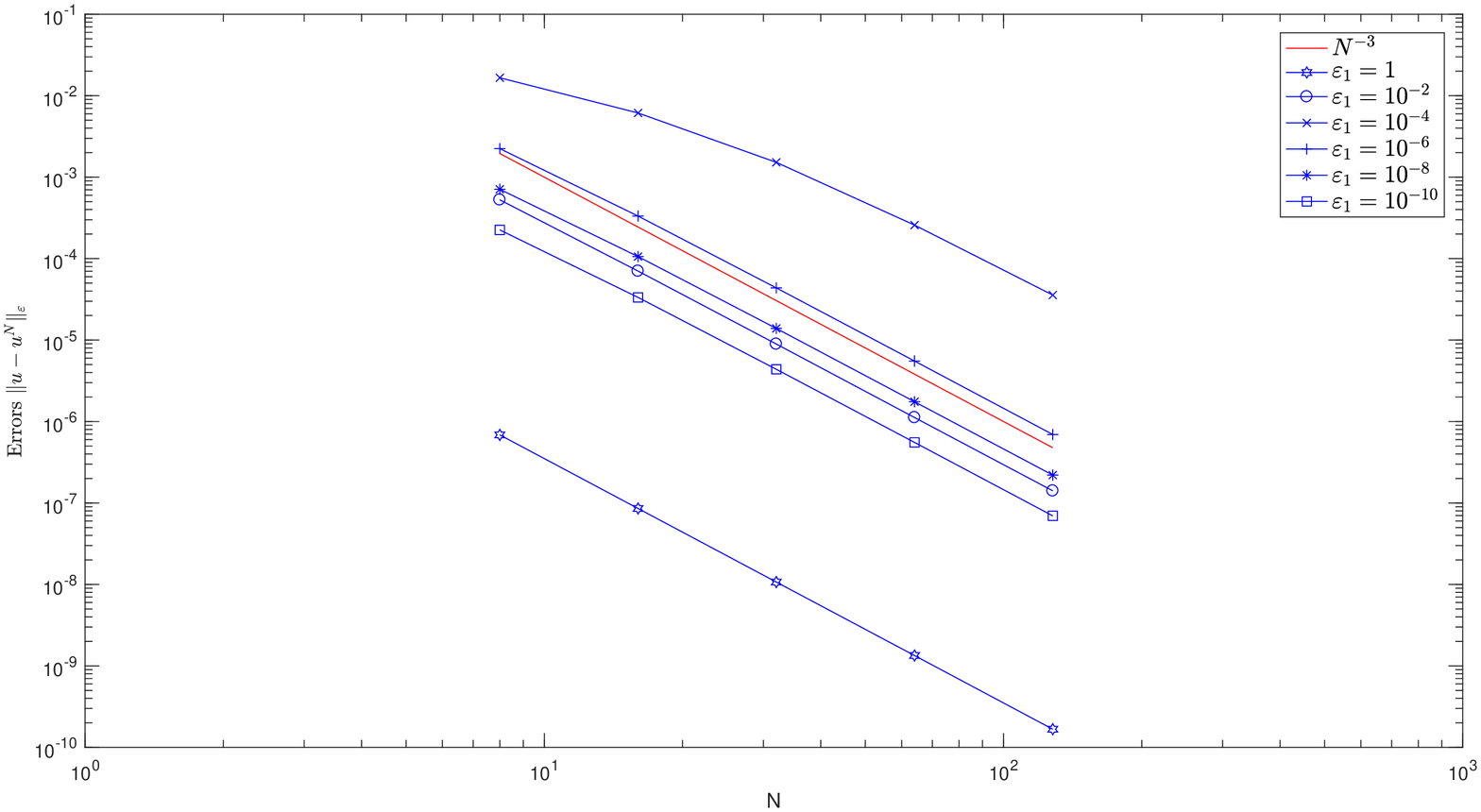}
\caption{$k=3,\varepsilon_2=10^{-8}$.}
\label{fig:6}
\end{minipage}
\end{figure}


\end{document}